\definecolor{red}{rgb}{1,0,0}
\def\red{\color{red}}
\definecolor{green}{rgb}{0.0,0.8,0.0}
\def\gree{\color{green}}
\definecolor{blue}{rgb}{0,0,1}
\def\blu{\color{blue}}
\definecolor{black}{rgb}{0,0,0}
\definecolor{grey}{rgb}{0.333,0.333,0.333}
\definecolor{lightgrey}{rgb}{0.666,0.666,0.666}
\definecolor{white}{rgb}{1,1,1}
\newtheorem{thm}{Theorem}%
\newtheorem{lem}{Lemma}
\newtheorem{prop}{Proposition}
\newtheorem{cor}{Corollary}
\newtheorem*{thm-non}{Theorem}
\theoremstyle{definition}
\newtheorem{defn}{Definition}[section]
\theoremstyle{remark}
\newtheorem{remark}{Remark}[section] %
\newtheoremstyle{customthm}
{}
{}
{}
{}
{\bfseries}
{.}
{.5em}
{}
\theoremstyle{customthm}
\providecommand{\customgenericname}{}
\newcommand{\newcustomtheorem}[2]{
  \newenvironment{#1}[1]
  {
    \renewcommand\customgenericname{#2}
    \renewcommand\theinnercustomgeneric{##1}
    \innercustomgeneric
  }
  {\endinnercustomgeneric}
}
\numberwithin{equation}{section}
\def\BB{{\mathbb B}}
\def\RR{{\mathbb R}}
\def\R{{\mathbb R}}
\def\ZZ{{\mathbb Z}}
\def\hatZZ{\widehat{\mathbb Z}}
\def\one{{\mathbbm{1}}}
\def\S{\operatorname{S{}}}
\def\scrA{{\mathcal A}}
\def\scrB{{\mathcal B}}
\def\scrC{{\mathcal C}}
\def\scrD{{\mathcal D}}
\def\scrE{{\mathcal E}}
\def\scrF{{\mathcal F}}
\def\scrG{{\mathcal G}}
\def\scrH{{\mathcal H}}
\def\scrK{{\mathcal K}}
\def\scrL{{\mathcal L}}
\def\scrM{{\mathcal M}}
\def\scrN{{\mathcal N}}
\def\scrP{{\mathcal P}}
\def\scrR{{\mathcal R}}
\def\scrU{{\mathcal U}}
\def\scrW{{\mathcal W}}
\def\scrX{{\mathcal X}}
\def\scrY{{\mathcal Y}}
\def\e{\mathrm{e}}
\def\diag{\operatorname{diag}}
\def\B{\operatorname{B{}}}
\def\L{\operatorname{L{}}}
\def\GL{\operatorname{GL}}
\def\SL{\operatorname{SL}}
\def\ASL{\operatorname{ASL}}
\def\OOO{\operatorname{O{}}}
\def\supp{\operatorname{supp}}
\def\vol{\operatorname{vol}}
\def\GamG{\Gamma\backslash G}
\def\GamGG{\Gamma_0\backslash G_0}
\def\GamH{\Gamma_H\backslash H}
\def\trans{\,^\mathrm{t}\!}
\newcommand{\bs}{\backslash}
\newcommand{\ve}{\varepsilon}
\newcommand{\bD}{\bar{D}}
\newcommand{\bU}{\bar{U}}
\newcommand{\bu}{\bar{u}}
\newcommand{\tg}{\tilde{g}}
\newcommand{\ts}{\tilde{s}}
\newcommand{\tv}{\tilde{v}}
\newcommand{\tbeta}{\tilde{\beta}}
\newcommand{\tscrW}{\widetilde{\scrW}}
\title[Extreme events for unipotent actions]{Extreme events and impact statistics for unipotent actions on the space of lattices}
\author[Jens Marklof]{Jens Marklof}
\address{Jens Marklof, School of Mathematics, University of Bristol, Bristol BS8 1UG, U.K.\newline \rule[0ex]{0ex}{0ex} \hspace{8pt}{\tt j.marklof@bristol.ac.uk}}
\author[Andreas Str\"ombergsson]{Andreas Str\"ombergsson}
\address{Andreas Str\"ombergsson, Department of Mathematics, Uppsala University, Box 480, SE-75106, Uppsala, Sweden
\newline \rule[0ex]{0ex}{0ex} \hspace{8pt}{\tt astrombe@math.uu.se}}
\author[Shucheng Yu]{Shucheng Yu}
\address{Shucheng Yu, School of Mathematical Sciences, University of Science and Technology of China (USTC), 230026, Hefei, China
\newline \rule[0ex]{0ex}{0ex} \hspace{8pt}{\tt yusc@ustc.edu.cn}}
\date{13 October 2025}
\thanks{JM's research was supported by EPSRC grant EP/W007010/1.   
AS was supported by the Knut and Alice Wallenberg Foundation and also by the Swedish Research Council Grant 2023-03411.
SY was supported by the
National Key R\&D Program of China No. 2024YFA1015100.
MSC (2020): 37D40, 11B57}
\begin{document}

\begin{abstract}
This paper extends a recent extreme value law for horocycle flows on the space of two-dimensional lattices, due to Kirsebom and Mallahi-Karai, to the simplest examples of rank-$k$ unipotent actions on the space of $n$-dimensional lattices. We analyse the problem in terms of the hitting time and impact statistics for the unipotent action with respect to a shrinking surface of section, following the strategy of Pollicott and the first named author in the case of hyperbolic surfaces. 
If $k=n-1$, the limit law is given by directional statistics of Euclidean lattices, whilst for $k<n-1$ we observe new distributions for which we derive precise tail asymptotics. 
\end{abstract}

\maketitle

\tableofcontents

\section{Introduction}\label{secIntro}

Let $G=\SL(n,\RR)$, $\Gamma= \SL(n,\ZZ)$, $\scrX=\GamG$, and  let $\mu$ be the Haar measure on $G$, normalized so that it represents the unique right $G$-invariant probability measure on $\scrX$. 
It is explicitly given by
\begin{equation} \label{Minkowski}
d\mu(g) \frac{dt}t = \big(\zeta(2)\zeta(3)\cdots\zeta(n)\big)^{-1}\;\det (X)^{-n}
\prod_{i,j=1}^n dX_{ij},
\end{equation}
where $X=(X_{ij})=t^{1/n}g \in \GL^+(n,\RR)$ with $g\in G$, $t>0$. We will also use the notation $\mu_0$ for the right $G_0$-invariant probability measure on $\GamGG$, with $G_0=\SL(n-1,\RR)$ and $\Gamma_0=\SL(n-1,\ZZ)$.

We define the following rank-$k$ unipotent action $h$ on $\scrX$,
\begin{equation}\label{equ:unipotentact}
\RR^k\times \scrX \to\scrX, \qquad 
(s,x) \mapsto h_s(x) = x U(s), \qquad U(s)=\begin{pmatrix} 1_k & 0 & 0 \\ 0 & 1_m & 0 \\ -s & 0 & 1 \end{pmatrix} ,
\end{equation}
with $k+m+1=n$. In the case $m=0$, we interpret this as $U(s)=\begin{pmatrix} 1_{n-1} & 0 \\  -s & 1 \end{pmatrix}$. It would be interesting to see how the present study can be extended to more general unipotent actions. For actions that are conjugate to $U(s)$, see Remark \ref{rem:conj} below.

Let $d: G\times G\to \RR_{\geq 0}$ be a left $G$-invariant Riemannian metric on $G$, i.e., $d(h g, hg')=d(g,g')$ for all $h,g,g'\in G$. 
This metric on $G$ defines a Riemannian metric $d_\Gamma(\,\cdot\,,\,\cdot\,)$ on $\scrX$ via
\begin{equation}\label{m1}
	d_\Gamma(x,x'):=\min_{\gamma\in\Gamma} d(g,\gamma g')  \qquad (x=\Gamma g,\; x'=\Gamma g').
\end{equation}
This is well-defined since $d(\,\cdot\,,\,\cdot\,)$ is left $G$-invariant. 

Fix an arbitrary norm $\|\cdot\|$ on $\RR^n$.
Define
\begin{equation}\label{equ:alpha}
\alpha(x) = \max_{v \in\ZZ^n g\setminus\{0\}} \frac{1}{\| v\|} ,
\end{equation}
with $g$ such that $x=\Gamma g$. The log of this function gives a natural way of measuring the distance from the ``origin'' $o=\Gamma\simeq \ZZ^n$ to $x=\Gamma g\simeq \ZZ^n g$. In fact, there is $c>1$ such that for $x$ sufficiently far from $o$,
\begin{equation}\label{dGammalogalphacompare}
c^{-1} d_\Gamma(x,o) \leq  \log \alpha(x) \leq c d_\Gamma(x,o)  .
\end{equation}
This inequality is the metric variant of Mahler's compactness criterion.

Fix an arbitrary norm $|\cdot|$ on $\RR^k$. The dynamical logarithm law in this context asserts that, for $\mu$-almost every $x$,
\begin{equation}\label{uloglaw}
\limsup_{T\to\infty}\sup_{|s|\leq T}  \frac{\log\alpha(h_s(x))}{\log T}  = \frac{k}{n} .
\end{equation}
This was proved for $k=1$ by Athreya and Margulis \cite{AthreyaMargulis,AthreyaMargulis2}; see Remark \ref{rmk:loglaw} below for $k\geq 1$ and \cite{Kelmer12,Kelmer19,Yu17} for more general unipotent actions.

The following theorem gives the fluctuations around the limit in \eqref{uloglaw}. Note that the limit distribution below will depend on the choice of the norms $|\,\cdot\,|$ and $\|\,\cdot\,\|$.

\begin{thm}\label{mainthm}
Let $\lambda$ be a Borel probability measure that is absolutely continuous with respect to $\mu$. Then, for any $Y\in\RR$, 
\begin{equation}\label{maineq}
\lim_{T \to \infty} \lambda\left\{ x\in\scrX :  \sup_{|s|\leq T}  \log\alpha(h_s(x))  >  Y + \frac{k}{n}\, \log T  \right\} 
=   \int_Y^\infty \eta(y) dy ,
\end{equation}
where $\eta\in\L^1(\RR)$ is a probability density with tail bounds
\begin{equation}\label{asytail0}
\int_Y^\infty \eta(y) dy = \kappa\, e^{-n Y} + O(e^{-2n Y}),\qquad\text{as }\: Y\to \infty,
\end{equation}
\begin{equation}\label{asytail1}
\int_{-\infty}^Y \eta(y) dy \asymp e^{-\frac{n(n-1)}k|Y|}
\cdot\begin{cases}
1&\text{if }\:k\geq2
\\
|Y|^m&\text{if }\:k=1,
\end{cases}
\qquad\text{as }\: Y\to-\infty. 
\end{equation}
The constant $\kappa>0$ in \eqref{asytail0} is given by
\begin{align}\label{equ:kappa}
\kappa=\frac{\vol_{\RR^k}(\B_1^k)}{\zeta(n)}\int_{\mathrm{pr}(\BB^{n,+}_1)}y_n^k\,dy_{k+1}\cdots dy_n,
\end{align}
where for any  $X>0$, $\B_X^k\subset \RR^k$ and $\BB_X^n\subset \RR^n$ are the closed balls centered at the origin with radius $X$ with respect to $|\cdot|$ and $\|\cdot\|$ respectively, 
$$
\BB_1^{n,+}:=\left\{y\in \BB_1^n: y_n>0\right\},
$$ 
and $\mathrm{pr}: \RR^n\to \RR^{m+1}$ is the projection map sending $ (x_1,\cdots, x_n)$ to $(x_{k+1},\cdots, x_n)$. 
\end{thm}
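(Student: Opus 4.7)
The strategy is to reformulate the extreme-value event as the existence of a lattice vector in a shrinking region, rescale it to a $T$-independent set $B_Y\subset\scrX$ via a one-parameter diagonal flow, and compute the limit by mixing.

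\emph{Reformulation and rescaling.} The event is equivalent to the existence of $m\in\ZZ^n g\setminus\{0\}$ and $s\in\B_T^k$ with $\|mU(s)\|<e^{-Y}T^{-k/n}$. Since $mU(s)=(m_{1:k}-m_n s,\,m_{k+1},\ldots,m_n)$, for $m_n\neq0$ the minimum $\min_{s\in\RR^k}\|mU(s)\|$ equals $\rho(m):=\inf_{x\in\RR^k}\|(x,m_{k+1},\ldots,m_n)\|$ and is attained at some $s^\star(m)\in\RR^k$. A direct volume estimate (bulk $\sim e^{-nY}$ versus boundary annulus $\sim T^{-1}e^{-nY}$) shows that $m$ with $s^\star(m)\notin\B_T^k$ contribute only $O(1/T)$ of the main term, so the event reduces in the limit to: $\exists\,m\in\ZZ^n g\setminus\{0\}$ with $s^\star(m)\in\B_T^k$ and $\rho(m)<e^{-Y}T^{-k/n}$. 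Now introduce the one-parameter diagonal subgroup
\[
a_t:=\diag\!\bigl(e^{-t(n-k)/n}\,\one_k,\;e^{tk/n}\,\one_{n-k}\bigr)\in G
\]
and set $\tilde m:=m\,a_{\log T}\in L_y$ with $y:=x\,a_{\log T}\in\scrX$. By homogeneity of $\|\cdot\|$ one has $\rho(\tilde m)=T^{k/n}\rho(m)$, and a brief calculation shows the condition $|s^\star(m)|\leq T$ becomes, as $T\to\infty$, $|\tilde m_{1:k}/\tilde m_n|\leq 1$. The event thus corresponds to $y\in B_Y$, where
\[
B_Y:=\bigl\{y\in\scrX:\exists\,\tilde m\in L_y\setminus\{0\},\;|\tilde m_{1:k}/\tilde m_n|\leq 1,\;\rho(\tilde m)<e^{-Y}\bigr\}
\]
is $T$-independent.

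\emph{Mixing and upper tail.} Since $\{a_t\}$ is a non-trivial one-parameter diagonal subgroup of $G$, Howe--Moore gives mixing on $(\scrX,\mu)$, hence $(a_{\log T})_*\lambda\to\mu$ weakly for any $\lambda\ll\mu$. Any $\tilde m$ contributing to $B_Y$ satisfies $\|\tilde m\|\lesssim e^{-Y}$, so $B_Y$ is determined by finitely many lattice vectors, and a Fubini-in-$Y$ argument gives $\mu(\partial B_Y)=0$ for a.e.\ $Y$; monotonicity and continuity of $F(Y):=\mu(B_Y)$ extend convergence to every $Y$, and the absolute continuity of $F$ yields $\eta=-F'$, proving \eqref{maineq}. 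For \eqref{asytail0}, the expected number of primitive antipodal pairs $\pm\tilde m$ contributing to $B_Y$ is $V_Y/(2\zeta(n))$ by Siegel's integration formula, where a slicing in $(m_{1:k})\in\B_{|m_n|}^k$ followed by rescaling yields
\[
V_Y=\vol_{\RR^k}(\B_1^k)\,e^{-nY}\int_{\mathrm{pr}(\BB_1^n)}|y_n|^k\,dy_{k+1}\cdots dy_n=2\zeta(n)\,\kappa\,e^{-nY}.
\]
Since events are rare at large $Y$, $F(Y)=\kappa e^{-nY}+O(e^{-2nY})$, with the error controlled by Rogers' second-moment formula for the Siegel transform.

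\emph{Lower tail and main obstacle.} For $Y\to-\infty$ the complement $\{y\notin B_Y\}$ requires $L_y$ to have no nonzero vector in a large cone-shaped region of volume $\sim e^{n|Y|}$, pushing $L_y$ into a specific cusp of $\scrX$ aligned with $\ker\mathrm{pr}=\RR^k\oplus\{0\}^{n-k}$. I would parametrize this cusp using the Iwasawa decomposition adapted to the maximal parabolic $P\subset G$ stabilizing $\ker\mathrm{pr}$ and compute the $\mu$-measure of the neighborhood of lattices whose short vectors all lie outside the cone; the exponent $n(n-1)/k$ in \eqref{asytail1} should arise from the decay rate of this cusp measure along the appropriate central direction of the Levi of $P$, and the logarithmic factor $|Y|^m$ at $k=1$ from a marginal integration in a single codimension-one horospherical direction (a resonance absent for $k\geq 2$, where the relevant horospherical dimension becomes negligible against the leading exponential). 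This cusp-volume computation, with matching upper and lower bounds giving the precise exponent and logarithmic correction, is the main technical obstacle of the proof.
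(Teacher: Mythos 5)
Your proposal takes the ``direct'' route which the paper explicitly attributes to Kirsebom and Mallahi-Karai and notes as a viable alternative to their surface-of-section detour: you reformulate the event in terms of lattice-point avoidance, pull back by the diagonal flow $a_t$ (which, up to reparametrization $a_t=\Phi(-t/n)$, is the same one-parameter subgroup $\Phi$ the paper uses), and invoke mixing for $\lambda\ll\mu$. That part of the argument is correct in outline, though you are brisk about the $O(1/T)$ boundary corrections and about checking $\mu(\partial B_Y)=0$; the paper's Theorem 2$'$ proof shows how to handle the latter carefully via Siegel's formula applied to $\partial\scrC(\B_X^k,\scrW)$. Your upper-tail argument (Siegel's formula for the main term, Rogers's second-moment formula for the $O(e^{-2nY})$ error) matches the paper's proof for $n\geq3$, but you should note that for $n=2$ Rogers's formula is not directly applicable in this form and a separate argument is needed (the paper uses the fact that a unimodular planar lattice cannot have two linearly independent points in the open unit disk).

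The genuine gap is the lower tail \eqref{asytail1}. You acknowledge this is ``the main technical obstacle,'' but what you offer is a plan, not a proof: you assert that the exponent $n(n-1)/k$ ``should arise from the decay rate of this cusp measure'' and that the $|Y|^m$ factor at $k=1$ reflects ``a resonance absent for $k\geq2$,'' with no computation backing either claim. In particular, nothing in your sketch explains why the log-correction has exponent exactly $m$, nor why it vanishes for $k\geq2$. The paper establishes this through Proposition~\ref{FkmestimatePROP}, proved by induction on $m$ using Str\"ombergsson's two-sided estimates (Propositions~2.8 and 2.9 of \cite{Strombergsson11}) that reduce the avoidance probability $F_{k,m}(R)$ to an integral over the unit sphere of avoidance probabilities for $(n-1)$-dimensional lattices; the $|Y|^m$ factor then appears concretely from the $m$-fold iteration of a marginal integral $\int_{R^{-n/k}}^1 w^{-k}\,dw$, which is logarithmic precisely when $k=1$. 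Your proposed Iwasawa-parametrization of the cusp could conceivably be made to work, but as written it does not constitute a proof of the two-sided bound with matching exponent and log-power, and absent that, \eqref{asytail1} remains unestablished.
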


\begin{remark}\label{rmk:loglaw}
We will explain in Section \ref{sec:exelog} that the logarithm law \eqref{uloglaw} follows, for all $k\geq 1$, from Theorem \ref{mainthm} by a standard probabilistic argument based on the Borel-Cantelli Lemma.
\end{remark}

\begin{remark}\label{rem:conj}
Theorem \ref{mainthm}  extends to conjugates of the above unipotent action $U(s)$. This is equivalent to changing the defining norm for the $\alpha$-function; see Section \ref{sec:conj} for more details.
\end{remark}

\begin{remark}
The asymptotic bounds \eqref{asytail0} and \eqref{asytail1} follow from an interpretation of the distribution $\int_Y^{\infty}\eta(y)dy$ as the probability of a random rank-$n$ lattice avoiding certain sets (depending on $Y$) in $\RR^n$.  For such probability functions there is a quite general upper bound due to Athreya and Margulis \cite[Theorem 2.2]{AthreyaMargulis}. When $m=0$ (so that $k=n-1$) their bound matches with \eqref{asytail1}, but when $m>0$, \eqref{asytail1} is sharper. 
\end{remark}


Kirsebom and Mallahi-Karai's strategy \cite{Kirsebom} for dimension $n=2$ and $\lambda=\mu$ generalises without much difficulty to establish the limit law \eqref{maineq} also in this higher rank setting. The aim of the present paper is to take a slight detour -- as in \cite{MP25} -- via the impact statistics of the unipotent action for a shrinking surface of section, which we will introduce in Section \ref{secSurface}. This is of interest in its own right and provides a natural geometric-dynamical interpretation of the extreme value law. 

We will see in Section \ref{hittingtimestatsSEC} that for $k=n-1$ the hitting time statistics has the same limit as the fine-scale statistics of Farey fractions \cite{Marklof13}, or equivalently the directional statistics of primitive Euclidean lattice points \cite{MS10}. The Farey statistics are in turn closely related to the distribution of (logs of) smallest denominators \cite{smalld1,smalld2}, and for $n=2$ the distribution  coincides (up to simple scaling) with the the extreme value law of \cite{Kirsebom,MP25}. For $k<n-1$ the hitting time statistics are new. 

The surface of section we will use here appeared first in \cite{Athreya12,Marklof10,MS10} in the special case $k=n-1$. Particularly relevant to the ideas presented here is Tseng's paper \cite{Tseng23} on the distribution of closed horospheres (again with $k=n-1$) in shrinking cuspidal neighbourhoods. Tseng's setting corresponds to sequences of measures $\lambda$ supported on expanding closed horospheres rather than a fixed, absolutely continuous measure. We explain in Section \ref{secMore} that Theorem \ref{mainthm} also holds for such singular sequences of measures as long as they equidistribute sufficiently rapidly under the diagonal action $\varphi_t$ (for $t\to-\infty$).

In Section \ref{impactstatSEC} we consider the joint hitting time and impact distribution, which yields the entry time statistics for cuspidal neighbourhoods (Section \ref{entrytimessec}) and finally the extreme value law (Section \ref{secX}). The tail asymptotics of the hitting time statistics are derived in Section \ref{tailSEC}.  

Our findings for the unipotent action $U(s)$ (and its conjugates) are in stark contrast with the setting of hyperbolic dynamical systems, where the hitting time statistics have Poissonian limits, and the extreme value laws are given by the Gumbel distribution; see \cite{DolgopyatFayad2020,DolgopyatFayadLiu2022,FFT,Pollicott2009} for details and and further references.

\section{Surface of section} \label{secSurface}

Consider the subgroups
\begin{equation}\label{Hdef}
	H = \left\{ g\in G :  (0,1)g =(0,1) \right\}= \left\{ \begin{pmatrix} A  & \trans b \\  0 & 1 \end{pmatrix} : A\in G_0,\; b\in\RR^{n-1} \right\}
\end{equation}
and
\begin{equation}
	\Gamma_H = \Gamma\cap H  = \left\{ \begin{pmatrix} \gamma & \trans m \\ 0 & 1 \end{pmatrix} : \gamma\in\Gamma_0,\; m\in\ZZ^{n-1} \right\} .
\end{equation}
Here the notation $(0,1)$ is a shorthand for the vector $(0,\cdots,0,1)\in \RR^n$.
Note that $H$ and $\Gamma_H$ are isomorphic to the affine special linear groups $\ASL(n-1,\RR)= G_0\ltimes\RR^{n-1}$ and $\ASL(n-1,\ZZ)=\Gamma_0\ltimes\ZZ^{n-1}$, respectively. We normalize the Haar measure $\mu_H$ of $H$ so that it becomes a probability measure on $\GamH$; explicitly:
\begin{equation} \label{siegel2}
d\mu_H(g) = d\mu_0(A)\, db, \qquad g=\begin{pmatrix} A & \trans b \\ 0 & 1 \end{pmatrix}.
\end{equation}
Given a measurable map $M:\RR^n\setminus\{0\} \to G$, $y \mapsto M(y)\in G$ such that $(0,1) M(y)=y$, the map
\begin{equation}\label{My0}
	H \times \RR^n\setminus\{0\} \to  G, \qquad 
	(h, y) \mapsto hM(y), 
\end{equation}
provides a parametrization of $G$, with the Haar measure given by
\begin{equation}\label{Hahaar}
	d\mu = \zeta(n)^{-1} d\mu_H \, dy ;
\end{equation}
see \cite{Marklof10} and \cite[Section 7]{MS10} for further details. 

For any $L>0$, the set
\begin{equation}
\scrH(L) = \Gamma\backslash\Gamma H \big\{ D(y_n) : y_n\in(0, L^{-1}] \big\}, \qquad 
D(y_n)= \begin{pmatrix} y_n^{-1/(n-1)} 1_{n-1} & \trans 0 \\  0 & y_n\end{pmatrix}, 
\end{equation}
is a closed embedded submanifold of $\scrX$ \cite[Lemma 2]{Marklof10} and is transversal to the full (i.e., rank $k=n-1$) horospherical action; see proof of Theorem 6 in \cite{Marklof10}. 

Let 
\begin{align}\label{Rwdef}
Q:=\left\{ R(w)= \begin{pmatrix} w_{m+1}^{-(m+1)/k} 1_k & 0 & 0 \\ 0  &  w_{m+1} 1_m & 0 \\  0 & w'  & w_{m+1} \end{pmatrix} :
\quad w=(w',w_{m+1})\in \RR^m\times \RR_{>0}\right\};
\end{align}
this is a closed subgroup of $G$.
The corresponding section $\scrH(\scrW,L)$ for the $\RR^k$ action with $k<n-1$, is defined, for a given bounded Borel subset $\scrW \subset \RR^m\times\RR_{>0}$ with boundary of zero Lebesgue measure and non-empty interior, by
\begin{equation}\label{sect}
\scrH(\scrW,L) = \Gamma\backslash\Gamma H \big\{ R(w) :  w \in L^{-1}\scrW \big\} .
\end{equation}
This section is transversal to the orbits of the unipotent action but, unlike in the full rank case, $\scrH(\scrW,L)$ will have self-intersections. 

A key feature of the section $\scrH(\scrW,L)$ is its simple scaling under the following diagonal right action on $\scrX$, 
\begin{equation}\label{equ:phit}
\varphi_t(x) = x\Phi(t),\qquad \Phi(t)=  \begin{pmatrix} \e^{(m+1) t} 1_k  & 0 & 0 \\ 0 & \e^{-k t} 1_m & 0 \\ 0 & 0 & \e^{-k t} 
\end{pmatrix} .
\end{equation}
We have the relations
\begin{equation}\label{commute}
U(s) \Phi(t)  = \Phi(t) U(\e^{n t}s),\qquad 
R(w) \Phi(t)  = R(\e^{-k t} w)  ,
\end{equation}
which in turn imply that
\begin{equation}\label{secscale}
\varphi_t \scrH(\scrW,L)  = \scrH(\scrW,L\e^{kt}) , \qquad \varphi_t \circ h_s = h_{s\e^{n t}} \circ \varphi_t.
\end{equation}

Let $\scrF_H$ be a fixed fundamental domain of the $\Gamma_H$ action on $H$.
We will parameterise the section $\scrH(\scrW,L)$ by the set 
\begin{equation}\label{SigmaWLdef}
\Sigma(\scrW,L) = \scrF_H \big\{ R(w) :  w \in L^{-1}\scrW\big\} \subset G,
\end{equation}
via the map
\begin{equation}
\pi: \Sigma(\scrW,L) \to \scrX, \qquad g \mapsto \Gamma g .
\end{equation}
We have  an $r$-fold self-intersection at the point $x\in\scrH(\scrW,L)$, if the pre-image $\pi^{-1}(\{x\})$ (for the projection map $\pi:g\mapsto\Gamma g$) has $r$ distinct points.

Any element in $G_+:=\left\{g=(g_{ij})\in G: g_{nn}>0\right\}$ can be written uniquely in the form $g=hR(w)U(s)$ with $h\in H$, $w\in \RR^m\times \RR_{>0}$ and $s\in \RR^k$.
In these coordinates, the Haar measure (restricted to $G_+$) is given by 
\begin{equation}\label{muasnuds}
d\mu(g) = d\nu(a) \, ds, \quad a\in HQ,\, s\in \RR^k,
\end{equation} 
where 
\begin{equation}\label{nuDEF}
d\nu(a) = \zeta(n)^{-1} d\mu_0(A)\,db\, w_{m+1}^k dw , \qquad a=\begin{pmatrix} A & \trans b \\ 0 & 1 \end{pmatrix}R(w).
\end{equation}
The measure $\nu$ defines a measure on the section $\Sigma(\scrW,L)$, and we have
\begin{equation}\label{nuLscaling}
\nu\left( \Sigma(\scrW,L) \right) = L^{-n} \zeta(n)^{-1} \int_\scrW w_{m+1}^k dw = L^{-n} \nu\left( \Sigma(\scrW,1) \right) .
\end{equation}

It will be convenient to also use the map 
\begin{equation}\label{paraL}
\pi_L: \Sigma(\scrW,1) \to \scrX, \qquad g \mapsto \varphi_{\log L^{1/k}} (\Gamma g) 
\end{equation}
to parametrise $\scrH(\scrW,L)$. As noted above for $\pi$, this map is not always one to one.

\section{Hitting time statistics}
\label{hittingtimestatsSEC}

For given initial data $x\in\scrX$, we are now interested in the sequence of hitting times $\xi_1(x,L),\xi_2(x,L),\dots$, i.e., the times $\xi_j(x,L)=s\in\RR^k$ for which $h_s(x)\in \scrH(\scrW,L)$. We order them with respect to the norm $|\cdot|$ we have fixed before:  
\begin{equation}\label{xijdefordering}
0\leq |\xi_1(x,L)| \leq |\xi_2(x,L)| \leq \cdots.
\end{equation}
In case of equalities in \eqref{xijdefordering}, the ordering is chosen in an arbitrary fashion.
We will use the convention that we list hitting times with the multiplicity of the self-intersection in $\scrH(\scrW, L)$. That is, $\#\{ j : \xi_j(x,L)=s\} =\#\pi_L^{-1}(\{h_s(x)\})$. With this convention, we have for the number of hitting times in some bounded test set $\scrA\subset\RR^k$,
\begin{equation}\label{U1}
\#\left\{ j : \xi_j(x,L)\in\scrA  \right\} = \#\left( \Gamma g \cap \Sigma(\scrW,L) U(-\scrA)  \right), 
\end{equation}
where $g\in G$ is any fixed representative such that $x=\Gamma g$.
Up to possible reordering in the case of equalities in \eqref{xijdefordering}, the commutation relations \eqref{secscale} yield
\begin{equation}\label{secscale2}
\xi_j(x,L) = \e^{nt} \xi_{j}(\varphi_{-t}(x),L \e^{-kt}).
\end{equation}
Indeed, keeping our choices for $\xi_j(x,1)$ ($x\in \scrX$, $j\geq 1$), for any $L>0$ we may redefine $\xi_j(x,L)$ by
$$
\xi_j(x,L) = L^{n/k} \xi_{j}(\varphi_{-(\log L)/k}(x), 1).
$$
One then verifies that $\xi_1(x,L),\xi_2(x,L),\ldots$ indeed give all the hitting times $s\in\RR^k$ for which $h_s(x)\in\scrH(\scrW,L)$, counted with the correct multiplicity, and so that both \eqref{xijdefordering} and \eqref{secscale2} hold. 




Denote by $\hatZZ^n$ the set of primitive lattice points in $\ZZ^n$,  i.e.\ the integer vectors $m$ with
$\gcd(m_1,\ldots,m_n)=1$. 
Using the bijection
\begin{equation}\label{bij}
	\Gamma_H\backslash\Gamma \to \hatZZ^n, \qquad \Gamma_H \gamma \mapsto ( 0,1) \gamma ,
\end{equation}
we can write \eqref{U1} as
\begin{equation}\label{U2}
\begin{split}
\#\left\{ j : \xi_j(x,L) \in\scrA  \right\} 
& = \#\left( \hatZZ^n g \cap (0,1) R(L^{-1} \scrW) U(-\scrA) \right)\\
& = \#\left( \hatZZ^n g \cap \scrC(\scrA, L^{-1}\scrW) \right),
\end{split}
\end{equation}
where for any $\tscrW\subset\R^m\times\R_{>0}$ we define
\begin{equation}\label{scrC}
\scrC(\scrA,\tscrW)
=\left\{ y \in\RR^n :  (y_1,\ldots,y_k)\in y_n\scrA,\, (y_{k+1},\ldots, y_n) \in \tscrW  \right\} .
\end{equation}

\begin{thm}\label{thm2}
Let $\lambda$ be a Borel probability measure on $\scrX$ that is absolutely continuous with respect to $\mu$,
let $\scrA=\scrA_1\times\cdots\times\scrA_r$ 
where $\scrA_1,\ldots,\scrA_r$ are Borel subsets of $\RR^k$, and let $N\in\ZZ_{\geq 0}^r$.
Then
\begin{equation}\label{thm2res}
\lim_{L\to\infty}
\lambda\left(\left\{ x\in\scrX : \#\left\{ j  : L^{-n/k} \xi_j(x,L)\in\scrA_i \right\}  = N_i\, \forall i\right\}\right)  
= \Psi^{(r)}_N(\scrA) ,
\end{equation}
where
\begin{equation}\label{LD019}
\begin{split}
\Psi^{(r)}_N(\scrA) 
&= \mu\left(\left\{ x\in\scrX : \#\left\{ j : \xi_j(x,1)\in\scrA_i \right\}  = N_i \, \forall i\right\}\right) \\
&=\mu\left(\left\{ \Gamma g \in\scrX : \#\left( \hatZZ^n g \cap \scrC(\scrA_i,\scrW) \right) = N_i\, \forall i \right\} \right).
\end{split}
\end{equation}
\end{thm}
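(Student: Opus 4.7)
The plan is to use the scaling relation \eqref{secscale2} to reduce the theorem to a single equidistribution assertion for the diagonal flow $\varphi_t$ applied to an absolutely continuous measure, and then identify the limit via the lattice-point description in \eqref{U2}. Setting $t=(\log L)/k$ in \eqref{secscale2} gives $L^{-n/k}\xi_j(x,L)=\xi_j(\varphi_{-(\log L)/k}(x),1)$ for every $j$. Writing
\[
E := \bigl\{x\in\scrX : \#\{j : \xi_j(x,1)\in\scrA_i\}=N_i\text{ for all }i\bigr\},
\]
the left-hand side of \eqref{thm2res} becomes $\int_\scrX \one_E\,d\bigl((\varphi_{-(\log L)/k})_*\lambda\bigr)$. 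By the reformulation \eqref{U2} (applied to each $\scrA_i$ at level $L=1$), the set $E$ coincides with $\{\Gamma g\in\scrX : \#(\hatZZ^n g\cap\scrC(\scrA_i,\scrW))=N_i\text{ for all }i\}$, which immediately yields the equality between the two expressions for $\Psi^{(r)}_N(\scrA)$ in \eqref{LD019} once the limit is identified as $\mu(E)$.

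The diagonal flow $\varphi_t$ on $(\scrX,\mu)$ is mixing by the Howe--Moore theorem, since $\{\Phi(t)\}$ is an unbounded one-parameter diagonal subgroup of the semisimple group $G=\SL(n,\RR)$. Approximating the Radon--Nikodym density of $\lambda$ in $L^1(\mu)$ by bounded functions and applying mixing against $\one_E\in L^\infty(\mu)\subset L^2(\mu)$ then yields, for any $\mu$-continuity set $E$,
\[
\lim_{t\to\infty}\int_\scrX \one_E\,d\bigl((\varphi_{-t})_*\lambda\bigr)=\mu(E).
\]
Combined with the rescaling step above, this delivers \eqref{thm2res} with limit equal to $\mu(E)$, matching the first right-hand side in \eqref{LD019}.

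The remaining, and main, technical step is to verify that $\mu(\partial E)=0$, so that the above applies with $F=\one_E$. The counting function $\Gamma g\mapsto \#(\hatZZ^n g\cap\scrC(\scrA_i,\scrW))$ is continuous at every lattice $\Gamma g$ for which $\hatZZ^n g$ does not meet $\partial\scrC(\scrA_i,\scrW)$, so $\partial E$ is contained in
\[
\bigcup_{i=1}^r\bigl\{\Gamma g\in\scrX : \hatZZ^n g\cap\partial\scrC(\scrA_i,\scrW)\neq\emptyset\bigr\}.
\]
By the Siegel integration formula (a direct consequence of \eqref{Minkowski}), the $\mu$-measure of each set in this union is bounded by a constant multiple of the Lebesgue measure of $\partial\scrC(\scrA_i,\scrW)\subset\RR^n$, which vanishes under the (implicit) hypothesis that $\partial\scrA_i$ and $\partial\scrW$ have zero Lebesgue measure. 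I expect this continuity check to be the most delicate point of the argument, as the preceding reductions are by now standard; the non-compactness of $\scrX$ plays no essential role here, since $\one_E$ is bounded and $\varphi_t$ preserves $\mu$.
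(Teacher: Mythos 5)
Your reduction via the scaling relation \eqref{secscale2} and the identification of the event through \eqref{U2} are exactly the paper's first two steps, and invoking mixing of $\varphi_t$ (via Howe--Moore) against the characteristic function of $\scrE$ with an $L^1$-to-$L^2$ approximation of the density of $\lambda$ is also the right mechanism. But you then derail the argument by imposing the hypothesis that $\scrE$ be a $\mu$-continuity set and declaring the verification of $\mu(\partial\scrE)=0$ to be "the main technical step." This is a conflation of two distinct modes of convergence. When you test against $\chi_\scrE\in L^\infty(\mu)\subset L^2(\mu)$ and integrate against a density $\psi\in L^2(\mu)$ (or approximate $\psi\in L^1$ by $L^2$ functions), the mixing property of $\varphi_t$ gives
\begin{align*}
\int_\scrX (\chi_\scrE\circ\varphi_{-t})\,\psi\,d\mu\to\mu(\scrE)\int_\scrX\psi\,d\mu
\end{align*}
for \emph{every} Borel set $\scrE$, with no restriction on $\partial\scrE$. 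The Portmanteau-type condition $\mu(\partial\scrE)=0$ is needed only when one has mere weak-$*$ convergence of the measures $(\varphi_{-t})_*\lambda$, which is the situation in the variant Theorem 2$'$ for admissible $\lambda$ -- and that is precisely where the paper runs the boundary argument you sketch, under the correspondingly stronger hypotheses that the $\scrA_i$ be bounded with $\vol_{\RR^k}(\partial\scrA_i)=0$.

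The error is not merely cosmetic: Theorem \ref{thm2} is stated for \emph{arbitrary} Borel sets $\scrA_i$, so there is no "(implicit) hypothesis" that $\partial\scrA_i$ has zero Lebesgue measure. Your boundary verification therefore proves a strictly weaker statement. Moreover, the containment $\partial\scrE\subset\bigcup_i\{\Gamma g:\hatZZ^n g\cap\partial\scrC(\scrA_i,\scrW)\neq\emptyset\}$ is established in the paper only when the $\scrA_i$ are bounded (it relies on extracting a common finite set $F$ of lattice vectors near $g$ that could affect the count, which uses boundedness of $\scrC(\scrA_i,\scrW)$); for unbounded $\scrA_i$ this inclusion does not follow from the argument you outline. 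Dropping the whole last paragraph and applying mixing directly to $\chi_\scrE$ closes the argument and recovers the theorem in full generality, which is exactly what the paper does.
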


\begin{proof}
For $L=\e^{kt}$, 
the scaling property \eqref{secscale2}, yields
\begin{equation}
\begin{split}
&\left\{ x\in\scrX : \#\left\{ j : L^{-n/k} \xi_j(x,L) \in\scrA_i \right\}  = N_i\, \forall i \right\} \\
& =\left\{ x\in\scrX : \#\left\{ j :  \xi_j(\varphi_{-t}(x),1) \in\scrA_i \right\}  = N_i\, \forall i \right\} \\
& =\varphi_t\scrE
\end{split}
\end{equation}
with
\begin{equation}\label{thm2pf3}
\scrE = \left\{ x\in\scrX : \#\left\{ j : \xi_j( x,1) \in\scrA_i  \right\}  = N_i\, \forall i\right\} =
\left\{ \Gamma g \in\scrX : \#\left( \hatZZ^n g \cap \scrC(\scrA_i,\scrW) \right)= N_i\, \forall i \right\} .
\end{equation}
Since $\lambda$ is absolutely continuous we have
$\lambda=\psi d\mu$ for some non-negative $\psi\in \L^1(\scrX,\mu)$ with $\int_{\scrX}\psi(x)\,d\mu(x)=1$.
If $\psi\in \L^2(\scrX,\mu)$ then
\begin{align}\label{thm2pf1}
\lambda(\varphi_t\scrE)=\int_{\scrX}(\chi_{\scrE}\circ\varphi_{-t})\,\psi\,d\mu
\to\int_{\scrX}\chi_{\scrE}\,d\mu\int_{\scrX}\psi\,d\mu=\mu(\scrE)
\qquad\text{as }\: t\to\infty,
\end{align} 
by (the $\L^2$ formulation of) the mixing property of $\varphi_t$.
But \eqref{thm2pf1}  in fact extends to $\psi\in \L^1(\scrX,\mu)$,
as follows by a standard approximation argument, using the fact that
$\L^2(\scrX,\mu)$ is dense in $\L^1(\scrX,\mu)$.
\end{proof}

Theorem \ref{thm2} has the following corollary for the first hitting time.

\begin{cor}\label{cor1}
For $X>0$, 
\begin{equation}\label{cor1res}
\lim_{L\to\infty}
\lambda\left(\left\{ x\in\scrX : L^{-n/k} |\xi_1(x,L)|>X \right\}\right)  =  \int_X^\infty \psi_0(r)\, dr,
\end{equation}
with the probability density $\psi_0\in\L^1(\RR_{>0})$ defined by
\begin{equation}\label{equ:probdens}
\int_X^\infty \psi_0(r)\, dr = \Psi_0^{(1)}(\B_X^k) ,  
\end{equation}
where $\B_X^k=\{s \in\RR^k : | s | \leq X \}$ is as before.
\end{cor}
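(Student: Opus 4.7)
The plan is to deduce Corollary \ref{cor1} as a direct specialisation of Theorem \ref{thm2}, together with an elementary verification that the resulting limit is the tail of a genuine probability density.

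First, I would apply Theorem \ref{thm2} with $r=1$, $\scrA_1=\B_X^k$ and $N_1=0$. By the ordering convention in \eqref{xijdefordering}, the event $\{\#\{j:L^{-n/k}\xi_j(x,L)\in\B_X^k\}=0\}$ is precisely the event that $L^{-n/k}|\xi_1(x,L)|>X$, so the theorem immediately yields \eqref{cor1res} with the right-hand side equal to $\Psi_0^{(1)}(\B_X^k)$.

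Next, writing $F(X):=\Psi_0^{(1)}(\B_X^k)$, I would verify that $F$ is the tail $\int_X^\infty\psi_0(r)\,dr$ of a probability density $\psi_0\in\L^1(\RR_{>0})$. Monotonicity is immediate from $\B_X^k\subset\B_{X'}^k$ for $X\leq X'$. The lattice-point reformulation in \eqref{LD019}, namely $F(X)=\mu\{\Gamma g:\hatZZ^n g\cap\scrC(\B_X^k,\scrW)=\emptyset\}$, reduces the endpoint normalisations to simple geometric facts: as $X\to 0^+$ the set $\scrC(\B_X^k,\scrW)$ shrinks into the coordinate subspace $\{y_1=\cdots=y_k=0\}$, which is avoided by $\hatZZ^n g$ for $\mu$-almost every $g$, giving $F(0^+)=1$; as $X\to\infty$, ergodicity of $\varphi_t$ (equivalently, the fact that $\mu$-a.e.\ orbit returns to the section $\scrH(\scrW,1)$) forces $|\xi_1(x,1)|<\infty$ $\mu$-almost surely, giving $F(\infty)=0$.

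The remaining point, which I expect to be the main (mild) technical obstacle, is absolute continuity of $F$, required for $\psi_0$ to exist as an $\L^1$ density. My plan here is to invoke Siegel's mean-value theorem for primitive vectors, which gives $\mu\{\Gamma g:\hatZZ^n g\cap\scrA\neq\emptyset\}\leq\zeta(n)^{-1}\vol(\scrA)$ for any bounded Borel $\scrA\subset\RR^n$. Applied to the shell $\scrC(\B_{X'}^k,\scrW)\setminus\scrC(\B_X^k,\scrW)$, whose Lebesgue volume equals $\vol(\B_1^k)(X'^k-X^k)\int_\scrW y_n^k\,dy_{k+1}\cdots dy_n$, this yields $|F(X)-F(X')|=O(X'^k-X^k)$ for $0<X<X'$, so $F$ is locally Lipschitz on $\RR_{>0}$. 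The density $\psi_0:=-F'$ is then a non-negative $\L^1(\RR_{>0})$ function with total mass $1$, completing the argument.
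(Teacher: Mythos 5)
Your first paragraph matches the paper's entire proof verbatim: the paper simply observes that the corollary is immediate from Theorem \ref{thm2} with $r=1$, $N_1=0$, $\scrA=\B_X^k$. So on the headline claim you are in complete agreement with the source.

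Where you go beyond the paper is in verifying that $F(X):=\Psi_0^{(1)}(\B_X^k)$ really is the tail of an $\L^1$ probability density — the paper treats this as immediate, and only later supplies an explicit formula for $\psi_0$ via the impact parameter (equation \eqref{psi0intermsofpsi}). Your Siegel-based Lipschitz estimate on the shell $\scrC(\B_{X'}^k,\scrW)\setminus\scrC(\B_X^k,\scrW)$ is a clean, elementary way to get absolute continuity without invoking the later machinery of Section \ref{impactstatSEC}; it also recovers the leading constant $\kappa$ of \eqref{equ:kappa}. The endpoint $F(0^+)=1$ argument is fine (the shrinking sets are all contained in the bounded region $\scrC(\B_1^k,\scrW)$, so the nested hitting events converge to hitting the null set $\{y_1=\cdots=y_k=0\}\cap(\RR^k\times\scrW)$).

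One inaccuracy to fix: for $F(\infty)=0$ you invoke ``ergodicity of $\varphi_t$,'' but the set $\RR^k\times\scrW=\bigcup_{X}\scrC(\B_X^k,\scrW)$ is \emph{not} invariant under $\Phi(t)$ (the diagonal action rescales the last $m+1$ coordinates, hence rescales $\scrW$). The relevant invariance is under $U(s)$, which fixes those coordinates; so the correct ingredient is ergodicity of the unipotent action $h_s$ (or, equivalently, the standing fact used at \eqref{dissi} that $\mu$-a.e.\ orbit hits the section). With that substitution the argument is correct.
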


\begin{proof}
This is immediate from Theorem \ref{thm2} for $r=1$, $N_1=0$, $\scrA=\B_X^k$.
\end{proof}

We will in the following drop the superscript and write $\Psi_0(\B_X^k) = \Psi_0^{(1)}(\B_X^k)$.

 \begin{remark}
An inspection of formula \eqref{LD019} in the case $k=n-1$ yields that the limit distribution coincides with directional statistics of 
primitive points of an $n$-dimensional lattice \cite{MS10}, 
which in turn are the same as the fine-scale statistics of 
the $k$-dimensional sequence of Farey points 
\cite{Marklof13}.

For $k<n-1$,
if $\scrW$ is of the special form $\scrW=\scrW'\times(0,1]$
with $\scrW'\subset\RR^m$,
then we obtain an interpretation of the limit distributions in Theorem \ref{thm2} in terms of directional statistics of Euclidean model sets as follows. For a Euclidean lattice $\scrL=\ZZ^n g_0$ define the set
\begin{equation}\label{PWL}
\scrP(\scrW,\scrL) = \left\{ \pi'(y) : y\in\scrL, \pi_\perp(y)\in\scrW \right\} ,
\end{equation}
where $\pi':\RR^n\to\RR^{k+1}$ and $\pi_\perp:\RR^n\to\RR^m$
are given by $\pi'(y)=(y_1,\ldots,y_k,y_n)$
and $\pi_\perp(y)=(y_{k+1},\ldots,y_{k+m})$.
For $\scrW=\scrW'\times(0,1]$ with $\scrW'$ bounded with $\vol_{\RR^k}(\partial\scrW')=0$ and non-empty interior, 
the model (or cut-and-project) set $\scrP(\scrW,\scrL)$ is a Delone set in $\RR^{k+1}$, 
i.e., it is uniformly discrete and relatively dense.
The limit law for the directional statistics of $\scrP(\scrW,\scrL)$ was proved in \cite[Appendix A]{MS14}. Comparing this with Theorem \ref{thm2}, we see that the limit distributions are almost identical, with the caveat that we need to replace in \eqref{PWL} the lattice $\scrL=\ZZ^n g_0$ by its primitive subset $\widehat\scrL=\hatZZ^n g_0$.
Thus the limit distributions are not the same, but can be related to each other via a classical inclusion-exclusion argument. Note that the visible points in model sets studied in \cite{MS15} is generally not the same as the cut-and-project set $\scrP(\scrW,\widehat\scrL)$ defined from the primitive subset of a lattice. The directional statistics of more general adelic model sets were analysed in \cite{ElBaz}.
\end{remark}

\section{More general measures}\label{secMore}

If we restrict the family of permitted test sets,
we can relax the condition that $\lambda$ is absolutely continuous with respect to $\mu$
in Theorem~\ref{thm2}. 

\begin{defn}\label{def:admi}
We say that a Borel probability measure $\lambda$ on $\scrX$ is 
\textit{admissible} if 
\begin{equation}\label{lconrep}
\lim_{t\to\infty} \lambda\left(\varphi_t\scrE\right)  = \mu\left(\scrE\right)  
\end{equation}
holds for every Borel subset $\scrE\subset\scrX$ with $\mu(\partial\scrE)=0$.
\end{defn}

\begin{remark}\label{rmk:port}
This notation was also introduced in \cite{MP25}. We note that  by the Portmanteau theorem, the following are equivalent:
\begin{enumerate}
\item
$\lambda$ is admissible; 
\item $\limsup_{t\to\infty} \lambda\left(\varphi_t\scrC\right)\leq \mu\left(\scrC\right)  
$
for every closed subset $\scrC\subset\scrX$;
\item
$\liminf_{t\to\infty} \lambda\left(\varphi_t\scrU\right)\geq \mu\left(\scrU\right)  
$
for every open subset $\scrU\subset\scrX$.
\end{enumerate}
\end{remark}
An example of an admissible measure is given by 
\begin{equation}\label{exadmmeasure1}
\lambda(\scrE) = \rho(\{ B \in\RR^{k\times (m+1)} : x_0 V(B) \in\scrE\}) ,
\end{equation}
for any fixed Borel probability measure $\rho$ on $\RR^{k\times (m+1)}$ which is absolutely continuous with respect to Lebesgue, any fixed $x_0\in\scrX$, and 
\begin{equation}
V(B) = 
\begin{pmatrix} 1_k & B \\ 0 & 1_{m+1} \end{pmatrix}.
\end{equation}
The key point here is that $V(B)$ parametrises the full stable horospherical subgroup for $\varphi_t$. This in turn implies (by Margulis' thickening argument) that $\varphi_{-t} \{ x_0 V(B) : B\in \scrK \}$ becomes uniformly distributed in $\scrX$ with respect to $\mu$, as $t\to\infty$, for any compact set $\scrK\subset\RR^{k\times (m+1)}$ with non-empty interior and boundary of Lebesgue measure zero.
We can in fact choose more singular $\rho$, for which the admissibility of $\lambda$ follows from -- or extends -- Ratner's measure classification theorem. For the most recent results see for example \cite{Shah24} (and references therein) in the case when $\rho$ is supported on submanifolds, and \cite{BenardHeZhang2024,Datta24,Khalil23,Khalil25} (and references therein) for fractal measures $\rho$.

\vspace{5pt}

The variant of Theorem \ref{thm2} looks as follows: 
\begin{customthm}{2'}\label{thm2var}
\textit{Let $\lambda$ be an admissible probability measure on $\scrX$,
let $\scrA=\scrA_1\times\cdots\times\scrA_r$ 
where $\scrA_1,\ldots,\scrA_r$ are bounded Borel subsets of $\RR^k$ with
$\vol_{\RR^k}(\partial \scrA_i)=0$, and let $N\in\ZZ_{\geq 0}^r$.
Then
\begin{equation}
\lim_{L\to\infty}
\lambda\left(\left\{ x\in\scrX : \#\left\{ j  : L^{-n/k} \xi_j(x,L)\in\scrA_i \right\}  = N_i\, \forall i\right\}\right)  
= \Psi^{(r)}_N(\scrA) ,
\end{equation}
with $\Psi^{(r)}_N(\scrA)$ as in \eqref{LD019}.}
\end{customthm}
\begin{proof}
For each $i$, since both $\scrA_i$ and $\scrW$ have boundary of Lebesgue measure zero,
also $\scrC(\scrA_i,\scrW)$ has boundary of Lebesgue measure zero.
Indeed, one verifies that for each $i$, 
\begin{align}\label{thm2ppf1}
\partial\,\scrC(\scrA_i,\scrW)\subset\{y_n=0\}
\:\bigcup\:\{y\in\RR^n : (y_1,\ldots,y_k)\in y_n\partial\scrA_i\}
\:\bigcup\:\{y\in\RR^n : (y_{k+1},\ldots,y_n)\in\partial\scrW\},
\end{align}
and it is immediate (using Fubini) that each of the three sets in the right hand side of
\eqref{thm2ppf1} has Lebesgue measure zero.

Let $\scrE\subset\scrX$ be the set defined in \eqref{thm2pf3}.
We claim that  $\mu(\partial\scrE)=0$. Hence since $\lambda$ is admissible,
$\lim_{t\to\infty} \lambda\left(\varphi_t\scrE\right)  = \mu\left(\scrE\right)$,
and so the proof of Theorem \ref{thm2} carries over to the present case. 
We now prove the claim by observing that
\begin{align}\label{equ:bozero}
\partial\scrE\subset \left\{\Gamma g\in \scrX: \hat{\ZZ}^ng\cap \bigcup_{i=1}^r\partial\,\scrC(\scrA_i,\scrW)\neq \emptyset\right\}.
\end{align}
Since $\vol_{\RR^n}(\partial\,\scrC(\scrA_i,\scrW))=0$, it follows from Siegel's volume formula  \cite{Siegel1945} that the set in the above right hand side is of $\mu$-measure zero, and thus $\mu(\partial\scrE)=0$. It thus remains to prove \eqref{equ:bozero}. Take $\Gamma g\in\partial\scrE$,
then there exists a sequence of elements $g_j\in G$ with $g_j\to g$ and $\Gamma g_j\in\scrE$ for all $j$,
and another sequence of elements $\tg_j\to g$ with $\Gamma \tg_j\notin\scrE$ for all $j$.
The last fact means that for each $j$ there is some $i$ such that
$\#\bigl(\hatZZ^n\tg_j\cap\scrC(\scrA_i,\scrW)\bigr)\neq N_i$.
By passing to a subsequence, we may assume that there is a 
\textit{fixed} $i\in\{1,\ldots,r\}$ such that
\begin{align}\label{thm2ppf2}
\#\bigl(\hatZZ^n\tg_j\cap\scrC(\scrA_i,\scrW)\bigr)\neq N_i\qquad\text{holds for each $j$.}
\end{align}
On the other hand we have
\begin{align}\label{thm2ppf3}
\#\bigl(\hatZZ^n g_j\cap\scrC(\scrA_i,\scrW)\bigr)=N_i\qquad\text{for each $j$.}
\end{align}
Note that $\scrC(\scrA_i,\scrW)$ is bounded, since $\scrW$
and $\scrA_i$ are bounded.
Since $g_j\to g$ and $\tg_j\to g$, there exists a compact subset
$\scrG\subset G$ with $\{g_j\}\cup\{\tg_j\}\subset\scrG$,
and now 
$\bigcup_{g\in\scrG}\scrC(\scrA_i,\scrW)g^{-1}$ is a bounded subset of $\RR^n$,
meaning that it intersects $\hatZZ^n$ in a finite subset $F$.
By construction, for every $v\in\hatZZ^n\setminus F$
we have $vg_j\notin\scrC(\scrA_i,\scrW)$
and $v\tg_j\notin\scrC(\scrA_i,\scrW)$ for all $j$.
Hence it follows from \eqref{thm2ppf2} and \eqref{thm2ppf3} that
\begin{align}
\#\bigl(F \tg_j\cap\scrC(\scrA_i,\scrW)\bigr)\neq N_i
\quad\text{and}\quad
\#\bigl(F g_j\cap\scrC(\scrA_i,\scrW)\bigr)=N_i,
\qquad\forall j.
\end{align}
Hence for each $j$ there is a vector $v\in F$ such that
one but not both of $v\tg_j$ and $vg_j$ belongs to
$\scrC(\scrA_i,\scrW)$.
By passing to a subsequence we may assume that there is a \textit{fixed}
$v\in F$ such that for each $j$,
one but not both of $v\tg_j$ and $vg_j$ belongs to
$\scrC(\scrA_i,\scrW)$.
Using $g_j\to g$ and $\tg_j\to g$,
this implies that $vg\in\partial\scrC(\scrA_i,\scrW)$, finishing the proof of \eqref{equ:bozero}, and thus also the theorem. 
\end{proof}

\begin{remark}
Corollary \ref{cor1} extends verbatim to arbitrary admissible measures $\lambda$,
since the ball $\scrA=\B_X^k$ is bounded and has boundary of Lebesgue measure zero.
\end{remark}

\begin{remark}
Theorem \ref{thm2var} (and all later statements in this paper) remain valid
if we allow the measure $\lambda$ to depend on $L$,
say $\lambda=\lambda_L$,
subject to the assumption that
\begin{equation}\label{lconLnew}
\lim_{t\to\infty}\lambda_{e^{kt}}(\varphi_t\scrE)=\mu(\scrE)
\end{equation}
for all Borel sets $\scrE$ with $\mu(\partial\scrE)=0$.
An example of a family of such measures $\lambda_L$ is given by 
\begin{equation}\label{lambdatexdef}
\lambda_L \left(\scrE\right) = \lambda\left(\varphi_{\tau(L)-\frac{1}{k}\log L}\scrE\right),
\end{equation}
with any $\tau:\RR_{> 0}\to\RR_{> 0}$ such that  $\tau(L)\to\infty$ as $L\to\infty$, and $\lambda$ admissible as defined in \eqref{exadmmeasure1}. For $k=n-1$, this choice of $\lambda_L$ corresponds to the family of measures studied by Tseng \cite{Tseng23}.
\end{remark}


\section{Impact statistics}
\label{impactstatSEC}

%
%
%
%
%

We will now consider the impact parameter at the $j$th hit, which we define as the pre-image
\begin{equation}\label{omegajxLdefrep}
\omega_j(x,L) \in \pi_L^{-1}( \{h_{\xi_j{{(x,L)}}}(x)\} ) \subset \Sigma(\scrW,1)
\end{equation}
of the actual location $h_{\xi_j{{(x,L)}}}(x)\in\scrX$ of the hit. Recall that the number of distinct pre-images corresponds exactly to the multiplicity of $\xi_j(x,L)$; thus we may 
choose the points $\omega_j(x,L)$ so that 
\begin{align}\label{omegajxLmultpprecise}
\pi_L^{-1}\bigl(\{h_s(x)\}\bigr)=\bigl\{\omega_j(x,L) : j\in \ZZ_{\geq 1},\: \xi_j(x,L)=s\bigr\},
\qquad\forall s\in\R^k.
\end{align}
We also take the points $\omega_j(x,L)$ to be ordered so that
\begin{align}\label{omegajxLscaling}
\omega_j(x,L)=\omega_j(\varphi_{-t}(x),Le^{-kt}).
\end{align}
To see that this is possible, choose the points $\omega_j(x,1)$ so that 
\eqref{omegajxLmultpprecise} holds for $L=1$;
then for general $L>0$ set $\omega_j(x,L):=\omega_j(\varphi_{-(\log L)/k}(x),1)$.
Using the scaling properties \eqref{commute} and \eqref{secscale2},
one now verifies that both
\eqref{omegajxLmultpprecise} and \eqref{omegajxLscaling} 
hold for all $L>0$, $x\in\scrX$ and $j\in\ZZ_{\geq1}$.

Theorem \ref{thm2var} generalises to the following.

\begin{thm}\label{thm3}
Let $\lambda$ be an admissible probability measure on $\scrX$.
For $\scrA=\scrA_1\times\cdots\times\scrA_r\subset(\RR^k)^r$ bounded with $\vol_{\RR^k}(\partial \scrA_i)=0$, and $\scrD=\scrD_1\times\cdots\times\scrD_r\subset\Sigma(\scrW,1)^r$ with each $\scrD_i$ having compact closure in $HQ$ and $\nu(\partial\scrD_i)=0$, $N\in\ZZ_{\geq 0}^r$,
\begin{equation}
\lim_{L\to\infty}
\lambda\left(\left\{ x\in\scrX : \#\left\{ j : L^{-n/k} \xi_j(x,L) \in\scrA_i ,  \; \omega_j(x,L)\in\scrD_i \right\}  = N_i\, \forall i\right\}\right) = \Psi^{(r)}_N(\scrA,\scrD) ,
\end{equation}
where 
\begin{equation}\label{LD01910}
\begin{split}
\Psi^{(r)}_N(\scrA,\scrD)
& =  \mu\left(\left\{ x\in\scrX : \#\left\{ j : \xi_j(x,1) \in\scrA_i ,  \; \omega_j(x,1)\in\scrD_i \right\}  = N_i\, \forall i\right\}\right)\\ 
& =\mu\left(\left\{ \Gamma g \in\scrX : \#\left( \Gamma g \cap  \scrD_i U(-\scrA_i) \right) = N_i\, \forall i \right\} \right) .
\end{split}
\end{equation}
Here $\partial \scrD_i$ is the boundary of $\scrD_i$ considered as a subset of $HQ$.
\end{thm}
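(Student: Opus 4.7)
The strategy is to extend the proof of Theorem~\ref{thm2var} by replacing the counting of primitive integer vectors in $\scrC(\scrA_i,\scrW)\subset\RR^n$ by the counting of points of the orbit $\Gamma g\subset G$ in the shifted sections $\scrD_i U(-\scrA_i)\subset G$.

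First I would exploit the scaling relations. Setting $L=e^{kt}$, identities \eqref{secscale2} and \eqref{omegajxLscaling} give $L^{-n/k}\xi_j(x,L)=\xi_j(\varphi_{-t}(x),1)$ and $\omega_j(x,L)=\omega_j(\varphi_{-t}(x),1)$, so the set whose $\lambda$-measure we study equals $\varphi_t\scrE$, with
\[
\scrE=\bigl\{x\in\scrX : \#\{j : \xi_j(x,1)\in\scrA_i,\ \omega_j(x,1)\in\scrD_i\}=N_i,\ \forall i\bigr\}.
\]
To rewrite $\scrE$ as in the second line of \eqref{LD01910}, I unpack the definitions: by \eqref{omegajxLmultpprecise} (with $L=1$) together with the unique decomposition of $G_+$ as $HQ\cdot U(\RR^k)$, each pair $(\xi_j(x,1),\omega_j(x,1))$ corresponds bijectively to an element $\gamma g\in\Gamma g$ written as $aU(-s)$ with $a=\omega_j\in\Sigma(\scrW,1)$ and $s=\xi_j$; the second equality in \eqref{LD01910} follows. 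Once $\mu(\partial\scrE)=0$ is established, the admissibility of $\lambda$ immediately gives $\lambda(\varphi_t\scrE)\to\mu(\scrE)$ as $t\to\infty$, which is the claim.

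The main step is to verify $\mu(\partial\scrE)=0$. Following the pattern of the proof of Theorem~\ref{thm2var}, I would show
\[
\partial\scrE\subset\biggl\{\Gamma g\in\scrX : \Gamma g\cap\bigcup_{i=1}^r\partial(\scrD_iU(-\scrA_i))\neq\emptyset\biggr\},
\]
where the boundaries are taken in $G$. For this, take $\Gamma g\in\partial\scrE$ and approximate it by $\Gamma g_j\in\scrE$ and $\Gamma\tg_j\notin\scrE$ with $g_j,\tg_j\to g$. The compact-closure hypothesis on each $\scrD_i$ in $HQ$ together with the boundedness of each $\scrA_i$ ensures that $\scrD_iU(-\scrA_i)$ has compact closure in $G_+$; hence, in a small fixed neighbourhood of $g$, only finitely many $\gamma\in\Gamma$ can map any point into $\bigcup_i\scrD_iU(-\scrA_i)$. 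A pigeon-hole extraction then yields a fixed index $i$ and a fixed $\gamma\in\Gamma$ for which, for every $j$, exactly one of $\gamma g_j,\gamma\tg_j$ lies in $\scrD_iU(-\scrA_i)$; passing to the limit gives $\gamma g\in\partial(\scrD_iU(-\scrA_i))$.

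Finally, I would show that the right hand side of the above inclusion has $\mu$-measure zero. Using the decomposition \eqref{muasnuds} on $G_+$, one has
\[
\partial(\scrD_iU(-\scrA_i))\subset (\partial\scrD_i)\,U(-\overline{\scrA_i})\,\cup\,\overline{\scrD_i}\,U(-\partial\scrA_i),
\]
and this is Haar-null in $G$ by Fubini from $\nu(\partial\scrD_i)=0$ and $\vol_{\RR^k}(\partial\scrA_i)=0$; the compact-closure hypothesis again ensures that this set stays bounded in $G_+$ and so avoids the cusp $\{g_{nn}=0\}$. Since the preimage in $G$ of $\{\Gamma g:\Gamma g\cap E\neq\emptyset\}$ equals $\bigcup_{\gamma\in\Gamma}\gamma^{-1}E$, and $\Gamma$ is countable with left multiplication preserving Haar measure, any Haar-null $E$ produces a $\mu$-null image in $\scrX$. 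The main obstacle will be the bookkeeping in the pigeon-hole step; the compact-closure hypothesis on each $\scrD_i$ in $HQ$ is used there in an essential way, since without it infinitely many $\gamma\in\Gamma$ could contribute to the count and continuity of the intersection numbers would fail.
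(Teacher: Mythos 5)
Your proof is correct and follows essentially the same route as the paper: reduce to $\varphi_t\scrE$ via the scaling relations, establish $\mu(\partial\scrE)=0$ by showing $\partial\scrE$ lies in the set of lattices meeting $\bigcup_i\partial(\scrD_iU(-\scrA_i))$, and use the product decomposition $G_+\simeq HQ\times\RR^k$ together with $\nu(\partial\scrD_i)=0$, $\vol_{\RR^k}(\partial\scrA_i)=0$ to see this set is $\mu$-null. Your version is slightly more explicit in two useful places — the closure bars in $\partial(\scrD_iU(-\scrA_i))\subset(\partial\scrD_i)U(-\overline{\scrA_i})\cup\overline{\scrD_i}\,U(-\partial\scrA_i)$, and the remark that compact closure keeps the boundary away from $\{g_{nn}=0\}$ so it suffices to work inside $G_+$ — but the content is the same.
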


\begin{proof}
This is similar to the proof of  Theorem \ref{thm2var}:
For $L=e^{kt}$, the scaling properties \eqref{secscale2} and \eqref{omegajxLscaling} yield
\begin{align}
\bigl\{ x\in\scrX : \#\bigl\{ j : L^{-n/k} \xi_j(x,L) \in\scrA_i ,  \; \omega_j(x,L)\in\scrD_i \bigr\}  = N_i\, \forall i\bigr\}
=\varphi_t\scrE
\end{align}
with
\begin{equation}
\begin{split}
\scrE & =\left\{ x\in\scrX : \#\left\{ j : \xi_j(x,1) \in\scrA_i ,  \; \omega_j(x,1)\in\scrD_i \right\}  = N_i \, \forall i\right\} \\ & =\left\{ \Gamma g \in\scrX : \#\left(\Gamma g\cap \scrD_i U(-\scrA_i) \right) = N_i\, \forall i \right\}  .
\end{split}
\end{equation}

We only need to verify $\mu(\partial\scrE)=0$ so that $\lim_{t\to\infty} \lambda\left(\varphi_t\scrE\right)  = \mu\left(\scrE\right)$.
Recall that
\begin{align*}
Q=\{R(w): w\in\RR^m\times\R_{>0}\}
\end{align*}
is a closed subgroup of $G$.
Moreover, the map
\begin{align}\label{thm3ppf1}
\langle a,s\rangle\mapsto a U(s)
\end{align}
is a smooth diffeomorphism from $(HQ)\times\RR^k$
onto the open subset $G_+=\bigl\{(g_{ij})\in G: g_{nn}>0\bigr\}$ of $G$. Thus we have for each $i$,
$$
\partial(\scrD_iU(-\scrA_i))\subset (\partial\scrD_i)U(-\scrA_i)\cup \scrD_i(-\partial\scrA_i),
$$
and so the assumptions $\vol_{\RR^k}(\partial \scrA_i)=0$ and $\nu(\partial\scrD_i)=0$ imply that 
$\mu\bigl(\partial(\scrD_iU(-\scrA_i))\bigr)=0$
for each $i$, cf. \eqref{muasnuds} and \eqref{nuDEF}.
Now by similar arguments as in the proof of Theorem \ref{thm2var}, using the boundeness of $\scrA_i$ one verifies that
\begin{align*}
\partial\scrE\subset \left\{\Gamma g\in \scrX: g\in  \bigcup_{i=1}^r\partial(\scrD_iU(-\scrA_i))\right\}.
\end{align*}
This proves  
$\mu(\partial\scrE)=0$, and also the theorem.
\end{proof}

We now turn to the generalisation of Corollary \ref{cor1}. For $\omega\in\Sigma(\scrW, L)$, we define \begin{equation}\label{RomegaLdef}
\scrR(\omega,L)=\left\{ \xi_1(x,L) : x\in\scrX,  \; h_{\xi_1(x,L)}{{(x)}} = \pi(\omega) \right\} .
\end{equation}
This represents the set of all first hits with impact location at ${{\pi}}(\omega)$ on the section $\scrH(\scrW,L)$. Since almost every $x\in\scrX$ will eventually hit $\scrH(\scrW,L)$,
we then have, for any Borel measurable function $f:\scrX\to\RR_{\geq 0}$, 
\begin{equation}\label{dissi}
\int_{\scrX} f d\mu = \int_{{\Sigma(\scrW, L)}} \bigg( \int_{\scrR(\omega,L)} f(h_{-s}\circ  \pi(\omega))\, ds\bigg)d\nu(\omega).
\end{equation}
Setting $f=1$ yields the formula
\begin{equation}
\int_{\Sigma(\scrW, { L})}  \vol_{\RR^k}(\scrR(\omega,L))\, d\nu(\omega) = 1 .
\end{equation}

The following extends Corollary \ref{cor1}.

\begin{thm}\label{cor1B}
Let $\lambda$ be an admissible probability measure on $\scrX$, let $\scrB$ be a Borel subset of $\scrW$ satisfying $\vol_{\RR^{m+1}}(\partial\scrB)=0$, and set $\scrD=\scrF_H\left\{R(w): w\in \scrB\right\}\subset\Sigma(\scrW,1)$. 
Then for any $X>0$, 
\begin{equation}\label{convi}
\lim_{L\to\infty}
\lambda\left(\left\{ x\in\scrX : L^{-n/k} |\xi_1(x,L)|> X ,  \; \omega_1(x,L)\in\scrD \right\}\right)  = \int_{(\B_X^k)^c} \int_{\scrD} \psi(\xi,\omega)\,  d\nu(\omega)\, d\xi,
\end{equation}
with the $\L^1$ probability density
\begin{equation}\label{densi}
\psi(\xi,\omega) = \one(\xi\in\scrR(\omega,1)) .
\end{equation}
\end{thm}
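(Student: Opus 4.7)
The argument follows the template of Theorems \ref{thm2var} and \ref{thm3}: first reduce to a fixed-time problem via the scaling identities \eqref{secscale2} and \eqref{omegajxLscaling}, then apply admissibility of $\lambda$ through a boundary-measure estimate, and finally evaluate the resulting $\mu$-integral by the disintegration formula \eqref{dissi}. Setting $L=e^{kt}$, the scaling identities give
\begin{equation*}
\bigl\{x\in\scrX : L^{-n/k}|\xi_1(x,L)|>X,\; \omega_1(x,L)\in\scrD\bigr\} = \varphi_t\scrE_0,
\end{equation*}
where $\scrE_0 := \{x\in\scrX : |\xi_1(x,1)|>X,\; \omega_1(x,1)\in\scrD\}$. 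It therefore suffices to show $\mu(\partial\scrE_0)=0$ (so admissibility yields $\lim_{t\to\infty}\lambda(\varphi_t\scrE_0)=\mu(\scrE_0)$) and to identify $\mu(\scrE_0)$ with the integral on the right-hand side of \eqref{convi}.

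For the boundary estimate, use the correspondence in \eqref{U2}: each hit of the $\RR^k$-orbit of $\Gamma g$ on $\scrH(\scrW,1)$ is indexed by a primitive lattice point $p\in\hatZZ^n g$ with $p_n>0$ and $(p_{k+1},\ldots,p_n)\in\scrW$, with hit time $(p_1,\ldots,p_k)/p_n$ and $w$-part of the impact equal to $(p_{k+1},\ldots,p_n)$. The first hit corresponds to the primitive point $p^{\ast}$ minimizing $|(p_1,\ldots,p_k)/p_n|$, and $\Gamma g\in\scrE_0$ iff $|(p^{\ast}_1,\ldots,p^{\ast}_k)/p^{\ast}_n|>X$ and $(p^{\ast}_{k+1},\ldots,p^{\ast}_n)\in\scrB$. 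Arguing as in \eqref{equ:bozero} and the proof of Theorem \ref{thm3}, for $\Gamma g\in\partial\scrE_0$ one extracts, by passing to convergent subsequences of nearby points in $\scrE_0$ and $\scrE_0^{\mathrm{c}}$, a primitive point of $\hatZZ^n g$ lying on one of three codimension-one loci in $\RR^n$: (i) $\{y: |(y_1,\ldots,y_k)/y_n|=X,\;(y_{k+1},\ldots,y_n)\in\overline{\scrW}\}$; (ii) $\{y:(y_{k+1},\ldots,y_n)\in\partial\scrB\}$, which is a null set by the hypothesis $\vol_{\RR^{m+1}}(\partial\scrB)=0$; and (iii) the ``tie'' locus where two distinct primitive points share the minimum value of $|(y_1,\ldots,y_k)/y_n|$. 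Truncating to $\scrE_0^R := \scrE_0\cap\{|\xi_1(x,1)|\leq R\}$, each locus intersected with the corresponding bounded region of $\RR^n$ has Lebesgue measure zero, so Siegel's volume formula gives $\mu(\partial\scrE_0^R)=0$. Since $\mu(\scrE_0\setminus\scrE_0^R)\leq\mu(\{|\xi_1(\cdot,1)|>R\})\to 0$ as $R\to\infty$ by Corollary \ref{cor1}, we conclude $\mu(\partial\scrE_0)=0$.

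To compute $\mu(\scrE_0)$, apply \eqref{dissi} with $L=1$ and $f=\chi_{\scrE_0}$. For $\omega\in\Sigma(\scrW,1)$ and $s\in\scrR(\omega,1)$, the point $x=h_{-s}\pi(\omega)$ satisfies $\xi_1(x,1)=s$, and outside a $\nu$-null set of $\omega$ arising from self-intersections of $\scrH(\scrW,1)$ we also have $\omega_1(x,1)=\omega$. Hence $\chi_{\scrE_0}(h_{-s}\pi(\omega))=\one(|s|>X)\,\one(\omega\in\scrD)$ for $\nu\otimes ds$-almost every $(\omega,s)$, and Fubini yields
\begin{equation*}
\mu(\scrE_0)=\int_\scrD \int_{\scrR(\omega,1)\cap(\B_X^k)^{\mathrm{c}}}\!\!ds\, d\nu(\omega)=\int_{(\B_X^k)^{\mathrm{c}}}\!\int_\scrD \one(s\in\scrR(\omega,1))\,d\nu(\omega)\,ds,
\end{equation*}
matching the right-hand side of \eqref{convi}. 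The main obstacle is the boundary estimate of Step 2, since the event $\{\omega_1\in\scrD\}$ depends on the \emph{global} structure of the first hit rather than on a bounded time window; one must simultaneously handle the tie locus, the self-intersection strata of $\scrH(\scrW,1)$ in the regime $k<n-1$, and the unbounded first-hit time via the truncation $\scrE_0^R$ together with the tail estimate supplied by Corollary \ref{cor1}.
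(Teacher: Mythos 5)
Your overall plan --- scaling reduction, boundary estimate via admissibility, disintegration via \eqref{dissi} --- matches the paper's, and the three loci you identify are the correct ones. However, there are two problems in the boundary step.

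The more serious one is the truncation. Knowing $\mu(\partial\scrE_0^R)=0$ for every $R$ together with $\mu(\scrE_0\setminus\scrE_0^R)\to 0$ does \emph{not} imply $\mu(\partial\scrE_0)=0$: the topological boundary of an increasing union is not controlled by the boundaries of the truncations and the measure of the tails (one always has $\partial\scrE_0\subset\partial\scrE_0^R\cup\partial(\scrE_0\setminus\scrE_0^R)$, but the second term here has the same form as $\partial\scrE_0$, so this is circular). In fact the truncation is unnecessary. The sequence argument you sketch already yields the desired inclusion of $\partial\scrE_0$ into three null sets \emph{pointwise}: for a fixed boundary point $\Gamma g$ with $\xi_1(\Gamma g,1)$ finite, the competing primitive vectors $w_j$ in nearby lattices $\hat\ZZ^n g_j'$ have hit times $|s_{w_jg_j'}|$ dominated by $|s_{wg_j'}|\to|\xi_1(\Gamma g,1)|<\infty$, and since $\scrW$ is bounded this forces the $w_j$ into a bounded and hence finite subset of $\hatZZ^n$, allowing a convergent subsequence. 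Siegel's formula (respectively Rogers' formula) then applies directly even though the loci are unbounded, because a Borel set of Lebesgue measure zero has zero Siegel transform integral regardless of boundedness.

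The second issue is the tool used for your locus (iii): the ``tie'' event is a condition on \emph{pairs} of linearly independent primitive points, so it lives in $\RR^{2n}$ and Siegel's volume formula (a first-moment identity) does not see it. You need Rogers' second moment formula to conclude that the set $\scrM=\{\Gamma g : \exists\, \text{lin. ind. }(v,w)\in\hat\ZZ^ng\times\hat\ZZ^ng\text{ with }|v'|w_n=|w'|v_n,\ (v,w)\in(\RR^k\times\scrW)^2\}$ is $\mu$-null. The paper's proof decomposes $\scrE_0=\scrE_X\cap\scrE_\scrD$, handles $\partial\scrE_X$ by the void-event argument of Theorem \ref{thm2var} (which covers your locus (i)), and handles $\partial\scrE_\scrD$ by the null sets $\scrN$ (Siegel, your locus (ii)) and $\scrM$ (Rogers, your locus (iii)), with a case split on whether $\omega_1(x,1)$ lies in $\scrF_H\scrB^\circ$ or $\scrF_H(\scrW^\circ\cap(\scrB^c)^\circ)$.
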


\begin{proof}
By the scaling property we have for $t=(\log L)/k$,
$$
\left\{x\in \scrX: L^{-n/k}|\xi_1(x,L)|>X,\, \omega_1(x,L)\in \scrD\right\}=\varphi_{t}(\scrE_{X,\scrD}),
$$
where
\begin{align*}
\scrE_{X,\scrD}:=\left\{x\in \scrX: |\xi_1(x,1)|>X,\, \omega_1(x,1)\in \scrD\right\}.
\end{align*}
We thus have 
\begin{align*}
\lim_{L\to\infty}
\lambda\left(\left\{ x\in\scrX : L^{-n/k} |\xi_1(x,L)|> X ,  \; \omega_1(x,L)\in\scrD \right\}\right)  
=\mu(\scrE_{X,\scrD}),
\end{align*}
provided that $\mu(\partial\scrE_{X,\scrD})=0$. Then from the above expression applying \eqref{dissi} with $f=\chi_{\scrE_{\scrD}}$,
we obtain the formula in \eqref{convi}, \eqref{densi}.
Below we prove $\mu(\partial\scrE_{X,\scrD})=0$. Let 
$$
\scrE_X:=\left\{x\in \scrX: |\xi_1(x,1)|>X\right\},\quad \text{and}\quad \scrE_{\scrD}:=\left\{x\in \scrX: \omega_1(x, 1)\in \scrD \right\}.
$$
Since $\scrE_{X,\scrD}=\scrE_X\cap \scrE_{\scrD}$, we have $\partial\scrE_{X,\scrD}\subset \partial\scrE_X\cup\partial\scrE_{\scrD}$. It thus suffices to show $\mu(\partial\scrE_X)=\mu(\partial\scrE_{\scrD})=0$. 
For the first statement note that \eqref{U2} implies 
\begin{align*}
\scrE_X=\left\{x\in \scrX: \#\left\{j: \xi_j(x,1)\in \B_X^k\right\}=0\right\}=\left\{\Gamma g\in \scrX: \hat{\ZZ}^ng\cap \scrC(\B_X^k, \scrW)=\emptyset\right\}.
\end{align*}
Hence by the same arguments as in the proof of Theorem \ref{thm2var}, we have $\mu(\partial\scrE_X)=0$.

Next, we show $\mu(\partial\scrE_{\scrD})=0$. 
For any $v=(v',v'', v_n)\in \RR^k\times \RR^m\times \RR_{>0}$, we write $s_v:=\frac{v'}{v_n}$. Moreover, for any Borel $\scrG\subset \RR^m\times \RR_{>0}$ we use the notation $\scrF_{H}\scrG$ to denote the set $\scrF_H\{R(w): w\in \scrG\}$. 
We also introduce the following set:
$$
\scrN=\left\{\Gamma g\in \scrX: \hat{\ZZ}^ng\cap (\RR^k\times (\partial\scrB\cup \partial\scrW))\neq \emptyset\right\}.
$$
Since $\vol_{\RR^{m+1}}(\partial\scrB\cup \partial\scrW)=0$, we have $\mu(\scrN)=0$ by Siegel's volume formula \cite{Siegel1945}. 
Now take $x=\Gamma g\in \partial\scrE_{\scrD}$. Since $\xi_1(x,1)$ exists for $\mu$-a.e. $x$ and $\mu(\scrN)=0$, we may assume $\xi_1(x,1)$ exists and $x\not\in \scrN$. In particular, this implies $\omega_1(x,1)\notin \scrF_H((\partial \scrB\cup\partial\scrW)\cap \scrW)$, which means that either $\omega_1(x,1)\in \scrF_H\scrB^{\circ}$ or $\omega_1(x,1)\in \scrF_H(\scrW^{\circ}\cap (\scrB^c)^{\circ})$, where $\scrB^c:=(\RR^m\times \RR_{>0})\setminus \scrB$.  

We now show that $x$ is contained in some set of $\mu$-measure zero.  Since $\Gamma g\in\partial\scrE_{\scrD}$, there exist $\{g_j\}, \{g'_j\}\subset G$  such that $\lim_{j\to\infty}g_j=\lim_{j\to\infty}g'_j=g$, $x_j:=\Gamma g_j\in \scrE_{\scrD}$ and $x'_j:=\Gamma g_j'\notin \scrE_{\scrD}$ for all $j\geq 1$. 
If $\omega_1(x,1)\in \scrF_H\scrB^{\circ}$, then there exists $w\in \hat{\ZZ}^n$ such that  $wg\in \RR^k\times \scrB^{\circ}$ and $s_{wg}=\xi_1(x,1)$. Since $\scrB^{\circ}$ is open, up to removing finitely many elements, we may assume $wg_j, wg_j'\in \RR^k\times  \scrB^{\circ}$ for all $j\geq 1$. Since $x_j'\not\in \scrE_{\scrD}$, there exists $w_j\in \hat{\ZZ}^n$ such that $w_jg'_j\in \RR^k\times (\scrW\setminus \scrB)$ and $|s_{w_jg_j'}|\leq |s_{wg_j'}|$. 
Note also that $s_{wg_j'}\to s_{wg}$, thus 
$\sup_j |s_{w_jg_j'}|<\infty$.
Moreover, since $w_jg_j'\in \RR^k\times \scrW$ and $\scrW$ is bounded, we have $\sup_j\|w_jg_j'\|<\infty$, 
which then implies that $\sup_j\|w_j\|<\infty$ (since $g_j'\to g$). Thus there are only finitely many choices of such $w_j$'s. Hence up to passing to a subsequence we may assume $w_j=w'$ for all $j\geq 1$ and for some $w'\in \hat{\ZZ}^n$. Since $w'g_j'\in \RR^k\times (\scrW\setminus\scrB)$, taking $j\to\infty$ and using the assumption that $x\not\in \scrN$, we have $w'g\in \RR^k\times (\scrW\setminus\scrB^{\circ})$. This, together with the facts that $wg\in \RR^k\times \scrB^{\circ}$ and $\scrW\subset \RR^m\times \RR_{>0}$, implies that $w'\neq \pm w$. Moreover, since $|s_{w'g_j'}|\leq |s_{wg_j'}|$, taking $j\to\infty$ we get $|s_{w'g}|\leq |s_{wg}|$. Since $s_{wg}=\xi_1(x,1)$ and $w'g\in \RR^k\times \scrW$, we must have $|s_{w'g}|=|s_{wg}|$. This shows that $x$ is contained in the set 
$$
\scrM:=\left\{y=\Gamma g': \text{$\exists\, (v,w)\in \hat{\ZZ}^ng'\times \hat{\ZZ}^ng'$ lin. ind. s.t. $(v,w)\in \scrP$}\right\},
$$
with
$$
\scrP:=\left\{(v,w)\in (\RR^k\times \scrW)\times (\RR^k\times \scrW): |v'|w_n=|w'|v_n\right\}.
$$
Since $\vol_{\RR^{2n}}(\scrP)=0$, by Rogers' second moment formula on the Siegel transform \cite{Rogers1955} we have $\mu(\scrM)=0$.

Next, if $\omega_1(x,1)\in \scrF_H(\scrW^{\circ}\cap (\scrB^c)^{\circ})$, then there exists some $w\in \hat{\ZZ}^ng$ such that $wg\in \RR^k\times ( \scrW^{\circ}\cap (\scrB^c)^{\circ})$ and $s_{wg}=\xi_1(x,1)$. 
Since $\scrW^{\circ}\cap (\scrB^c)^{\circ}$ is open, up to removing finitely many elements we have $wg_j, wg_j'\in \RR^k\times (\scrW^{\circ}\cap (\scrB^c)^{\circ})$ for all $j\geq 1$. 
Since $x_j\in \scrE_{\scrD}$, there exists $w_j\in \hat{\ZZ}^n$ such that $w_jg_j\in \RR^k\times \scrB$ and $|s_{w_jg_j}|\leq |s_{wg_j}|$. Then by similar arguments as above we see that there exists some $w'\in \hat{\ZZ}^n$ such that $w_j=w'$ for infinitely many $j$. Taking $j\to\infty$ and using the assumption that $x\not\in \scrN$ we get $w'g\in \RR^k\times \scrB^{\circ}$ and $|s_{w'g}|\leq |s_{wg}|$. This then (using also that $wg\in \RR^k\times ( \scrW^{\circ}\cap (\scrB^c)^{\circ})$) shows that $w'\neq \pm w$.  
Now since $s_{wg}=\xi_1(x,1)$ and $w'g\in \RR^k\times \scrB^{\circ}\subset \RR^k\times \scrW$, we have $|s_{w'g}|=|s_{wg}|$, showing that $x\in \scrM$.  This finishes the proof that $\mu(\partial \scrE_{\scrD})=0$. Hence we have $\mu(\partial\scrE_{X,\scrD})=0$ and the theorem follows.
\end{proof}
From relation \eqref{convi} we see that the limit density in Corollary \ref{cor1} is related to $\psi(\xi,\omega)$ by
\begin{equation}\label{psi0intermsofpsi}
\psi_0(r) = k\,  \vol_{\RR^k}(\B_1^k)\,r^{k-1} 
\int_{\S_1^{k-1}} \int_{\Sigma(\scrW,1)} \psi(r \theta,\omega) \, d\nu(\omega) \, d\Omega(\theta).
\end{equation}
Here $\Omega$ is the uniform probability measure on the unit sphere $\S_1^{k-1}=\left\{s\in\RR^k: |s|=1\right\}$ in $\RR^k$,
which in the case of a general norm $|\cdot|$ may be defined as
the pushward measure of $\vol_{\RR^k}(\B_1^k)^{-1} \vol_{\RR^k}|_{\B_1^k\setminus\{0\}}$ 
under the map $\B_1^k\setminus\{0\}\to S_1^{k-1}$ sending $v$ to $v/|v|$.

\section{Entry times to a cuspidal neighbourhood}
\label{entrytimessec}

For any Borel subset $\mathcal{C}\subset \R^n$ set 
\begin{align}\label{equ:hitset}
\scrH(\mathcal{C}):=\left\{\Gamma g\in \scrX: {\hat{\ZZ}^ng}\cap  \mathcal{C} \neq \emptyset\right\}.
\end{align}
In this section we will study the hitting time statistics of $h_s(x)$ to the family of regions $\left\{\scrH(L^{-1}\mathcal{C})\right\}_{L>0}$ when $\mathcal{C}$ is of positive Lebesgue measure. Note that $\scrH(\scrW,L)=\scrH(L^{-1}(\{0\}\times \scrW))$. These regions are of positive $\mu$-measure, and the set 
\begin{align*}
\left\{s\in \R^k: h_s(x)\in \scrH(L^{-1}\mathcal{C})\right\}
\end{align*}
may not be discrete for generic $x\in \scrX$. Thus instead of studying the hitting time parameters directly as before, we consider the norm of these parameters: For any subset $\scrE\subset \scrX$, let
\begin{align}\label{def:hittime}
r_1(x,\scrE):=\inf\,\left\{|s|: h_s(x)\in \scrE\right\},
\end{align}
with the convention that $r_1(x,\scrE)=\infty$ if $h_s(x)\notin \scrE$ for all $s\in \RR^k$. 

Since any lattice is symmetric under reflection in the origin, we have $\scrH(\mathcal{C})=\scrH(\mathcal{C}\cup(-\mathcal{C}))$. Thus 
without loss generality, we may assume throughout this section that $\mathcal{C}$ is symmetric under reflection in the origin, i.e. $\mathcal{C}=-\mathcal{C}$.
We will also assume $\mathcal{C}\subset \RR^n$ is compact with non-empty interior and boundary of Lebesgue measure zero. On the other hand, we have that
\begin{align}\label{equ:hitrela}
r_1(x, \scrH(L^{-1}\mathcal{C}))=r_1(x, \scrH(L^{-1}\mathcal{C}^+))\quad \forall\,  L>0,\,  x\in \scrH(L^{-1}\scrC\cap \{y\in \RR^n: y_n=0\})^c,
\end{align}
where
\begin{align}\label{equ:Cpositive}
\mathcal{C}^+:=\mathcal{C}\cap \{y\in \R^n: y_n>0\}.
\end{align}
In order to verify \eqref{equ:hitrela}, it suffices to show for any $x=\Gamma g\in \scrH(L^{-1}\scrC\cap \{y\in \RR^n: y_n=0\})^c$ and $s\in \RR^k$, $h_s(x)\in \scrH(L^{-1}\scrC)$ if and only if $h_s(x)\in \scrH(L^{-1}\scrC^+)$. Clearly $h_s(x)\in \scrH(L^{-1}\scrC^+)$ implies $h_s(x)\in \scrH(L^{-1}\scrC)$. If $h_s(x)\in \scrH(L^{-1}\scrC)$, then there exists some $v\in \hat{\ZZ}^ng$ such that $vU(s)\in L^{-1}\scrC$. Now we must have $v_n\neq 0$, since otherwise $v = vU(s)\in \hat{\ZZ}^n g\cap L^{-1}\scrC\cap\{y_n=0\}$, contradicting our assumption on $x$. Hence, using also $\scrC=-\scrC$ , we
have either $vU(s)\in L^{-1}\scrC^+$  or $-vU(s)\in L^{-1}\scrC^+$ and so $h_s(x)\in \scrH (L^{-1}\scrC^+)$.


In view of \eqref{equ:hitrela} it is natural to also consider the family of sections $\left\{\scrH(L^{-1}\mathcal{C}^+)\right\}_{L>0}$.
Note that these sets can be parameterised as follows:
\begin{equation}\label{sectBBapp}
\scrH(L^{-1}\mathcal{C}^+) = \Gamma\backslash\Gamma H \big\{ M(y):  y\in  L^{-1}\mathcal{C}^+\big\} ,
\end{equation}
where
\begin{equation}\label{Myapp}
M(y) = \begin{pmatrix} y_n^{-(m+1)/k} 1_k & 0 & 0 \\ 0  &  y_n 1_m & 0 \\  y' & y''  & y_n \end{pmatrix}
\end{equation}
with $y=(y',y'',y_n)$. This choice of $M(y)$ satisfies $(0,1)M(y)=y$, so that the map in \eqref{My0} gives a parametrization of $G$, and we have $M(0,w)=R(w)$ and, more generally,
\begin{equation}\label{thenapp}
M(y',y'',y_n)=R(y'',y_n) U(-y_n^{-1} y'). 
\end{equation}

The main result of this section is the following analogue of Corollary \ref{cor1} regarding the  hitting times to the family of sections $\left\{\scrH(L^{-1}\mathcal{C}))\right\}_{L>0}$. 
\begin{cor}\label{cor3app}
Let $\lambda$ be an admissible probability measure on $\scrX$. 
 Let $\mathcal{C}\subset \RR^n$ be a compact set with non-empty interior and boundary of Lebesgue measure zero,
satisfying $\scrC=-\scrC$. 
Then for any $X>0$, 
\begin{equation}\label{equ:cuspapp}
\lim_{L\to\infty}
\lambda\left(\left\{ x\in\scrX : L^{-n/k} r_1(x,\scrH(L^{-1}\mathcal{C}))>X \right\}\right)  =  \int_X^\infty \psi_0(r)\, dr,
\end{equation}
with the probability density $\psi_0\in\L^1(\RR_{>0})$  as in Corollary \ref{cor1} and  with $\scrW=\mathrm{pr}(\mathcal{C}^+)$, where $\mathrm{pr}: \RR^n\to \RR^{m+1}$ is as in Theorem \ref{mainthm} and $\scrC^+$ is as in \eqref{equ:Cpositive}.
\end{cor}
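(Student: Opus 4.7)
The plan is to convert the cusp hitting event into a lattice-point-avoidance event via the scaling $\varphi_t$, and then to apply admissibility of $\lambda$ after sandwiching the $L$-dependent avoidance set between two fixed ones.

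\textbf{Reduction and scaling.} Since the lattice is symmetric and $\scrC=-\scrC$, we can write $\scrH(L^{-1}\scrC)=\scrH(L^{-1}\scrC^+)\cup\scrH(L^{-1}\scrC^0)$ with $\scrC^0:=\scrC\cap\{y_n=0\}$; moreover $\scrH(L^{-1}\scrC^0)$ is $h_s$-invariant (the mechanism behind \eqref{equ:hitrela}), and on this set $r_1(x,\scrH(L^{-1}\scrC))=0$. Since primitive vectors are nonzero, $\scrH(L^{-1}\scrC^0)$ is nested decreasing in $L$ with empty intersection, so $\lambda(\scrH(L^{-1}\scrC^0))\to 0$, and it suffices to prove \eqref{equ:cuspapp} with $\scrC^+$ in place of $\scrC$. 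Setting $t=(\log L)/k$, the identity $\varphi_{-t}\scrH(\scrA)=\scrH(\scrA\Phi(-t))$ combined with \eqref{commute} yields $L^{-n/k}r_1(x,\scrH(L^{-1}\scrC^+))=r_1(\varphi_{-t}(x),\scrH(\scrC_L^+))$ with $\scrC_L^+=\{(L^{-n/k}y',y'',y_n):y\in\scrC^+\}$; equivalently, writing $\scrK_L:=\bigcup_{|s|\leq X}\scrC_L^+U(-s)=\{(L^{-n/k}y'+y_ns,y'',y_n):y\in\scrC^+,\,|s|\leq X\}$,
\begin{equation*}
\{x\in\scrX: L^{-n/k}r_1(x,\scrH(L^{-1}\scrC^+))>X\}=\varphi_t\,\scrH(\scrK_L)^c.
\end{equation*}
As $L\to\infty$ the set $\scrK_L$ collapses in the first $k$ coordinates to $\scrK_\infty:=\scrC(\B_X^k,\scrW)$ of \eqref{scrC}; by \eqref{U2}, $\scrH(\scrK_\infty)^c=\{y:|\xi_1(y,1)|>X\}$ has $\mu$-measure $\int_X^\infty\psi_0(r)\,dr$ (Corollary \ref{cor1} with $\lambda=\mu$).

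\textbf{Sandwich and admissibility.} For small $\delta>0$, define
\begin{align*}
\scrK_\delta^+&:=\{(y_ns+z,y'',y_n):(y'',y_n)\in\scrW,\,|s|\leq X,\,|z|\leq\delta\},\\
\scrK_\delta^-&:=\{(y_ns,y'',y_n):(y'',y_n)\in\scrW_\delta,\,|s|\leq X-\delta\},
\end{align*}
where $\scrW_\delta\subset\scrW\cap\{y_n\geq\delta\}$ is a compact set with null boundary and $\scrW_\delta\nearrow\scrW$ as $\delta\to 0$. For $L\geq L_0(\delta)$ one verifies directly that $\scrK_\delta^-\subset\scrK_L\subset\scrK_\delta^+$: the upper inclusion uses $L^{-n/k}\sup_{y\in\scrC^+}|y'|<\delta$, while the lower inclusion uses $y_n\geq\delta$ to absorb the discrepancy $L^{-n/k}y'/y_n$ into the $X$-ball of shifts. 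Both $\partial\scrK_\delta^\pm$ have null Lebesgue measure in $\RR^n$, so Siegel's formula \cite{Siegel1945} gives $\mu(\partial\scrH(\scrK_\delta^\pm))=0$, and admissibility of $\lambda$ yields $\lim_{t\to\infty}\lambda(\varphi_t\scrH(\scrK_\delta^\pm)^c)=\mu(\scrH(\scrK_\delta^\pm)^c)$. Sandwiching and then sending $\delta\to 0$ via monotone convergence (together with $\mu(\scrH(\partial\scrK_\infty))=0$) concludes the proof.

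The main delicacy lies in constructing the thinning $\scrK_\delta^-$: because $\scrW=\mathrm{pr}(\scrC^+)$ may extend down to the hyperplane $\{y_n=0\}$, the correction $L^{-n/k}|y'|/y_n$ fails to be uniformly small. Restricting $\scrW_\delta$ to $\{y_n\geq\delta\}$ fixes this, and the excluded sliver $\scrW\setminus\scrW_\delta$ is handled as $\delta\to 0$ since $\scrW\subset\RR^m\times\RR_{>0}$.
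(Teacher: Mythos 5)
Your proof takes a genuinely different route from the paper's. The paper proves the upper bound of \eqref{equ:cuspapp} via the impact statistics of Theorem \ref{cor1B} (with a $\delta$-thinning of the section) and the lower bound via the auxiliary families $\scrA_t$, $\widetilde\scrA_t$, Lemma \ref{lem:hittingapp} and Lemma \ref{lem:relhitapp}, together with a monotone-convergence argument. Your proposal instead identifies the event as a single lattice-avoidance event $\varphi_t\scrH(\scrK_L)^c$ and sandwiches the forbidden set $\scrK_L$ between two $L$-independent sets $\scrK_\delta^\pm$, letting admissibility do the rest in one pass. This is more economical: it bypasses the impact statistics of Section \ref{impactstatSEC} entirely and does not need the two-sided machinery around $\widetilde\scrA_t$. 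The inclusions $\scrK_\delta^-\subset\scrK_L\subset\scrK_\delta^+$ for $L\geq L_0(\delta)$ check out, the identification $\bigcap_\delta\scrK_\delta^+=\scrK_\infty=\scrC(\B_X^k,\scrW)$ and $\vol(\scrK_\infty\setminus\bigcup_\delta\scrK_\delta^-)=0$ are correct, and the sandwich passes to the limit as claimed.

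There are, however, three places where the proposal is incomplete as written. First, the asserted \emph{equality}
$\{x : L^{-n/k}r_1(x,\scrH(L^{-1}\scrC^+))>X\} = \varphi_t\scrH(\scrK_L)^c$
is not automatic: the inclusion ``$\supset$'' requires that the infimum defining $r_1$ is \emph{attained} (otherwise one only gets $r_1\geq X$, not $r_1>X$, from avoidance of $\scrK_L$). This is exactly what Lemma \ref{lem:hittingapp} provides (applied to $\scrC_L^+ = \scrB_L\cap\{y_n>0\}$ with $\scrB_L$ compact); you need to invoke it, or else replace the asserted equality by the two inclusions
$\{r_1>X\}\subset\varphi_t\scrH(\scrK_L)^c\subset\{r_1\geq X\}$
and handle the discrepancy by continuity of $X\mapsto\int_X^\infty\psi_0$. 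Second, the claim that $\scrH(L^{-1}\scrC^0)$ is ``nested decreasing in $L$'' is false in general: $L^{-1}\scrC^0$ need not be decreasing unless $\scrC^0$ is star-shaped at the origin. The fix (as in the paper) is to enclose $\scrC_0$ in a star-shaped set such as a box $\scrB_0=[-R,R]^{n-1}\times\{0\}$ and argue with $\scrH(L^{-1}\scrB_0)$ instead; after that, your continuity-from-above argument gives $\lambda(\scrH(L^{-1}\scrC_0))\to 0$ directly (in fact this is even simpler than the admissibility-based argument in the paper). Third, the assertion $\mu(\partial\scrH(\scrK_\delta^\pm))=0$ via Siegel's formula needs the containment $\partial\scrH(\scrK_\delta^\pm)\subset\{\Gamma g : \hat\ZZ^n g\cap\partial\scrK_\delta^\pm\neq\emptyset\}$, which requires the compactness/finiteness argument carried out in the proof of Theorem \ref{thm2var}; this is routine but should be stated. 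All three gaps are fixable without changing the strategy.
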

The proof of \eqref{equ:cuspapp} naturally divides into two parts, namely, the upper and lower bounds. We will first show that we may replace $r_1(x,\scrH(L^{-1}\mathcal{C}))$ by $r_1(x,\scrH(L^{-1}\mathcal{C}^+))$ in the right hand side of \eqref{equ:cuspapp}. Then the general strategy is to use \eqref{thenapp} to relate $r_1(x,\scrH(L^{-1}\mathcal{C}^+))$ to the entry times for the sections $\scrH(\scrW,L)$ with $\scrW={{\mathrm{pr}(\mathcal{C}^{+})}}$ as above. For the upper bound, we follow the strategy in \cite[Cor.~2.1]{MP25} using results on the impact statistics developed in last section. For the lower bound, due to some difficulties occurring in the current higher rank situation, the approach in \cite{MP25} does not apply. Instead we use a more direct approach applying the conjugation relation in \eqref{commute} directly to the hitting times for $\scrH(L^{-1}\mathcal{C}^+)$

\subsection{Preliminaries on entry times}
We first show that if $r_1(x,\scrH(L^{-1}\mathcal{C}^+))<\infty$, then there must exist some $s\in \RR^k$ such that $h_s(x)\in \scrH(L^{-1}\mathcal{C}^+)$ and $|s|=r_1(x,\scrH(L^{-1}\mathcal{C}^+))$.
\begin{lem}\label{lem:hittingapp}
Assume $\mathcal{B}\subset \R^n$ is compact and let $\mathcal{B}^+=\mathcal{B}\cap \{y\in \R^n: y_n>0\}$. 
For any $x\in \scrX$ such that $r_1(x,\scrH(\mathcal{B}^+))<\infty$, there exists some $s\in \R^k$ satisfying $h_s(x)\in \scrH(\mathcal{B}^+)$ and $|s|=r_1(x,\scrH(\mathcal{B}^+))$.  
 \end{lem}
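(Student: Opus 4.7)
The plan is to extract a convergent subsequence from a minimizing sequence, taking advantage of the discreteness of $\hatZZ^n g$ to reduce to a constant choice of lattice vector. Write $x = \Gamma g$ for some $g \in G$, and set $r := r_1(x, \scrH(\scrB^+))$, which is finite by assumption. I would begin by choosing $s_j \in \RR^k$ with $|s_j| \to r$ and $h_{s_j}(x) \in \scrH(\scrB^+)$, and for each $j$ selecting $v_j \in \hatZZ^n g$ satisfying $v_j U(s_j) \in \scrB^+$.

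The key step is a compactness–discreteness argument showing that $\{v_j\}$ may be taken constant along a subsequence. Writing $v_j = (v_j', v_j'', v_{j,n}) \in \RR^k \times \RR^m \times \RR$, a direct computation from the definition \eqref{equ:unipotentact} of $U(s)$ gives
\[
v_j U(s_j) = (v_j' - v_{j,n} s_j,\; v_j'',\; v_{j,n}).
\]
Compactness of $\scrB$ bounds each of the three components of this vector in the appropriate Euclidean space; in particular $v_{j,n}$ lies in a bounded positive interval. Since $|s_j|$ is bounded (as $|s_j| \to r < \infty$ and all norms on $\RR^k$ are equivalent), the identity $v_j' = (v_j' - v_{j,n} s_j) + v_{j,n} s_j$ then bounds $v_j'$ as well, so $\{v_j\}$ is contained in a bounded subset of $\RR^n$. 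Since $g \in G$ is invertible, $\hatZZ^n g$ is discrete in $\RR^n$, and hence only finitely many values of $v_j$ occur. Passing to a subsequence, I may assume $v_j = v^*$ is constant, with in particular $v_n^* > 0$ fixed.

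With $v^*$ fixed, I pass to a further subsequence with $s_j \to s^* \in \RR^k$ (possible since $\{s_j\}$ is bounded). Continuity of $s \mapsto v^* U(s)$ gives $v^* U(s^*) = \lim_j v^* U(s_j)$, and $|s^*| = r$. Each $v^* U(s_j)$ lies in $\scrB$, which is closed, so the limit lies in $\scrB$; and its last coordinate equals the fixed positive number $v_n^*$, so in fact it lies in $\scrB \cap \{y_n > 0\} = \scrB^+$. Hence $h_{s^*}(x) \in \scrH(\scrB^+)$ with $|s^*| = r$, as required.

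The main subtlety to watch out for is that $\scrB^+$ need not be closed in $\RR^n$: its boundary in general includes points on the hyperplane $\{y_n = 0\}$, and a naive limit of points in $\scrB^+$ could escape into that hyperplane. The passage to a constant subsequence $v_j = v^*$ neutralizes this obstacle, because the last coordinate of $v^* U(s_j)$ is independent of $j$ and equals $v_n^* > 0$, so the strict inequality is automatically preserved in the limit.
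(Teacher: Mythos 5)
Your proof is correct and uses essentially the same strategy as the paper: extract a minimizing sequence, use compactness of $\scrB$ together with discreteness of $\hatZZ^n g$ to pass to a constant lattice vector $v^*$, and observe that the last coordinate of $v^*U(s)$ is independent of $s$ and positive, so the limit stays in $\scrB^+$ rather than escaping into $\{y_n=0\}$. The only cosmetic difference is the order of operations — you bound $\{v_j\}$ directly and invoke discreteness first, whereas the paper first extracts limits $s_l\to s$ and $v_lU(s_l)\to w$ and then deduces eventual constancy of $v_l$ from the convergence $v_l\to wU(-s)$.
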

 \begin{proof}
Assume $x=\Gamma g$. By definition, there exist $\{s_l\}\subset \RR^k$ satisfying $h_{s_l}(x)\in \scrH(\mathcal{B}^+)$ and $|s_l|\searrow r_1(x, \scrH(\mathcal{B}^+))$. By passing to a subsequence, we may assume there exists some $s\in \RR^k$ such that $s_l\to s$; hence $|s|=r_1(x, \scrH(\mathcal{B}^+))$. We will show $h_s(x)\in \scrH(\mathcal{B}^+)$. Since $h_{s_l}(x)\in \scrH(\mathcal{B}^+)$, there exists some  vector $v_l\in {\hat{\ZZ}^ng}$ such that $v_lU(s_l)\in \mathcal{B}^+$. By compactness of $\mathcal{B}$, by passing to a subsequence we may assume there exists some $w\in \mathcal{B}\cap \{y\in \R^n: y_n\geq 0\}$ such that $v_lU(s_l)\to w$. 
Hence $\lim_{l\to\infty}v_l=\lim_{l\to\infty} wU(-s_l)=wU(-s)$. By discreteness of ${\hat{\ZZ}^ng}$, $wU(-s)=v_l$ for all $l$ sufficiently large, i.e. $v:=wU(-s)\in {\hat{\ZZ}^ng}$. Hence 
$$
w=vU(s)\in \mathcal{B}\cap \{y\in \R^n: y_n\geq 0\}\cap {\hat{\ZZ}^ng}U(s).
$$
This shows that $h_s(x)\in \scrH(\mathcal{B}^+)$ unless $w_n=0$. But if $w_n=0$ then $v=wU(-s)=w$; this would imply that $v_l=w$ for all sufficiently large $l$, and we would then get a contradiction against the fact that $v_{l,n}>0$, which holds since $v_lU(s_l)\in \scrB^+$ and $v_l$ and $v_lU(s_l)$ have the same $n$-th coordinate. Hence $w_n=0$ cannot hold, and the lemma is proved. 
 \end{proof}

We now start to deduce relations between entry times to $\scrH(L^{-1}\mathcal{C}^+)$ and $\scrH(\scrW,L)$ with  $\scrW=\mathrm{pr}(\mathcal{C}^{+})$. Note that $w\in \scrW$ if and only if there exists some ${u_w}\in \RR^k$ such that $(u_w,w)\in \mathcal{C}^{+}$. 
This, together with \eqref{thenapp}, implies that for any $x\in \scrX$ and $L>0$, $\xi_1(x,L)$ exists if and only if $r_1(x, \scrH(L^{-1}\mathcal{C}^+))<\infty$. 
Moreover, when they both exist (which holds for almost every $x\in \scrX$) we may write
\begin{align}\label{equ:1sthitsapp}
 h_{\xi_1(x,L)}(x)=\Gamma h R(w),
\end{align}
for some $h\in \scrF_H$ and $w\in  L^{-1}\scrW$, i.e. $(u_w,w)\in  L^{-1}\mathcal{C}^+$ for some $u_w\in \RR^k$.  Set $\delta:=-w_{m+1}^{-1}u_{w}$ 
Then by \eqref{thenapp} we have $h_{\xi_1(x,L)+\delta}(x)\in \scrH(L^{-1}\mathcal{C}^+)$. This implies that 
\begin{align}\label{bo3app}
r_1(x, \scrH(L^{-1}\mathcal{C}^+))\leq |\xi_1(x,L)+\delta|\leq |\xi_1(x,L)|+w_{m+1}^{-1}|u_w|.
\end{align}

Using the parameterisation \eqref{paraL} for $\scrH(\scrW,L)$, we may parameterise the impact parameters as follows:
\begin{align*}
\omega_j(x,L)= h_j R(w^j(x,L))\in\Sigma(\scrW,\,1)
\end{align*}
where $ h_j\in \scrF_H$ and $w^j(x,L)=(w^j_1(x,L),\cdots, w^j_{m+1}(x,L))\in \RR^m\times \RR_{>0}$ satisfies
\begin{align*}
h_{\xi_j(x,L)}(x)=\Gamma h_jR(L^{-1}w^j(x,L))\in \scrH(\scrW,L). 
\end{align*}
Keep the notation in \eqref{equ:1sthitsapp}. 
Let 
$$
C=C(\mathcal{C}^+):=\sup\left\{|u|: \exists\, w\in \RR^{m+1}\,\text{such that}\, (u,w)\in \mathcal{C}^+\right\}.
$$
Note that $0<C<\infty$ since $\mathcal{C}=-\scrC$ is compact and has non-empty interior.
Then  \eqref{bo3app} and the relation $w_{m+1}=L^{-1}w^1_{m+1}(x,L)$ imply that 
\begin{align}\label{bo3capp}
r_1(x, \scrH(L^{-1}\mathcal{C}^+))- |\xi_1(x,L)|\leq w_{m+1}^{-1}|u_w|\leq  C\big/w^1_{m+1}(x,L).
\end{align}
We will use this relation to prove the upper bound in \eqref{equ:cuspapp}. 

On the other hand, by Lemma \ref{lem:hittingapp} there exists some $s_0\in \RR^k$ satisfying $h_{s_0}(x)\in \scrH(L^{-1}\mathcal{C}^+)$ and $|s_0|=r_1(x,\scrH(L^{-1}\mathcal{C}^+))$. Write $h_{s_0}(x)=\Gamma hM(y)$ for some $h\in \scrF_H$ and $y=(y',y'',y_n)\in L^{-1}\mathcal{C}^+$. Then similarly, we have by \eqref{thenapp} that $h_{s_0+\delta}(x)\in \scrH(\scrW,L)$ with $\delta:=y_n^{-1}y'$. However, unlike the rank one case considered in \cite{MP25}, 
this hit does not necessarily correspond to the first hit of $h_s(x)$ 
to $\scrH(\scrW,L)$.  For this reason, we adapt a more direct approach to prove the lower bound in \eqref{equ:cuspapp}. We apply the conjugation relation in \eqref{commute} for $\scrH(L^{-1}\mathcal{C}^+)$ and consider entry times to the sets $\varphi_{-t}\scrH(L^{-1}\mathcal{C}^+)$. 
Note that similar to \eqref{secscale2}, for any Borel subset $\scrE\subset \scrX$, $x\in \scrX$ and $t\in \RR$, \eqref{commute} implies 
\begin{align}\label{equ:bo4capp}
r_1(x,\scrE)= e^{nt}r_1(\varphi_{-t}(x),\varphi_{-t}\scrE).
\end{align}
Moreover,  for any $x=\Gamma g\in \scrX$ and $t\in \R$,  
$$
x\in \varphi_{-t}\scrH(e^{-kt}\mathcal{C}^+)\quad \Leftrightarrow \quad \exists\, v=(v',v'',v_n)\in {\hat{\ZZ}^ng}\,\ :\, \ 
(e^{nt}v',v'',v_n)\in \mathcal{C}^{+}.
$$
That is, 
\begin{align}\label{equ:conrelaapp}
\varphi_{-t}\scrH(e^{-kt}\mathcal{C}^+)=\scrH(\scrA_t),\quad \text{with $\scrA_t:=\mathcal{C}^{+}\left(\begin{smallmatrix}
e^{-nt}I_k & &\\
 & I_m & \\
 & & 1\end{smallmatrix}\right)$}.
\end{align}
We also consider entry times to another family of sets $\scrH(\widetilde{\scrA}_t)$, where
$$
\widetilde{\scrA}_t:=\widetilde{\mathcal{C}}^{+}\left(\begin{smallmatrix}
e^{-nt}I_k & &\\
 & I_m & \\
 & & 1\end{smallmatrix}\right),\quad \text{with $\widetilde{\mathcal{C}}^{+}:=\left\{(\delta v', v'', v_n): (v', v'', v_n)\in \mathcal{C}^{+},\, 0\leq \delta\leq 1\right\}$ }.
$$
Note that $x=\Gamma g\in \scrH(\widetilde{\scrA}_t)$ if and only if there exists some  $v=(v',v'',v_n)\in {\hat{\ZZ}^ng}$ 
such that 
$
(e^{nt} v', v'', v_n)\in \widetilde{\mathcal{C}}^{+}.
$
Moreover, note that 
$$
\mathrm{pr}(\widetilde{\mathcal{C}}^{+})=\mathrm{pr}({\mathcal{C}}^{+})=\scrW,\quad \scrH(\scrW,1)=\scrH(0\times \scrW),
$$
and for any $w=(w'', w_n)\in \scrW$ and $u\in \RR^k$, 
\begin{align*}
(0,w'', w_n)U(-w_n^{-1}u)=(u, w'', w_n). 
\end{align*}
This shows that if $\xi_1(x,1)$ exists, then $r_1(x, \scrH(\scrA_t))$ and $r_1(x,\scrH(\widetilde{\scrA}_t))$ are finite for all $t\in \RR$. And similarly, if $r_1(x, \scrH(\scrA_t))$ or $r_1(x,\scrH(\widetilde{\scrA}_t))$ is finite for some $t\in \RR$, then $\xi_1(x,1)$ also exists. 
Next, note that $\scrH(\scrA_t)\subset \scrH(\widetilde{\scrA}_t)$ and $\scrH(\scrW,1)\subset\scrH(\widetilde{\scrA}_t)$. 
This implies for any $x\in \scrX$ such that $\xi_1(x,1)$ exists, 
\begin{align}\label{equ:apbd1app}
r_1(x,\scrH(\widetilde{\scrA}_t))\leq \min\bigl\{ r_1(x, \scrH(\scrA_t)), |\xi_1(x,1)|\bigr\}.
\end{align}
Finally, note that $\scrH(\widetilde{\scrA}_t)$ is decreasing in $t$, thus $r_1(x,\scrH(\widetilde{\scrA}_t))$ is increasing in $t$. 
We prove the following limiting relation between $r_1(x,\scrH(\widetilde{\scrA}_t))$ and $\xi_1(x,1)$. 
\begin{lem}\label{lem:relhitapp}
For any $x\in \scrX$ such that $\xi_1(x,1)$ exists, 
\begin{align}\label{equ:bo5capp}
\lim_{t\to\infty}r_1(x,\scrH(\widetilde{\scrA}_t))
=|\xi_1(x,1)|. 
\end{align}
\end{lem}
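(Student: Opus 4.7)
The plan is to prove the two inequalities separately, exploiting the monotonicity of $t\mapsto r_1(x,\scrH(\widetilde{\scrA}_t))$ already noted before the statement of the lemma, so that the limit $L:=\lim_{t\to\infty}r_1(x,\scrH(\widetilde{\scrA}_t))$ exists in $[0,\infty]$. The upper bound $L\leq|\xi_1(x,1)|$ is immediate from \eqref{equ:apbd1app}; the substance of the proof lies in the lower bound $L\geq|\xi_1(x,1)|$.

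The strategy for the lower bound is to approximate the infimum defining $r_1(x,\scrH(\widetilde{\scrA}_t))$ and then use discreteness of $\hat{\ZZ}^ng$ (with $x=\Gamma g$) to extract, along a subsequence, a genuine hit of $h_s(x)$ on $\scrH(\scrW,1)$. Concretely, for each integer $t\geq 1$ I would pick $s_t\in\RR^k$ with $h_{s_t}(x)\in\scrH(\widetilde{\scrA}_t)$ and $|s_t|\leq r_1(x,\scrH(\widetilde{\scrA}_t))+1/t$, so that $|s_t|\to L$. Unpacking the definitions of $\widetilde{\scrA}_t$ and $\widetilde{\mathcal{C}}^+$, this provides $v_t=(v_t',v_t'',v_{t,n})\in\hat{\ZZ}^ng$, a triple $(u_t,w_t'',w_{t,n})\in\mathcal{C}^+$ and a scalar $\delta_t\in[0,1]$ with
\begin{equation}
v_t U(s_t)=(\delta_t e^{-nt}u_t,\,w_t'',\,w_{t,n}),
\end{equation}
i.e., $v_{t,n}=w_{t,n}$, $v_t''=w_t''$ and $v_t'-v_{t,n}s_t=\delta_t e^{-nt}u_t$.

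Compactness of $\mathcal{C}$ bounds $u_t$, $w_t''$ and $w_{t,n}$ uniformly in $t$, and $|s_t|\leq L+1$ is bounded as well, hence $\{v_t\}$ is contained in a fixed bounded region of $\RR^n$. By discreteness of $\hat{\ZZ}^ng$, only finitely many vectors $v_t$ occur, so after passing to a subsequence I may assume $v_t=v$ is constant, with $(v'',v_n)=(w_t'',w_{t,n})\in\mathrm{pr}(\mathcal{C}^+)=\scrW$; in particular $v_n>0$. Along this subsequence the relation $v'-v_n s_t=\delta_t e^{-nt}u_t\to 0$ forces $s_t\to s:=v'/v_n$, so $|s|=L$, and $vU(s)=(0,v'',v_n)\in\{0\}\times\scrW$ gives $h_s(x)\in\scrH(\scrW,1)$. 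By \eqref{U1}, $s$ must coincide with one of the hitting times $\xi_j(x,1)$, yielding $L=|s|\geq |\xi_1(x,1)|$, as required.

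The principal obstacle is that $\widetilde{\mathcal{C}}^+$ need not be closed at the hyperplane $\{y_n=0\}$, so Lemma \ref{lem:hittingapp} does not apply directly to conclude that the infimum defining $r_1(x,\scrH(\widetilde{\scrA}_t))$ is attained. The $1/t$-approximation, combined with lattice discreteness (which stabilises $v_t$ along a subsequence), is what replaces that lemma and pins down the limiting value as a genuine hitting time for the section $\scrH(\scrW,1)$.
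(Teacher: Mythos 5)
Your argument is correct, but your stated motivation contains a factual error that is worth flagging. You claim that Lemma \ref{lem:hittingapp} does not apply because $\widetilde{\mathcal{C}}^+$ is not closed at $\{y_n=0\}$; in fact, Lemma \ref{lem:hittingapp} is designed precisely for sets of the form $\mathcal{B}\cap\{y_n>0\}$ with $\mathcal{B}$ compact, and $\widetilde{\scrA}_t$ \emph{is} of that form. Writing $\mathcal{B}_t:=\{(e^{-nt}\delta v',v'',v_n):(v',v'',v_n)\in\mathcal{C},\ 0\leq\delta\leq1\}$ (which is compact), one has $\widetilde{\scrA}_t=\mathcal{B}_t\cap\{y_n>0\}$, so Lemma \ref{lem:hittingapp} applies directly and this is exactly what the paper does. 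Your $1/t$-approximation therefore reproves a special case of that lemma rather than circumventing a genuine obstacle.

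Apart from that, your route and the paper's diverge in a way that is worth noting. Both proofs share the core step (boundedness of $\{v_t\}$ from $|s_t|$ bounded plus compactness of $\mathcal{C}$, hence finitely many lattice vectors). But the paper, having exact minimizers $s_t$ from Lemma \ref{lem:hittingapp}, works uniformly in $t$: it extracts a lower bound $v_{n,t}>\delta_x$ and a quantitative estimate $|\ell_t|\leq Ce^{-nt}$ for the offset $\ell_t=v_t'-s_tv_{n,t}$, and concludes $|\xi_1(x,1)|\leq|s_t|+C\delta_x^{-1}e^{-nt}$ for every $t$. You instead pass to a subsequence along which $v_t\equiv v$, obtain $s_t\to s=v'/v_n$, and identify $s$ as a genuine hitting time of $\scrH(\scrW,1)$; this gives $L=|s|\geq|\xi_1(x,1)|$. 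Your version is a bit more self-contained (it bundles the conclusion of Lemma \ref{lem:hittingapp} into the subsequence argument) but sacrifices the explicit rate; the paper's version gives an effective $O(e^{-nt})$ comparison, which is consistent with the spirit of the surrounding estimates.
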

\begin{remark}
Using similar arguments we can also show 
$
\lim_{t\to\infty}r_1(x, \scrH(\scrA_t))=|\xi_1(x,1)|.
$
However, we will not need this fact for our later proofs. 
\end{remark}
\begin{proof}[Proof of Lemma \ref{lem:relhitapp}] 
Take $x= \Gamma g\in \scrX$ such that $\xi_1(x,1)$ exists. 
Then again by the above discussion, $r_1(x,\scrH(\widetilde{\scrA}_t))$ exist for all $t\in \RR$. Note that 
$\widetilde{\scrA}_t=\mathcal{B}_t\cap \{y\in \RR^n: y_n>0\}$ with the compact set 
\begin{align}\label{equ:cpttilde}
\mathcal{B}_t:=\left\{(e^{-nt}\delta v',v'', v_n): (v',v'',v_n)\in\mathcal{C},\, 0\leq \delta\leq 1\right\}. 
\end{align}
Thus Lemma \ref{lem:hittingapp} applies to $\scrH(\widetilde{\scrA}_t)$, showing that there exists some $s_t\in \RR^k$ satisfying $|s_t|=r_1(x,\scrH(\widetilde{\scrA}_t))$ and $h_{s_t}(x)\in \scrH(\widetilde{\scrA}_t)$.
Since $|s_t|\leq |\xi_1(x,1)|$ (cf. \eqref{equ:apbd1app}) and $|s_t|$ is increasing in $t$, it suffices to show $\lim_{t\to\infty}|s_t|\geq |\xi_1(x,1)|$. By definition we have that 
for any $t>0$, there exists some  $v_t=(v_t', v_t'', v_{n,t})\in {\hat{\ZZ}^ng}$ such that 
\begin{align}\label{equ:phitcond2app}
(e^{nt}(v_t'-s_tv_{n,t}), v_t'', v_{n,t})\in \widetilde{\mathcal{C}}^{+}.
\end{align}
Since $\{|s_t|\}$ is uniformly bounded from above by $|\xi_1(x,1)|$ and $\widetilde{\mathcal{C}}^{+}$ is bounded, \eqref{equ:phitcond2app} implies that $\{v_t\}\subset {\hat{\ZZ}^ng}$ is uniformly bounded; in particular, it is a finite set. Thus there exists some $\delta_x>0$ such that 
$v_{n,t}>\delta_x$ for all $t>0$. Write $\ell_t:=v_t'-s_tv_{n,t}$. 
Note that \eqref{equ:phitcond2app} also implies that $(v_t'', v_{n,t})\in \scrW$ (recall $\mathrm{pr}(\widetilde{\mathcal{C}}^{+})=\scrW$) and {{if $\ell_t\neq 0$ then there exists some $0< \delta\leq 1$ such that $(\delta^{-1}e^{nt}\ell_t, v_t'', v_{n,t})\in \mathcal{C}^+$. Thus 
$
\delta^{-1}e^{nt}|\ell_t|\leq C.
$ 
This shows that 
$$
|\ell_t|\leq C \delta e^{-nt}\leq Ce^{-nt}
$$ 
if $\ell_t\neq 0$. This estimate clearly also holds if $\ell_t=0$. }}
We thus get $h_{v_{n,t}^{-1}\ell_t+s_t}(x)\in \scrH(\scrW,1)$, and
$$
|\xi_1(x,1)|\leq |s_t|+v_{n,t}^{-1}|\ell_t|\leq |s_t|+C\delta_x^{-1}e^{-nt}.
$$
This shows that $|\xi_1(x,1)|\leq \lim_{t\to\infty}|s_t|$ as desired. 
\end{proof}


\subsection{Proof of Corollary \ref*{cor3app}}
In this section we collect results from the previous section to prove Corollary \ref{cor3app}. 
\begin{proof}
Let $\scrC_0:=\scrC\cap \{y\in \RR^n: y_n=0\}$.  
 By \eqref{equ:hitrela} we have for any $L>0$,
\begin{align*}
0\leq \lambda(\{ x\in\scrX : L^{-n/k} r_1(x,\scrH(L^{-1}\mathcal{C}^+))>X \} )- \lambda(\{ x\in\scrX : L^{-n/k} r_1(x,\scrH(L^{-1}\mathcal{C}))>X\})  \leq \lambda(\scrH(L^{-1}\scrC_0)). 
\end{align*}
Now fix $R>0$ so large such that the box $\scrB_0:=[-R,R]^{n-1}\times \{0\}$ contains $\scrC_0$. Note that $\scrB_0\subset \scrB_0\diag(\lambda_1,\cdots, \lambda_n)$ for any $\lambda_1,\cdots, \lambda_n\geq 1$. In particular, for any $L\geq 1$, taking $t\geq 0$ such that $e^{kt}=L$ we have $L^{-1}\scrB_0\subset \scrB_0\Phi(t)$. Thus 
\begin{align*}
\limsup_{L\to\infty}\lambda(\scrH(L^{-1}\scrC_0))&\leq \limsup_{L\to\infty}\lambda(\scrH(L^{-1}\scrB_0))\leq \limsup_{t\to\infty}\lambda(\scrH(\scrB_0\Phi(t)))\\
&=\limsup_{t\to\infty}\lambda(\varphi_t\scrH(\scrB_0))\leq \mu(\scrH(\scrB_0))=0.
\end{align*}
Here for the last inequality we applied the fact that $\scrH(\scrB_0)\subset \scrX$ is closed (since $\scrB_0$ is compact) and the implication ``(1)$\Rightarrow$ (2)" in Remark \ref{rmk:port}. 
Therefore, it suffices to show
\begin{align}\label{equ:cuspapp+}
\lim_{L\to\infty}\lambda\left(\left\{ x\in\scrX : L^{-n/k} r_1(x,\scrH(L^{-1}\mathcal{C}^+))>X \right\}\right)  =  \int_X^\infty \psi_0(r)\, dr.
\end{align}
By \eqref{bo3capp} we have for any $0<\delta\leq 1$,
\begin{align*}
&\left\{ x : L^{-n/k} r_1(x,\scrH(L^{-1}\mathcal{C}^+)) > X, \, w_{m+1}^1(x,L)>\delta \right\}\\
&\subset \left\{ x : L^{-n/k} |\xi_1(x,L)| > X-CL^{-n/k}\delta^{-1},\, w_{m+1}^1(x,L)>\delta \right\}.
\end{align*}
Hence  $\left\{ x : L^{-n/k} r_1(x,\scrH(L^{-1}\mathcal{C}^+)) > X \right\}$ is a subset of
\begin{align*}
 \left\{ x : L^{-n/k} |\xi_1(x,L)| > X-CL^{-n/k}\delta^{-1},\, w_{m+1}^1(x,L)>\delta \right\}\cup \left\{x : w_{m+1}^1(x,L)\leq \delta\right\}.
\end{align*}
Therefore, by Theorem \ref{cor1B} we have for any $0<\delta\leq 1$,
\begin{align*}
&\limsup_{L\to\infty}\lambda\left(\left\{ x\in\scrX : L^{-n/k} r_1(x,\scrH(L^{-1}\mathcal{C}^+))>X \right\}\right)\\
 &\leq \int_{(\B_X^k)^\mathrm{c}} \int_{\scrD_\delta} \psi(\xi,\omega)  d\nu(\omega) d\xi+\int_{\RR^k} \int_{(\scrD_\delta)^c} \psi(\xi,\omega)  d\nu(\omega) d\xi,
\end{align*}
where
\begin{align*}
\scrD_{\delta} := \scrF_H \big\{ R(w) :  w \in \scrW,\, w_{m+1}>\delta  \big\} \subset \Sigma( \scrW,1),
\end{align*}
and where we used the fact that the integral $\int_{(\B_X^k)^\mathrm{c}} \int_{\scrD_\delta} \psi(\xi,\omega)  d\nu(\omega) d\xi$ depends continuously on $X$.
Since $\psi$ is an $\L^1$ probability density function, we have by the dominated convergence theorem that
\begin{align*}
\lim_{\delta\to 0^+}\int_{\RR^k} \int_{(\scrD_\delta)^c} \psi(\xi,\omega)  d\nu(\omega) d\xi=\int_{\RR^k} \int_{\Sigma(\scrW, 1)} \psi(\xi,\omega)\lim_{\delta\to 0^+}\chi_{\RR^k\times \scrD_{\delta}^c}(\xi, \omega)  d\nu(\omega) d\xi=0,
\end{align*}
where for the last equality we used that 
$
\lim_{\delta\to 0^+} \chi_{\RR^k\times \scrD_{\delta}^c}=\chi_{\RR^k\times \emptyset}=0.
$
Thus we get
\begin{align*}
&\limsup_{L\to\infty}\lambda\left(\left\{ x\in\scrX : L^{-n/k} r_1(x,\scrH(L^{-1}\mathcal{C}^+))>X \right\}\right)\\
 &\leq \lim_{\delta\to 0^+}\int_{(\B_X^k)^\mathrm{c}} \int_{\scrD_\delta} \psi(\xi,\omega)  d\nu(\omega) d\xi+\lim_{\delta\to 0^+}\int_{\RR^k} \int_{(\scrD_\delta)^c} \psi(\xi,\omega)  d\nu(\omega) d\xi\\
 &\leq \Psi_0(\B_X^k),
\end{align*}
proving the upper bound in \eqref{equ:cuspapp+}. 

For the lower bound, take $t>0$ such that $L=e^{kt}$. Then 
by \eqref{equ:bo4capp} and the relation \eqref{equ:conrelaapp},
\begin{align*}
\left\{ x\in\scrX : L^{-n/k} r_1(x,\scrH(L^{-1}\mathcal{C}^+)) > X \right\}
=\left\{ x\in\scrX :  r_1(\varphi_{-t}(x), \scrH(\scrA_t)) > X \right\}=\varphi_t(\scrE_t),
\end{align*}
where 
\begin{align*}
\scrE_t:=\left\{x\in \scrX:    r_1(x, \scrH(\scrA_t)) > X  \right\}.
\end{align*}
Set also
\begin{align*}
\scrE'_t:=\left\{x\in \scrX:   r_1(x,\scrH(\widetilde{\scrA}_t)) > X  \right\}
\end{align*}
and
\begin{align*}
\scrE:=\left\{x\in \scrX:   |\xi_1(x,1)| > X\  \text{or $\xi_1(x,1)$ does not exist}\right\}.
\end{align*}
Then by \eqref{equ:apbd1app} and \eqref{equ:bo5capp}  we have $\scrE'_t\subset \scrE_t$ and $\scrE'_t\subset \scrE$ and $\bigcup_t\scrE_t'=\scrE$. 
We also note that $\scrE'_t$ is increasing in $t$. 
Thus 
\begin{align}\label{equ:monconapp}
\lim_{t\to\infty}\mu(\scrE'_t)=\mu(\scrE). 
\end{align}
Moreover, for any fixed $t_0>0$ we have 
\begin{align*}
\liminf_{t\to\infty}\lambda\left(\left\{ x\in\scrX : L^{-n/k} r_1(x,\scrH(L^{-1}\mathcal{C}^+)) > X \right\}\right)&=\liminf_{t\to\infty}\lambda(\varphi_t(\scrE_t))\geq \liminf_{t\to\infty}\lambda(\varphi_t(\scrE'_t))\\
&\geq \liminf_{t\to\infty}\lambda(\varphi_t(\scrE'_{t_0})).
\end{align*}
Applying Lemma \ref{lem:hittingapp} to the compact set $\scrB_t$ given in \eqref{equ:cpttilde},
we have for any $t>0$, $x\in \scrE_t'$ if and only if $h_s(x)\in \scrH(\widetilde{\scrA}_t)^c$ for all $|s|\leq X$. That is, if we set 
\begin{align}\label{equ:btx}
\scrB_{t,X}:=\bigcup_{|s|\leq X}\widetilde{\scrA}_tU(-s)
=\left\{(\delta v'+sv_n, v'', v_n): (v', v'', v_n)\in \scrC^+, \, 0\leq \delta\leq e^{-nt},\, |s|\leq X\right\},
\end{align}
then
\begin{align}\label{scrEptformula}
\scrE_t'=\left\{\Gamma g\in \scrX: {\hat{\ZZ}^ng}\cap \scrB_{t,X}=\emptyset\right\}=\scrH(\scrB_{t,X})^c.
\end{align}
Let us set
\begin{align}
\scrB'_{t,X}:=\left\{(\delta v'+sv_n, v'', v_n): (v', v'', v_n)\in \scrC,\, v_n\geq 0,\, 0\leq \delta\leq e^{-nt},\, |s|\leq X\right\}.
\end{align}
Note that $\scrB'_{t,X}$ is compact and contains $\scrB_{t,X}$. Hence 
$\scrH(\scrB'_{t,X})^c\subset \scrX$ is open and is contained in $\scrE'_{t}$. 
Now by Remark \ref{rmk:port} (``(1) $\Rightarrow$ (3)"), we have
\begin{align*}
\liminf_{t\to\infty}\lambda(\varphi_t(\scrE'_{t_0}))&\geq \liminf_{t\to\infty}\lambda(\varphi_t(\scrH(\scrB'_{t_0,X})^c))\geq \mu(\scrH(\scrB'_{t_0,X})^c)).
\end{align*}
Note that $\scrB'_{t,X}$ is decreasing in $t$, thus $\scrH(\scrB'_{t,X})^c$ is increasing in $t$. Since $t_0$ is chosen arbitrarily, we have
\begin{align*}
\liminf_{t\to\infty}\lambda\left(\left\{ x\in\scrX : L^{-n/k} r_1(x,\scrH(L^{-1}\mathcal{C}^+)) > X \right\}\right)&\geq \lim_{t_0\to\infty}\mu(\scrH(\scrB'_{t_0,X})^c)=\mu\left(\bigcup_{t_0>0}\scrH(\scrB'_{t_0,X})^c\right)\\
&=\mu\biggl(\biggl\{\Gamma g\in \scrX: \hat{\ZZ}^ng\cap\Bigl(\bigcap_{t_0>0} \scrB'_{t_0,X}\Bigr)=\emptyset\biggr\}\biggr).
\end{align*}
Here for the last equality we used the uniform boundedness  of $\{\scrB'_{t_0,X}\}_{t_0>0}$ and the fact that
$\scrB'_{t_0,X}$ is decreasing in $t_0$. Now note that 
$$
\bigcap_{t_0>0} \scrB'_{t_0,X}=\left\{(sv_n, v'', v_n): ( v'', v_n)\in \mathrm{pr}(\scrC),\, v_n\geq 0,\, |s|\leq X\right\}.
$$
This set differs from 
\begin{align*}
\scrC(\B_X^k,\scrW)=\left\{(sv_n, v'', v_n): ( v'', v_n)\in \mathrm{pr}(\scrC),\, v_n> 0,\, |s|\leq X\right\}
\end{align*}
by a set of Lebesgue measure zero. Thus we get
\begin{align*}
\liminf_{t\to\infty}\lambda\left(\left\{ x\in\scrX : L^{-n/k} r_1(x,\scrH(L^{-1}\mathcal{C}^+)) > X \right\}\right)&\geq \mu\biggl(\biggl\{\Gamma g\in \scrX: \hat{\ZZ}^ng\cap\scrC(\B_X^k,\scrW)=\emptyset\biggr\}\biggr)=\Psi_0(\B_X^k).
\end{align*}
Combining the upper and lower bounds we get the desired equation in \eqref{equ:cuspapp+}. 
\end{proof}

\section{Tail estimates}\label{tailSEC}

In this section we study the asymptotic estimates for the tails of the limit distribution
\begin{equation}\label{intpsi0eqPsi0}
\int_X^\infty \psi_0(r)\, dr =\Psi_0(\B_X^k)
\end{equation}
of the previous section. 
Recall
\begin{equation}\label{LD01978}
\Psi_0(\B_X^k) =\mu\left(\left\{ \Gamma g \in\scrX :  \hatZZ^n g \cap \scrC(\B_X^k,{{\scrW}}) =\emptyset \right\} \right), 
\end{equation}
where 
\begin{equation}\label{scrCBxB1def}
\scrC(\B_X^k,{{\scrW}}) = \left\{ y \in\RR^n :  |(y_1,\ldots,y_k)| \leq y_n X,\,  (y_{k+1},\ldots, y_n)\in {{\scrW}}  \right\}.
\end{equation}
We prove the following asymptotic estimates for the limit distribution in \eqref{intpsi0eqPsi0}, for both large and small  values of $X$. See also  \cite{Gustav24,MS11,Strombergsson11} for tail asymptotics for lattice void distributions in various general settings.
\begin{thm}\label{thm:asestgen}
Let $k\geq1$, $m\geq0$, $n=k+m+1$
and let $\scrW$ be a Borel subset of $\R^m\times\R_{>0}$ of finite volume, and assume $\scrW$ is bounded if $n=2$. 
We have
\begin{align}\label{equ:genestsmallx}
\int_{0}^X\psi_0(r) dr=\kappa X^k+O(X^{2k}),\quad \forall\, X>0,
\end{align}
with
\begin{align}\label{equ:kappagen}
\kappa=\frac{\vol_{\RR^k}(\B_1^k)}{\zeta(n)}\int_{\scrW}y_n^k\,dy_{k+1}\cdots dy_n.
\end{align}
If we further assume that $\scrW$ is bounded and contains the positive half 
$N\cap\{w_{m+1}>0\}$ of some neighbourhood $N$ of the origin in $\R^{m+1}$, then we have
\begin{align}\label{equ:genestlower}
\int_X^{\infty}\psi_0(r) dr \asymp_{n,\scrW} X^{-(n-1)}
\cdot
\begin{cases}
1&\text{if }\: k\geq2
\\
(\log X)^m &\text{if }\: k=1,
\end{cases}
\qquad\text{as }\: X\to\infty.
\end{align}
\end{thm}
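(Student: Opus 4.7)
The identity $\int_X^\infty\psi_0(r)\,dr=\Psi_0(\B_X^k)=\mu\bigl(\{\Gamma g\in\scrX:\hatZZ^n g\cap\scrC(\B_X^k,\scrW)=\emptyset\}\bigr)$ from \eqref{LD01978} is the starting point, and correspondingly $\int_0^X\psi_0(r)\,dr=\mu(N\geq 1)$ where $N(g):=\#(\hatZZ^n g\cap\scrC(\B_X^k,\scrW))$. For the small-$X$ bound \eqref{equ:genestsmallx} I plan to apply Siegel's mean value formula for the primitive Siegel transform to obtain
\[\EE_\mu[N]=\zeta(n)^{-1}\vol_{\RR^n}\bigl(\scrC(\B_X^k,\scrW)\bigr)=\kappa X^k,\]
where the second equality is a direct Fubini computation over $|y'|\leq y_n X$ at each fixed $(y'',y_n)\in\scrW$. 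The Bonferroni sandwich $\EE_\mu[N]-\tfrac12\EE_\mu[N(N-1)]\leq\mu(N\geq1)\leq\EE_\mu[N]$ then reduces the task to showing $\EE_\mu[N(N-1)]=O(X^{2k})$, which I would control via Rogers' (or Schmidt's) second moment formula for the primitive Siegel transform: the pair-term contribution is bounded by $\vol(\scrC(\B_X^k,\scrW))^2=O(X^{2k})$, with coincidence/correlation terms of smaller order, yielding \eqref{equ:genestsmallx}.

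For the tail estimate \eqref{equ:genestlower} the argument is substantially more delicate, since the naive Athreya--Margulis bound gives only $\Psi_0(\B_X^k)\ll X^{-k}$, which is too weak whenever $k<n-1$. The key geometric observation, using that $\scrW$ contains the positive half of a neighbourhood of the origin, is that avoidance of $\scrC(\B_X^k,\scrW)$ is equivalent to the condition that every primitive $v\in\hatZZ^n g$ with $(v'',v_n)\in\scrW$ satisfies the slope bound $|v'|>Xv_n$. My plan is to parametrise $\scrX$ via the $HQU$-decomposition $g=hR(w)U(s)$ of $G_+$, in which the Haar density reads $d\mu=\zeta(n)^{-1}\,d\mu_0(A)\,db\,w_{m+1}^k\,dw\,ds$ by \eqref{muasnuds}--\eqref{nuDEF}. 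In these coordinates the distinguished cuspidal primitive vector takes the explicit form $e_ng=(-w_{m+1}s,w',w_{m+1})$, and already the slope condition for this single vector forces $|s|>X$; extracting the full power of $X$ requires imposing analogous slope constraints on the remaining primitive vectors $v=\gamma e_n g$ (for $\gamma\in\Gamma_H\bs\Gamma$, cf.\ \eqref{bij}), which are effectively indexed by the complementary $(n-1)$-dimensional lattice.

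For the upper bound in \eqref{equ:genestlower} I would carry out this analysis inductively, stratifying by dyadic scales of $w_{m+1}\in(0,1]$: at each such scale the constraint that a primitive vector in the corresponding cuspidal ``shell'' avoids $\scrC(\B_X^k,\scrW)$ contributes roughly a factor of $X^{-1}$ to the total measure, and the combined contributions over the $n-1$ effective scales yield $X^{-(n-1)}$, with the additional $(\log X)^m$ factor in the case $k=1$ arising from a telescoping sum over these scales (closely analogous to the logarithmic corrections in \cite{Strombergsson11,MS11} for related void-distribution problems). For the matching lower bound I would construct an explicit family of avoiding lattices $\Gamma h R(w) U(s_0)$ with $h$ in a fixed compact subset of $\scrF_H$, $|s_0|$ slightly exceeding $X$, and $w=(w'',w_{m+1})$ ranging over a carefully chosen multi-scale set near the cusp; a direct verification via \eqref{bij} that no primitive vector of $\hatZZ^n hR(w)U(s_0)$ lies in $\scrC(\B_X^k,\scrW)$, together with a measure computation via \eqref{nuDEF}, produces a matching lower bound. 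The main obstacle will be the multi-scale bookkeeping in the upper bound: establishing that the avoidance constraints at distinct scales of $w_{m+1}$ are effectively independent (so that their measure-theoretic contributions compose multiplicatively) and pinning down the precise logarithmic power for $k=1$ requires a careful inductive geometric analysis of how $\scrC(\B_X^k,\scrW)$ sits inside the cuspidal parametrisation, and in particular distinguishing the genuinely independent constraints from the redundant ones coming from the $\Gamma$-action.
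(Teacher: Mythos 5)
Your argument for the small-$X$ estimate \eqref{equ:genestsmallx} is essentially the paper's: Siegel's mean value theorem for the first moment, and Rogers' second moment formula for the primitive Siegel transform to bound the pair term, together giving $\mu(\scrH(B))=\vol(B)/\zeta(n)+O(\vol(B)^2)$ for $B=\scrC(\B_X^k,\scrW)$. The paper packages this as a standalone proposition, using the fact that $B\cap(-B)\subset\{0\}$ forces any two distinct primitive points in $B$ to be linearly independent (so the second-moment formula applies cleanly). You should note, however, that Rogers' formula requires $n\geq 3$; for $n=2$ one needs the separate observation (made in the paper) that when $B$ sits inside the unit disk a unimodular lattice contains at most one primitive point of $B$, so $\hat\chi_B=\chi_{\scrH(B)}$ and Siegel gives exact equality --- this is why the theorem adds the hypothesis that $\scrW$ is bounded when $n=2$.

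Your plan for the tail estimate \eqref{equ:genestlower} is genuinely different from the paper's, and --- as you yourself flag --- it has a real gap. The paper does not stratify the $HQU$-coordinate $w_{m+1}$ by dyadic scales. After a $\Phi$-rescaling it sandwiches $\scrC(\B_X^k,\scrW)$ between two model regions $\scrC_{k,m}(R)$ (a $(k+1)$-dimensional cone times an $m$-dimensional ball), and then proves $F_{k,m}(R)=\mu(\{\Gamma g:\ZZ^n g\cap\scrC_{k,m}(R)=\{0\}\})\asymp (R+1)^{-n(n-1)/k}(\log(R+2))^{a}$ (with $a=m$ iff $k=1$) by induction on $m$. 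The inductive step uses Propositions 2.8 and 2.9 of \cite{Strombergsson11}, which compare $F_{k,m}(R)$ up to constants with $R^{-n}\int_{\mathrm{S}^{n-1}}p\bigl(\scrC_{k,m}(cR^{n/(n-1)})\cap v^\perp\bigr)\,dv$, the void probability for $(n-1)$-dimensional lattices in the hyperplane sections. Splitting the sphere into three regions according to which block of $v$ dominates, each section $\scrC_{k,m}(R)\cap v^\perp$ is shown to contain (or be contained in) a lower-dimensional set of the same type, and the induction hypothesis applies. The $(\log R)^m$ factor for $k=1$ falls out of this integration, not from a telescoping sum.

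The gap in your plan is exactly the issue you acknowledge as ``the main obstacle'': establishing that the avoidance constraints at distinct dyadic scales of $w_{m+1}$ compose multiplicatively. This is not bookkeeping --- it is the entire difficulty. In $HQU$-coordinates the single distinguished primitive vector $e_n g$ does give the constraint $|s|>X$ essentially for free, but the remaining primitive vectors $\gamma e_n g$ (indexed by $\Gamma_H\backslash\Gamma\cong\hatZZ^n$) impose constraints that couple $A$, $b$, $w$, and $s$ nonlinearly; they neither localize to a single scale of $w_{m+1}$ nor decorrelate across scales, and you supply no mechanism --- analogous to the hyperplane-section reduction --- to control those correlations. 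Likewise, your proposed explicit lower-bound construction still requires verifying, over a positive-$\nu$-measure set at each scale, that \emph{every} primitive vector (not merely $e_n g$) avoids the region, which does not follow from the choice of $s_0$ alone and is not addressed in the sketch.
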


\begin{remark}
It will be clear from our method that the boundedness assumption on $\scrW$ implies the lower bound in \eqref{equ:genestlower}, while the assumption that $\scrW$ contains the positive half $N\cap\{w_{m+1}>0\}$  ensures the upper bound in \eqref{equ:genestlower}. We note that when $k=n-1$, this upper bound already follows from the general bound of Athreya and Margulis \cite[Theorem 2.2]{AthreyaMargulis}. Thus in this case \eqref{equ:genestlower} holds for all bounded $\scrW$. 
\end{remark}

\subsection{Small $X$}

For \eqref{equ:genestsmallx}, we prove the following slightly more general estimate (cf.~\cite[Lemma 5.3]{BG2023} for an alternative proof of \eqref{equ:estsmallx} below). 
\begin{prop}
Let $B\subset \RR^n$ be a finite-volume set satisfying $B\cap (-B)\subset \{0\}$.
If $n\geq 3$ then we have 
\begin{equation}\label{equ:estsmallx}
\frac{\vol_{\RR^n}(B)}{\zeta(n)}-\frac{\vol_{\RR^n}(B)^2}{\zeta(n)^2}\leq \mu\left(
\scrH(B)\right) \leq \frac{\vol_{\RR^n}(B)}{\zeta(n)}.
\end{equation} 
If $n=2$, further assume that $B\subset \mathbb{D}$ with $\mathbb{D}:=\{z\in \RR^2: \|z\|_2<1\}$, then we have
\begin{align}\label{equ:estsmllxn2}
\mu(\scrH(B))=\frac{\vol_{\RR^2}(B)}{\zeta(2)}.
\end{align}
\end{prop}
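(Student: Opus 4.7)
The plan is to introduce the counting function
\[
N(g) := \#\bigl(\hatZZ^n g\cap B\bigr),
\]
so that $\scrH(B)=\{g\in\scrX : N(g)\geq 1\}$ and $\mu(\scrH(B))=\mu(N\geq 1)$, and then read off both inequalities from mean-value formulas for $N$ and $N(N-1)$ in the style of Siegel and Rogers.

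The upper bound is immediate and works for all $n\geq 2$: Siegel's mean-value formula restricted to primitive vectors gives $\int_{\scrX}N\,d\mu = \vol_{\RR^n}(B)/\zeta(n)$, and Markov's inequality then yields $\mu(\scrH(B))\leq \int N\,d\mu = \vol_{\RR^n}(B)/\zeta(n)$.

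For the lower bound when $n\geq 3$, I would use the elementary pointwise inequality
\[
\one_{\{N\geq 1\}} \;\geq\; N - N(N-1),
\]
which holds for every nonnegative integer $N$ (the difference is $(N-1)^{2}\geq 0$ when $N\geq 1$, and both sides vanish at $N=0$). Integrating against $\mu$ gives
\[
\mu(\scrH(B)) \;\geq\; \int_{\scrX} N\,d\mu - \int_{\scrX} N(N-1)\,d\mu.
\]
Now $N(N-1)$ counts ordered pairs of distinct primitive vectors in $\hatZZ^n g\cap B$. For any such pair $(v_1,v_2)$, primitivity together with the hypothesis $B\cap(-B)\subset\{0\}$ rules out $v_1=-v_2$, so $v_1,v_2$ must be linearly independent. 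Rogers' second moment formula in its primitive-vector form (valid precisely for $n\geq 3$, cf.\ \cite{Rogers1955}) then gives $\int_\scrX N(N-1)\,d\mu = \vol_{\RR^n}(B)^2/\zeta(n)^2$, and the desired lower bound follows.

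For $n=2$ Rogers' formula no longer converges, so I would replace it with a Minkowski-type determinant argument. If $v_1,v_2\in\hatZZ^2 g\cap B$ are two distinct primitive vectors, then $v_1\neq -v_2$ (from $B\cap(-B)\subset\{0\}$) and $v_1\neq v_2$; since any two primitive vectors in $\hatZZ^2 g$ that are proportional must be $\pm$ each other, $v_1$ and $v_2$ are linearly independent. Writing $v_i=a_i g$ with $a_i\in\hatZZ^2$ gives $|\det(v_1,v_2)|=|\det(a_1,a_2)|\geq 1$, which contradicts $|\det(v_1,v_2)|\leq\|v_1\|_2\|v_2\|_2<1$ coming from $B\subset\mathbb{D}$. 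Hence $N\in\{0,1\}$, so $N=\one_{\{N\geq 1\}}$, and integrating yields the exact identity $\mu(\scrH(B))=\vol_{\RR^2}(B)/\zeta(2)$. The main obstacle in executing this plan is pinning down the clean primitive-vector form of Rogers' second moment formula, which can be obtained by M\"obius inversion from the standard Rogers identity; the key point is that for $n\geq 3$ the contribution from linearly independent primitive pairs assembles into the product $\vol_{\RR^n}(B)^2/\zeta(n)^2$, while the diagonal and antipodal contributions are killed by the hypothesis $B\cap(-B)\subset\{0\}$.
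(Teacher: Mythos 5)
Your proof is correct and follows essentially the same approach as the paper: Siegel's primitive mean-value formula for the upper bound, and for the lower bound a second-moment (Bonferroni) argument combining the hypothesis $B\cap(-B)\subset\{0\}$ with Rogers' primitive second-moment formula (valid for $n\geq 3$), together with the same Minkowski determinant argument for $n=2$. The only cosmetic difference is that you package the lower bound as the pointwise inequality $\one_{\{N\geq 1\}}\geq N-N(N-1)$, whereas the paper decomposes $\scrH(B)$ into the sets where $N=1$ and $N>1$; the two are trivially equivalent.
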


\begin{proof}
First assume $n\geq 3$.
Set 
$$
{{\scrH}}_B^1:=\left\{\Gamma g\in \scrX: \#(\hatZZ^n g\cap B)=1\right\}\quad\text{and}\quad \scrH_B^{>1}:= \left\{\Gamma g\in \scrX: \#(\hatZZ^n g\cap B)>1\right\}
$$ 
so that $\scrH(B)=\scrH_B^1\sqcup \scrH_B^{>1}$. 
Let $\hat{\chi}_B$ be the primitive Siegel transform of the indicator function $\chi_B$ of $B$. By Siegel's volume formula \cite{Siegel1945} we have
 $$
 \mu(\scrH(B))\leq \int_{\scrX}\hat{\chi}_B d\mu=\frac{\vol_{\RR^n}(B)}{\zeta(n)}.
 $$
 On the other hand, we have 
 \begin{align*}
 \int_{\scrX}\hat{\chi}_B d\mu=\int_{\scrH_B^1}\hat{\chi}_Bd\mu+\int_{\scrH_B^{>1}}\hat{\chi}_Bd\mu=\mu(\scrH_B^1)+\int_{\scrH_B^{>1}}\hat{\chi}_Bd\mu.
 \end{align*}
By assumption $B\cap (-B)\subset \{0\}$, and hence, for any $\Gamma g\in \scrH_B^{>1}$, 
we have that
any two points in $\hatZZ^n g\cap B$ form a linearly independent pair. This shows that if $\hat{\chi}_B(\Gamma g)=\ell\geq 2$, then  $$
\sum_{\substack{(v_1,v_2)\in\hatZZ^n g\times \hatZZ^n g\\ \text{$v_1, v_2$ lin. ind.}}}\chi_B(v_1)\chi_B(v_2)=\ell(\ell-1)\geq \hat{\chi}_B(\Gamma g).
 $$
 This, together with the second moment formula of the primitive Siegel transform \cite{Rogers1955}
proves that
 $$
 \int_{\scrH_B^{>1}}\hat{\chi}_Bd\mu\leq \int\sum_{\substack{(v_1,v_2)\in\hatZZ^n g\times \hatZZ^n g\\ \text{$v_1, v_2$ lin. ind.}}}\chi_B(v_1)\chi_B(v_2)d\mu=\frac{\vol_{\RR^n}(B)^2}{\zeta(n)^2}.
 $$
Hence
 \begin{align*}
 \mu(\scrH(B))&=\mu(\scrH^1_B)+\mu(\scrH_B^{>1})\geq \mu(\scrH_B^1)= \int\hat{\chi}_B d\mu-\int_{\scrH_B^{>1}}\hat{\chi}_Bd\mu\geq \frac{\vol_{\RR^n}(B)}{\zeta(n)}-\frac{\vol_{\RR^n}(B)^2}{\zeta(n)^2},
 \end{align*}
thus yielding the desired estimate.

Now we treat the case when $n=2$. Since any unimodular lattice of $\RR^2$ cannot have two linearly independent points in $\mathbb{D}$, by our assumptions on $B$, for any $x=\Gamma g\in \scrX$, $\#(\hat{\ZZ}^2g\cap B)\leq 1$. This shows that $\hat{\chi}_B=\chi_{\scrH(B)}$, from which \eqref{equ:estsmllxn2} follows directly from Siegel's volume formula \cite{Siegel1945}. 
\end{proof}

\begin{proof}[Proof of \eqref{equ:genestsmallx}]
It suffices to show there exists some $X_0>0$ such that \eqref{equ:genestsmallx} holds for all $0<X<X_0$. Take $B=\scrC(\B_X^k,\scrW)$. Note that $B$ satisfies $B\cap (-B)=\emptyset$. Then if $n\geq 3$,
applying \eqref{equ:estsmallx} to $B$ we get 
\begin{align}\label{asytailcon}
\int_{0}^X\psi_0(r) dr=
\mu(\scrH(B))
=\frac{\vol_{\RR^n}(B)}{\zeta(n)}+O(\vol_{\RR^n}(B)^2)=\kappa X^k+O(X^{2k}),\qquad \forall\, X>0, 
\end{align}
with $\kappa$ as in \eqref{equ:kappagen}. 

If $n=2$, since $\scrW$ is bounded, there exists some $T>0$ such that $\scrW\subset (0,T)$. Let $D_T:=\diag(2T, 1/(2T))\in G$ and let $X_0=\frac{1}{8T^2}$. Then we have $BD_T\subset [0,1/2]^2\subset \mathbb{D}$ for any $0<X<X_0$. We can then apply \eqref{equ:estsmllxn2} to $BD_T$  to get 
$$
\int_0^X\psi_0(r)dr=\mu(\scrH(B))=\mu(\scrH(BD_T))=\frac{\vol_{\RR^2}(BD_T)}{\zeta(2)}=\frac{\vol_{\RR^2}(B)}{\zeta(2)}=\kappa X^2,\qquad \forall\, 0<X<X_0,
$$
with $\kappa$ again as in \eqref{equ:kappagen}. 
\end{proof} 

\subsection{Large $X$}
\label{largeXsec}

We will here prove \eqref{equ:genestlower}. Thus we further assume that $\scrW$ is bounded and contains the positive half 
$N\cap\{w_{m+1}>0\}$ of some neighbourhood $N$ of the origin in $\R^{m+1}$. 
We start the proof 
by making some reductions.
Since the measure $\mu$ on $\scrX$ is invariant under right multiplication by $\Phi(t)$ for all $t$,
it follows from \eqref{LD01978} that
for any $c>0$ and $X>0$,
\begin{align}\label{LD01978transf}
\Psi_0(\B_X^k) =\mu\left(\left\{ \Gamma g \in\scrX :  \hatZZ^n g \cap \scrC(\B_X^k,{{\scrW}})\,\Phi\bigl(-n^{-1}\log(cX)\bigr) 
=\emptyset \right\} \right), 
\end{align}
and here we compute via
\eqref{scrCBxB1def} that
\begin{align}\label{CPhiformula}
\scrC(\B_X^k,\scrW)
\,\Phi\bigl(-n^{-1}\log(cX)\bigr) 
=\left\{ y \in\RR^n :  |(y_1,\ldots,y_k)| \leq c^{-1}y_n,  (y_{k+1},\ldots, y_n)\in (cX)^{\frac kn}\scrW \right \}.
\end{align}
Now let us set
\begin{align}\label{CkmRSdef}
\scrC_{k,m}(R,S):=\bigl\{y\in\RR^n : \|(y_1,\ldots,y_k)\|_2\leq y_n\leq R, \: \| (y_{k+1},\ldots, y_{n-1})\|_2\leq S \bigr\}
\qquad (R,S>0),
\end{align}
where $\|\cdot\|_2$ denotes the standard $\ell^2$-norm on $\RR^d$ for any $d\geq1$.
Note that $\scrC_{k,m}(R,S)$ is a product of 
an $m$-dimensional ball of radius $S$
and a $(k+1)$-dimensional cone of height and radius $R$, with apex at the origin.
(If $m=0$, the definition should be interpreted as saying
$\scrC_{k,0}(R,S)=\{y\in\R^{k+1}:\|(y_1,\ldots,y_k)\|_2\leq y_{k+1}\leq R\}$.)
We also introduce the short-hand notation
\begin{align*}
\scrC_{k,m}(R):=\scrC_{k,m}(R,R),
\end{align*}
and set
\begin{align}\label{FkmRdef}
F_{k,m}(R):=
\mu\left(\left\{ \Gamma g \in\scrX :  \hatZZ^n g \cap \scrC_{k,m}(R)=\emptyset \right\} \right).
\end{align}
It will be important later to note that for any 
$R,S>0$, the matrix
$$
 \begin{pmatrix} (S/R)^{\frac mn}1_k & 0 & 0 \\ 0  &  (S/R)^{-\frac{k+1}n}1_m & 0 \\  0 & 0  & (S/R)^{\frac mn} \end{pmatrix}
$$ 
lies in $G$,
and maps the set
$\scrC_{k,m}(R,S)$ to $\scrC_{k,m}\bigl(R^{\frac{k+1}n}S^{\frac mn}\bigr)$;
therefore
\begin{align}\label{pCkmRSformula}
\mu\left(\left\{ \Gamma g \in\scrX :  \hatZZ^n g \cap \scrC_{k,m}(R,S)=\emptyset \right\} \right)
=F_{k,m}\bigl(R^{\frac{k+1}n}S^{\frac mn}\bigr),
\qquad (R,S>0).
\end{align}

Let us fix positive constants $c_1,\ldots,c_4$ (which only depend on the fixed norm $|\cdot|$ and  the set $\scrW$)
such that
\begin{align}\label{normcomparison}
\scrB_{c_1}^k\subset\B_1^k\subset\scrB_{c_2}^k
\qquad\text{and}\qquad
\scrB_{c_3}^m\times(0,c_3]\subset\scrW\subset\scrB_{c_4}^{m+1},
\end{align}
where $\scrB_r^d:=\{z\in\RR^d:\|z\|_2\leq r\}$ for any $d\geq1$. Note that the existence of $c_3$ and $c_4$ follows from our extra assumptions on $\scrW$. 
Using \eqref{CPhiformula} and \eqref{normcomparison}
one verifies that
\begin{align*}
\scrC(\B_X^k,\scrW)\, & \Phi\bigl(-n^{-1}\log(c_2X)\bigr)
\subset\scrC_{k,m}(c_4c_2^{\frac kn}X^{\frac kn})
\end{align*}
and 
\begin{align*}
\scrC_{k,m}(c_3c_1^{\frac kn}X^{\frac kn})\setminus\{y_n=0\}
\subset \scrC(\B_X^k,\scrW)\,\Phi\bigl(-n^{-1}\log(c_1X)\bigr),
\end{align*}
for all $X>0$.
Using now \eqref{LD01978transf}
and the fact that for $\mu$-almost all points $\Gamma g\in\scrX$
we have
$\hatZZ^n g\cap\{y_n=0\}=\emptyset$,
we conclude that
\begin{align}\label{Phi0bounds1}
\Psi_0(\B_X^k)\geq F_{k,m}\bigl(c_4c_2^{\frac kn}X^{\frac kn}\bigr)
\qquad\text{and}\qquad
\Psi_0(\B_X^k)\leq F_{k,m}\bigl(c_3c_1^{\frac kn}X^{\frac kn}\bigr).
\end{align}
Now \eqref{equ:genestlower} 
is an immediate consequence of \eqref{Phi0bounds1} and the following estimate.
\begin{prop}\label{FkmestimatePROP}
Let $k\geq1$, $m\geq0$, and $n=k+m+1$.
Then for all $R>0$ we have
\begin{align}\label{FkmestimatePROPres}
F_{k,m}(R)\asymp_n (R+1)^{-\frac{n(n-1)}k}
\cdot
\begin{cases}
1&\text{if }\: k\geq2
\\
(\log(R+2))^m &\text{if }\: k=1.
\end{cases}
\end{align}
\end{prop}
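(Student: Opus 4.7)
My plan is to use reduction theory on $\Gamma \backslash G = \SL(n,\ZZ)\backslash\SL(n,\RR)$ to relate $F_{k,m}(R)$ to the Haar measure of a cuspidal region whose shape reflects the block structure $(k, m+1)$ of the scaling group $\Phi(t)$ from \eqref{equ:phit}. I would work in Iwasawa/Siegel coordinates $g = n_0\, a\, \kappa$ with $a = \diag(a_1,\ldots,a_n)$, $\prod a_i = 1$; in these coordinates the Haar measure on a Siegel fundamental domain factors as an integral over the Weyl chamber in $a$ (against an explicit Jacobian $\prod_{i<j}a_i/a_j$) times the bounded unipotent and compact-group factors. Lattice vectors $m g \in \ZZ^n g$ are controlled coordinate-wise by $(m_1 a_1,\ldots,m_n a_n)$ up to bounded unipotent and orthogonal perturbations.

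For the lower bound, I would exhibit an explicit subset of $\Gamma\backslash G$ of Haar measure $\asymp R^{-n(n-1)/k}$ (times $(\log R)^m$ when $k=1$) on which avoidance of $\scrC_{k,m}(R)$ demonstrably holds. The candidate lies deep in the cusp of the maximal parabolic $P \subset G$ of block type $(k, m+1)$. For the diagonal family $g = \Phi(-\tau)$ with $\tau > (\log R)/k$, every non-zero primitive vector $v \in \ZZ^n g$ has last coordinate $v_n = m_n e^{k\tau}$ for some $m_n \in \ZZ$, hence either $v_n = 0$ or $|v_n| > R$; in either case $v \notin \scrC_{k,m}(R)$. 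Thickening by a bounded Levi, opposite-unipotent and $K$-perturbation preserves avoidance away from an explicit Diophantine-small set in the opposite-unipotent entries; integrating the Haar Jacobian over the admissible region produces the asserted exponent. The $(\log R)^m$ factor for $k = 1$ arises because in this case the one-dimensional cone constrains only a single entry of the opposite unipotent, allowing iterated cusp contributions from the Levi factor $\SL(m+1) \subset M$, analogous to the standard cusp integrals for $\SL(m+1,\ZZ)\backslash \SL(m+1,\RR)$.

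For the upper bound, suppose $\ZZ^n g$ avoids $\scrC_{k,m}(R)$. Siegel's mean value formula gives $\EE[\#(\ZZ^n g \cap \scrC_{k,m}(R))] \asymp R^n$, and Rogers' higher moment formulas control the variance; Chebyshev's inequality alone yields the weaker rate $R^{-n}$ already present in Athreya--Margulis \cite[Theorem 2.2]{AthreyaMargulis}. A refined reduction-theoretic analysis exploiting the anisotropic shape of $\scrC_{k,m}(R)$ then shows that avoidance in fact forces $g$ (up to $\Gamma$-equivalence) into the same cuspidal region constructed in the lower-bound step, yielding the matching upper bound.

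The main obstacle is identifying the exact Iwasawa polytope carved out by the avoidance condition. The product structure of $\scrC_{k,m}(R)$ -- a $(k+1)$-dimensional cone in $(y_1,\ldots,y_k,y_n)$ coordinates times an $m$-dimensional ball in $(y_{k+1},\ldots,y_{n-1})$ coordinates -- means that avoidance couples Diophantine conditions on several successive Iwasawa heights through the unipotent coordinates. For $k \geq 2$ the multi-dimensional cone constraint is rigid enough to produce a clean power of $R$ without logarithmic corrections; for $k = 1$ the one-dimensional cone is too weak to suppress logarithmic slack in each of the $m$ middle coordinates, yielding the $(\log R)^m$ factor via $m$ nested cusp integrals for the ambient $\SL(m+1)$.
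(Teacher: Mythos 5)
Your proposal takes a genuinely different route from the paper: the paper proves Proposition~\ref{FkmestimatePROP} by induction on $m$, combining the two--sided bounds of \cite[Prop.\ 2.8, 2.9]{Strombergsson11} -- which reduce the void probability of a lattice against a convex body to an integral over $v\in\mathrm{S}_1^{n-1}$ of the corresponding void probability of the $(n-1)$-dimensional lattice in $v^\perp$ against the slice $\scrC_{k,m}(R)\cap v^\perp$ -- with explicit set inclusions showing that each such slice, after projecting and rescaling, is comparable to a lower-dimensional cone $\scrC_{k,m-1}(\cdot)$. You instead propose direct reduction theory in a Siegel domain, integrating Iwasawa Jacobians over a cuspidal polytope.

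The key gap is the upper bound, and it is not a small one. You write that ``avoidance in fact forces $g$ (up to $\Gamma$-equivalence) into the same cuspidal region constructed in the lower-bound step,'' but this is precisely the hard content and is asserted, not proved. A lattice can avoid a large convex body by being near \emph{any} cusp direction for which some rational hyperplane misses the body, not just the parabolic of block type $(k,m+1)$ that you use for the lower bound; the paper's hyperplane-integral identity handles exactly this by integrating over all $v\in\mathrm{S}_1^{n-1}$ and separating the sphere into the three regimes $\scrD_1,\scrD_2,\scrD_3$ according to which coordinates of $v$ dominate. Your plan contains no mechanism for ruling out the contributions of the other cusp directions, nor for showing their total measure is dominated by the one you compute. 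Secondary issues: the lower-bound ``thickening'' and the computation of the Jacobian integral over the resulting polytope are only sketched (and note that $v_n=0$ alone does not place $v$ outside $\scrC_{k,m}(R)$ as defined in \eqref{CkmRSdef} -- you need to additionally use that the middle coordinates are then forced to vanish when $e^{k\tau}>R$); and the $(\log R)^m$ factor for $k=1$ is attributed to ``nested cusp integrals in $\SL(m+1)$'' without any calculation showing that the exponent of the log comes out to exactly $m$. In the paper that power emerges cleanly from the induction, one factor of $\log$ per step, via the integral $\int_{R^{-n/k}}^1 (\log(\cdot))^{m-1} w^{-1}\,dw$.
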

To start the proof of Proposition \ref{FkmestimatePROP},
let us note that since $\scrC_{k,m}(R)$ is convex and in particular star-shaped at the origin,
\eqref{FkmRdef} can be rewritten as
\begin{align}\label{FkmRdef2}
F_{k,m}(R):=
\mu\left(\left\{ \Gamma g \in\scrX :  \ZZ^n g \cap \scrC_{k,m}(R)=\{0\} \right\} \right).
\end{align}
Proposition \ref{FkmestimatePROP} will now be proved by
induction over the dimension $m$,
using estimates from \cite{Strombergsson11}. 

First assume $m=0$; then recall that
$\scrC_{k,0}(R)=\{y\in\R^{k+1}:\|(y_1,\ldots,y_k)\|_2\leq y_{k+1}\leq R\}$.
In this case we have
\begin{align}\label{Fk0Rasymp}
F_{k,0}(R)\asymp
\min(1,R^{-(k+1)})\qquad\forall R>0.
\end{align}
(In the following, the implied constant in any ``$\ll$'' and ``$\asymp$'' is allowed to depend on $n$,
and thus on $k$ and $m$.)
Indeed, the lower bound
$F_{k,0}(R)\gg\min(1,R^{-(k+1)})$
follows from 
\cite[Prop.\ 2.9]{Strombergsson11},
since there exists a subset $M\subset\mathrm{S}_1^k$
of positive measure such that
$c\scrC_{k,0}(R)\cap v^\perp=\{0\}$ for all $v\in M$ and all $c,R>0$;
the upper bound
$F_{k,0}(R)\ll\min(1,R^{-(k+1)})$
follows from \cite[Prop.\ 2.8]{Strombergsson11},
or alternatively from \cite[Theorem 2.2]{AthreyaMargulis}.
Note that \eqref{Fk0Rasymp} is equivalent with the estimate \eqref{FkmestimatePROPres}
in the case $m=0$.

From now on we assume $m\geq1$.
We will first bound $F_{k,m}(R)$ from above.
If $R\leq10$ then the fact that $F_{k,m}(R)$ is bounded above by the right hand side of
\eqref{FkmestimatePROPres} is obvious,
since $F_{k,m}(R)\leq1$ while the right hand side is $\gg1$.
Hence from now on we may assume $R\geq10$.

By \cite[Prop.\ 2.8]{Strombergsson11},
there is a constant $c>0$ which only depends on $n$ such that
\begin{align}\label{FkmRlbfromProp2p8}
F_{k,m}(R)\ll 
R^{-n}\int_{\mathrm{S}_1^{n-1}}
p\Bigl(\scrC_{k,m}(cR^{\frac n{n-1}}) 
\cap v^\perp\Bigr)\,dv,  
\end{align}
where $\mathrm{S}_1^{n-1}=\{v\in\RR^n:\|v\|_2=1\}$ and
where for any $v\in\mathrm{S}_1^{n-1}$ and 
any (measurable) subset $\scrD\subset\R^n$ we write
\begin{align}\label{pDvperpDEF}
p(\scrD\cap v^\perp)
:=\mu_0\bigl(\bigl\{\Gamma_0g\in\GamGG : \ZZ^{n-1}g\cap L_v(\scrD)\subset\{0\}\bigr\}\bigr),
\end{align}
with $L_v$ being an arbitrary volume preserving linear space isomorphism
from $v^\perp$ onto $\R^{n-1}$.
(Recall that $\mu_0$ denotes the right $G_0$-invariant probability measure on $\GamGG$, 
with $G_0=\SL(n-1,\RR)$ and $\Gamma_0=\SL(n-1,\ZZ)$.
The right hand side of  \eqref{pDvperpDEF} is independent of the choice of $L_v$
since $\mu_0$ is invariant under right $G_0$-multiplication,
and also invariant under the map $\Gamma_0 g\mapsto\Gamma_0\gamma^{-1}g\gamma$
for any fixed $\gamma\in\GL(n-1,\ZZ)$.)

Given any $v\in\mathrm{S}_1^{n-1}$
we set
\begin{align}\label{w1w2w3NEWDEF}
w_1:=\|(v_1,\ldots,v_k)\|_2;\qquad
w_2:=\|(v_{k+1},\ldots,v_{n-1})\|_2;\qquad
w_3:=|v_{n}|.
\end{align}
In the discussion below, we will always assume that $w_1,w_2,w_3$ are \textit{positive;}
this is permitted since it merely means removing a null set from the integral in \eqref{FkmRlbfromProp2p8}.
To bound the integrand in \eqref{FkmRlbfromProp2p8},
note that for any $R>0$,   
since $\scrC_{k,m}(R)$ is preserved by any orthogonal map of the form
$\begin{pmatrix} K_1&0&0\\0&K_2&0\\0&0&\pm 1\end{pmatrix}$ with $K_1\in\OOO(k)$ and $K_2\in\OOO(m)$,
we have
\begin{align*}
p\bigl(\scrC_{k,m}(R)\cap v^\perp\bigr)=p\bigl(\scrC_{k,m}(R)\cap \tv^\perp\bigr),
\qquad\text{with }\:
\tv=\bigl(\underbrace{w_1,0,\ldots,0}_{k\text{ entries}},\underbrace{w_2,0,\ldots,0}_{m\text{ entries}},w_3\bigr).
\end{align*}
Let $\widetilde{\mathrm{pr}}:\RR^n\to\RR^{n-1}$
be the projection map sending 
$(y_1,\ldots,y_n)$ to $(y_1,\ldots,y_k,y_{k+2},\ldots,y_n)$.
Note that $y\in\RR^n$ lies in $\tv^\perp$ 
if and only if $w_1y_1+w_2y_{k+1}+w_3y_n=0$.
Hence, since we are assuming $w_2>0$, the restriction of 
$\widetilde{\mathrm{pr}}$ to $\tv^\perp$ is a linear bijection
from $\tv^\perp$ onto $\RR^{n-1}$
which scales $(n-1)$-dimensional Lebesgue measure by the factor $w_2$.
It follows that $\widetilde{\mathrm{pr}}\big|_{\tv^\perp}$
composed with dilation by a factor $w_2^{-\frac1{n-1}}$
is a volume preserving linear space isomorphism
from $\tv^\perp$ onto $\R^{n-1}$,
and so
\begin{align}\label{pCkmRcapvperpformula1}
p\bigl(\scrC_{k,m}(R)\cap v^\perp\bigr)
=\mu_0\Bigl(\Bigl\{\Gamma_0g\in\GamGG : \ZZ^{n-1}g\cap 
\widetilde{\mathrm{pr}}\big(\scrC_{k,m}\bigl(w_2^{-\frac1{n-1}} R\bigr)\cap \tv^\perp\bigr)=\{0\}\Bigr\}\Bigr).
\end{align}
Here for any $R'>0$,
\begin{align}\label{projsetformula1}
\widetilde{\mathrm{pr}}\big(\scrC_{k,m}(R')\cap \tv^\perp\bigr)
=\Bigl\{z\in\RR^{n-1} &:\|(z_1,\ldots,z_k)\|_2\leq z_{n-1}\leq R',\:
\\\notag
&\Bigl(\tfrac{w_1}{w_2}z_1+\tfrac{w_3}{w_2}z_{n-1}\Bigr)^2+\| (z_{k+1},\ldots, z_{n-2})\|_2^2\leq {R'}^2\Bigr\}.
\end{align}
(If $m=1$, ``$\| (z_{k+1},\ldots, z_{n-2})\|_2$'' should be interpreted as ``$0$''.)

We will split the domain of integration in \eqref{FkmRlbfromProp2p8} into the three overlapping domains
\begin{align*}
&\scrD_1=\left\{v\in\mathrm{S}_1^{n-1}:w_1\geq \max(2w_2,2w_3)\right\},
\qquad
\scrD_2=\left\{v\in\mathrm{S}_1^{n-1}:2w_2\geq\max(w_1,w_3)\right\},
\\
&\hspace{100pt}
\scrD_3=\left\{v\in\mathrm{S}_1^{n-1}:2w_3\geq\max(w_1,w_2)\right\}.
\end{align*}
Note that $\mathrm{S}_1^{n-1}=\scrD_1\cup\scrD_2\cup\scrD_3$.

\vspace{5pt}

Let us first assume $v\in\scrD_2$, i.e.\ $2w_2\geq\max(w_1,w_3)$.
Then we have, for any $R'>0$,
\begin{align}\label{part1inclusion}
\widetilde{\mathrm{pr}}\big(\scrC_{k,m}(R')\cap \tv^\perp\bigr)
\supset
\scrC_{k,m-1}\bigl(\tfrac15R'\bigr).
\end{align}
Indeed, assume $z\in\scrC_{k,m-1}\bigl(\tfrac15R'\bigr)$,
that is,
$\|(z_1,\ldots,z_k)\|_2\leq z_{n-1}\leq\frac15R'$
and $\|(z_{k+1},\ldots,z_{n-2})\|_2\leq\frac15 R'$.
Then
\begin{align*}
\Bigl|\tfrac{w_1}{w_2}z_1+\tfrac{w_3}{w_2}z_{n-1}\Bigr|
\leq \tfrac{w_1+w_3}{w_2}z_{n-1}
\leq 4z_{n-1}
\leq\tfrac45 R'
\end{align*}
and so
\begin{align*}
\Bigl(\tfrac{w_1}{w_2}z_1+\tfrac{w_3}{w_2}z_{n-1}\Bigr)^2+\| (z_{k+1},\ldots, z_{n-2})\|_2^2
\leq\Bigl(\Bigl(\tfrac45\Bigr)^2+\Bigl(\tfrac15\Bigr)^2\Bigr){R'}^2
<{R'}^2.
\end{align*}
Hence $z\in\widetilde{\mathrm{pr}}\big(\scrC_{k,m}(R')\cap \tv^\perp\bigr)$
(via \eqref{projsetformula1}),
and \eqref{part1inclusion} is proved.

It follows from \eqref{pCkmRcapvperpformula1} and \eqref{part1inclusion} that
\begin{align}\label{part1ineq1}
p\bigl(\scrC_{k,m}(R)\cap v^\perp\bigr)\leq
F_{k,m-1}\bigl(\tfrac15 w_2^{-\frac1{n-1}}R\bigr)
\end{align}
for all $R>0$.
Using also $w_2^{-\frac1{n-1}}\geq1$
and the induction hypothesis
that \eqref{FkmestimatePROPres} holds with $m-1$ in the place of $m$, we get
\begin{align}
p\bigl(\scrC_{k,m}(R)\cap v^\perp\bigr)\ll
(R+1)^{-\frac{(n-1)(n-2)}k}(\log(R+2))^a
\qquad(\forall R>0),
\end{align}
where $a=m-1$ if $k=1$, otherwise $a=0$.
Hence we conclude that
\begin{align}\notag
&R^{-n}\int_{\scrD_2}p\Bigl(\scrC_{k,m}(cR^{\frac n{n-1}})\cap v^\perp\Bigr)\,dv
\ll R^{-\frac{n(n+k-2)}k}(\log R)^a
\qquad(\forall R\geq10).
\end{align}
Since $n+k-2\geq n-1$,
this shows that the $\scrD_2$-part of the bound in
\eqref{FkmRlbfromProp2p8} is bounded above by the right hand side of
\eqref{FkmestimatePROPres}.

\vspace{5pt}

Next we assume $v\in\scrD_3$,
i.e.\ $2w_3\geq\max(w_1,w_2)$.
Then we have, for any $R'>0$,
\begin{align}\label{part2inclusion}
\widetilde{\mathrm{pr}}\big(\scrC_{k,m}(R')\cap \tv^\perp\bigr)
\supset
\scrC_{k,m-1}\Bigl(\tfrac{w_2}{4w_3}R',\tfrac12 R'\Bigr).
\end{align}
Indeed, assume 
$z\in\scrC_{k,m-1}\Bigl(\frac{w_2}{4w_3}R',\frac12 R'\Bigr)$,
that is,
$\|(z_1,\ldots,z_k)\|_2\leq z_{n-1}\leq\frac{w_2}{4w_3}R'\leq\frac12R'$
and $\|(z_{k+1},\ldots,z_{n-2})\|_2\leq\frac12 R'$.
Then 
\begin{align*}
\Bigl|\tfrac{w_1}{w_2}z_1+\tfrac{w_3}{w_2}z_{n-1}\Bigr|
\leq \tfrac{w_1+w_3}{w_2}z_{n-1}
\leq\tfrac{3w_3}{w_2}z_{n-1}
\leq\tfrac{3}{4}R'
\end{align*}
and so
\begin{align*}
\Bigl(\tfrac{w_1}{w_2}z_1+\tfrac{w_3}{w_2}z_{n-1}\Bigr)^2+\| (z_{k+1},\ldots, z_{n-2})\|_2^2
\leq\Bigl(\Bigl(\tfrac34\Bigr)^2+\Bigl(\tfrac12\Bigr)^2\Bigr){R'}^2
<{R'}^2.
\end{align*}
Hence $z\in\widetilde{\mathrm{pr}}\big(\scrC_{k,m}(R')\cap \tv^\perp\bigr)$
and \eqref{part2inclusion} is proved.

It follows from \eqref{pCkmRcapvperpformula1}, \eqref{part2inclusion} and \eqref{pCkmRSformula} that
\begin{align}
p\bigl(\scrC_{k,m}(R)\cap v^\perp\bigr)\leq
F_{k,m-1}\Biggl(\Bigl(\tfrac{w_2}{4w_3}\Bigr)^{\tfrac{k+1}{n-1}}\Bigl(\tfrac12\Bigr)^{\frac {m-1}{n-1}}
w_2^{-\frac1{n-1}}R\Biggr)
\end{align}
for all $R>0$.
Noticing also that $w_3\asymp 1$ (since $2w_3\geq\max(w_1,w_2)$ and $w_1^2+w_2^2+w_3^2=1$),
and using the induction hypothesis
that \eqref{FkmestimatePROPres} holds with $m-1$ in the place of $m$, we get
\begin{align}\label{partD3disc1}
p\bigl(\scrC_{k,m}(R)\cap v^\perp\bigr)\ll
\Bigl(w_2^{\frac k{n-1}}R+1\Bigr)^{-\frac{(n-1)(n-2)}k}\cdot
\Bigl(\log\Bigl(w_2^{\frac k{n-1}}R+2\Bigr)\Bigr)^a
\qquad(\forall R>0),
\end{align}
where $a=m-1$ if $k=1$, otherwise $a=0$.
Using this bound, we get for all $R\geq10$:
\begin{align}\notag
&R^{-n}\int_{\scrD_3}p\Bigl(\scrC_{k,m}(cR^{\frac n{n-1}})\cap v^\perp\Bigr)\,dv
\\\notag
&\ll R^{-n}\int_0^{2/\sqrt5}\int_0^{2/\sqrt5}
\Bigl(w_2^{\frac k{n-1}}R^{\frac n{n-1}}+1\Bigr)^{-\frac{(n-1)(n-2)}k}
\Bigl(\log\Bigl(w_2^{\frac k{n-1}}R^{\frac n{n-1}}+2\Bigr)\Bigr)^a
\, w_1^{k-1}\,dw_1\, w_2^{m-1}\,dw_2
\\\notag 
&\ll
R^{-n}\int_0^1
\Bigl(w_2^{\frac k{n-1}}R^{\frac n{n-1}}+1\Bigr)^{-\frac{(n-1)(n-2)}k}
\Bigl(\log\Bigl(w_2^{\frac k{n-1}}R^{\frac n{n-1}}+2\Bigr)\Bigr)^a\, w_2^{m-1}\,dw_2
\\\notag
&\ll
R^{-n}\int_0^{R^{-n/k}}w^{m-1}\,dw
+R^{-n-\frac{n(n-2)}k}\int_{R^{-n/k}}^1 
\Bigl(\log\Bigl(w^{\frac k{n-1}}R^{\frac n{n-1}}+2\Bigr)\Bigr)^a\,w^{-k}\,dw.
\end{align}
If $k\geq2$ (so that $a=0$), the above expression is seen to be
$\ll R^{-\frac{n(n-1)}k}$.
In the remaining case $k=1$ ($\Rightarrow a=m-1$),
we substitute $w=R^{-n}e^{(n-1)u}$
to get
\begin{align*}
\ll R^{-n(n-1)}+R^{-n(n-1)} &\int_0^{\log(R^{n/(n-1)})} (\log(e^u+2))^{m-1}\,du
\\
&\ll R^{-n(n-1)}\int_0^{\log(R^{n/(n-1)})}(u+1)^{m-1}\,du
\ll R^{-n(n-1)}\bigl(\log R\bigr)^{m}.
\end{align*}
Hence we have proved that (when $R\geq10$) the $\scrD_3$-part
of the bound in \eqref{FkmRlbfromProp2p8} is bounded above by the right hand side of
\eqref{FkmestimatePROPres}.

\vspace{5pt}

It remains to consider points $v\in\scrD_1$, i.e.\ points satisfying
$w_1\geq2\max(w_2,w_3)$.
In this case we will use the following modification of
\eqref{pCkmRcapvperpformula1}, which is proved by the same argument:
\begin{align}\label{pCkmRcapvperpformula2}
p\bigl(\scrC_{k,m}(R)\cap v^\perp\bigr)
=\mu_0\Bigl(\Bigl\{\Gamma_0g\in\GamGG : \ZZ^{n-1}g\cap 
\widehat{\mathrm{pr}}\big(\scrC_{k,m}\bigl(w_1^{-\frac1{n-1}} R\bigr)\cap \tv^\perp\bigr)=\{0\}\Bigr\}\Bigr),
\end{align}
where $\widehat{\mathrm{pr}}:\RR^n\to\RR^{n-1}$
sends $(y_1,\ldots,y_n)$ to $(y_2,\ldots,y_n)$.
We have for any $R'>0$:
\begin{align}\label{projsetformula2}
\widehat{\mathrm{pr}}\big(\scrC_{k,m}(R')\cap \tv^\perp\bigr)
=\Biggl\{z\in\RR^{n-1} :
\sqrt{\Bigl(\tfrac{w_2}{w_1}z_{k}+\tfrac{w_3}{w_1}z_{n-1}\Bigr)^2
+\|(z_1,\ldots,z_{k-1})\|^2_2}\leq z_{n-1}\leq R',\:
\\\notag
\| (z_{k},\ldots, z_{n-2})\|_2\leq R'\Biggr\}.
\end{align}
(If $k=1$, ``$\| (z_1,\ldots, z_{k-1})\|_2$'' should be interpreted as ``$0$''.)

We now claim that
\begin{align}\label{hprCkmRpcapvperpinclusion}
&\widehat{\mathrm{pr}}\big(\scrC_{k,m}(R')\cap \tv^\perp\bigr)
\supset\scrC_{k,m-1}\bigl(\tfrac14R'\bigr)\,\bD\,\bU,
\end{align}
where $\bD=\diag\bigl[2^{-\frac1{k+1}}I_k,I_{m-1},2^{\frac k{k+1}}\bigr]\in G_0$,
and where $\bU=(\bu_{ij})\in G_0$ is the unipotent matrix which has
$\bu_{k,n-1}=\frac{w_2w_3}{w_1^2-w_3^2}$, 
$1$s along the diagonal
(i.e.\ $\bu_{j,j}=1$ for all $j$),
and entries $\bu_{i,j}=0$ for all other pairs $i,j$.
To prove \eqref{hprCkmRpcapvperpinclusion},
let $p$ be an arbitrary point in $\scrC_{k,m-1}\bigl(\tfrac14R'\bigr)$,
and set $v=p\bD$ and $z=p\bD\bU=v\bU$.
Then
\begin{align}\label{hprCkmRpcapvperpinclusionpf4}
\|(v_1,\ldots,v_k)\|_2\leq\tfrac12 v_{n-1}\leq 2^{\frac k{k+1}-3}R'<\tfrac14R',
\qquad 
\|(v_{k+1},\ldots,v_{n-2})\|_2\leq\tfrac14R',
\end{align}
and 
\begin{align}\label{unipotentsubst}
z_{n-1}:=v_{n-1}+\tfrac{w_2w_3}{w_1^2-w_3^2}v_{k}
\qquad\text{and}\qquad z_j=v_j\:\text{ for all }j<n-1;
\end{align}
and our task is to prove that $z\in \widehat{\mathrm{pr}}\big(\scrC_{k,m}(R')\cap \tv^\perp\bigr)$.
It follows from \eqref{hprCkmRpcapvperpinclusionpf4} that
$\|(z_k,\ldots,z_{n-2})\|_2^2=|v_k|^2+\|(v_{k+1},\ldots,v_{n-2})\|_2^2\leq(\frac14R')^2+(\frac14R')^2<{R'}^2$.
Also, since $v\in\scrD_1$,
\begin{align}\label{hprCkmRpcapvperpinclusionpf3}
\frac{w_2w_3}{w_1^2-w_3^2}=\frac{w_2w_3/w_1^2}{1-(w_3/w_1)^2}\leq\frac{4^{-1}}{1-4^{-1}}=\frac13,
\end{align}
and so
$z_{n-1}\leq v_{n-1}+\frac1{3}|v_k|
\leq\frac12R'+\frac1{3}\cdot\frac14R'<R'$.
Hence by \eqref{projsetformula2},
it only remains to prove that
$\sqrt{\Bigl(\tfrac{w_2}{w_1}z_{k}+\tfrac{w_3}{w_1}z_{n-1}\Bigr)^2
+\|(z_1,\ldots,z_{k-1})\|_2^2}\leq z_{n-1}$.
Note that \eqref{hprCkmRpcapvperpinclusionpf4} implies that 
$|v_k|\leq\frac12 v_{n-1}$,
and so $z_{n-1}\geq v_{n-1}-\frac1{3}\cdot\frac12 v_{n-1}\geq0$;
hence it suffices to prove that
\begin{align}\label{hprCkmRpcapvperpinclusionpf2}
\Bigl(\tfrac{w_2}{w_1}z_{k}+\tfrac{w_3}{w_1}z_{n-1}\Bigr)^2
+\|(z_1,\ldots,z_{k-1})\|_2^2\leq z_{n-1}^2,
\end{align}
which is equivalent with
\begin{align}\label{hprCkmRpcapvperpinclusionpf1}
\tfrac{w_2^2}{w_1^2-w_3^2}v_k^2
+\|(v_1,\ldots,v_{k-1})\|_2^2\leq \tfrac{w_1^2-w_3^2}{w_1^2}v_{n-1}^2.
\end{align}
Now since $\tfrac{w_2^2}{w_1^2-w_3^2}\leq\frac13$
(as in \eqref{hprCkmRpcapvperpinclusionpf3})
and $\tfrac{w_1^2-w_3^2}{w_1^2}\geq\frac34$,
\eqref{hprCkmRpcapvperpinclusionpf1} follows from 
the first inequality in \eqref{hprCkmRpcapvperpinclusionpf4}. 
This completes the proof of the inclusion in \eqref{hprCkmRpcapvperpinclusion}.

Using \eqref{pCkmRcapvperpformula2} and \eqref{hprCkmRpcapvperpinclusion}
we conclude that, for any $R>0$,
\begin{align*}
p\bigl(\scrC_{k,m}(R)\cap v^\perp\bigr)\leq F_{k,m-1}\bigl(\tfrac14w_1^{-\frac1{n-1}}R\bigr).
\end{align*}
Using this, we may argue exactly as below \eqref{part1ineq1}
to conclude that the $\scrD_1$-part of the bound in
\eqref{FkmRlbfromProp2p8} is bounded above by the right hand side of
\eqref{FkmestimatePROPres}.
Recall that we have already reached the same conclusion for the $\scrD_2$-part
and the $\scrD_3$-part; hence we conclude that 
$F_{k,m}(R)$ is bounded from above by the right hand side of
\eqref{FkmestimatePROPres}.

\vspace{5pt}

It remains to prove the corresponding \textit{bound from below.} 
By \cite[Prop.\ 2.9]{Strombergsson11},
there is a constant $c>0$ 
(not same as in \eqref{FkmRlbfromProp2p8})
which only depends on $n$ such that
\begin{align}\label{FkmRlbfromProp2p9}
F_{k,m}(R)\gg_n\min\Biggl(1,
R^{-n}\int_{\mathrm{S}_1^{n-1}}
p\Bigl(\scrC_{k,m}(cR^{\frac n{n-1}}) 
\cap v^\perp\Bigr)\,dv\Biggr).
\end{align}
We will here consider the integral restricted to the set: 
\begin{align*}
\scrD_3'=\bigl\{v\in\mathrm{S}_1^{n-1}:w_1,w_2\in\bigl(0,\tfrac13\bigr)\bigr\}.
\end{align*}
Note that $w_3=\sqrt{1-w_1^2-w_2^2}>\sqrt{\frac79}>\frac23$ for all $v\in\scrD_3'$
(in particular $\scrD_3'$ is a subset of $\scrD_3$).
Assuming from now on that $v\in\scrD_3'$,
it follows that for every point $z\in\widetilde{\mathrm{pr}}\big(\scrC_{k,m}(R')\cap \tv^\perp\bigr)$
we have 
(cf.\ \eqref{projsetformula1})
$|z_1|\leq z_{n-1}$ and 
$\bigl|\tfrac{w_1}{w_2}z_1+\tfrac{w_3}{w_2}z_{n-1}\bigr|
\geq\frac{w_3}{2w_2}z_{n-1}$ ($\geq0$),
and so
$z_{n-1}\leq \frac{2w_2}{w_3}R'$.
Hence,
by comparing with \eqref{CkmRSdef},
\begin{align}\label{prCkmRpcontainment}
\widetilde{\mathrm{pr}}\big(\scrC_{k,m}(R')\cap \tv^\perp\bigr)
\subset\scrC_{k,m-1}\biggl(\tfrac{2w_2}{w_3}R',R'\biggr).
\end{align}
Using \eqref{pCkmRcapvperpformula1}, \eqref{prCkmRpcontainment}
and \eqref{pCkmRSformula}, we conclude that,
\begin{align}
p\bigl(\scrC_{k,m}(R)\cap v^\perp\bigr)
\geq F_{k,m-1}\biggl(
\Bigl(\tfrac{2w_2}{w_3}\Bigr)^{\frac{k+1}{n-1}}w_2^{-\frac1{n-1}}R\biggr)
\qquad(\forall R>0).
\end{align}
Recalling that $\frac23<w_3<1$,
and using the induction hypothesis that
\eqref{FkmestimatePROPres} 
is valid for $m-1$,
we obtain:
\begin{align}\label{lowerboundpf2}
p\bigl(\scrC_{k,m}(R)\cap v^\perp\bigr)\gg
\Bigl(w_2^{\frac k{n-1}}R+1\Bigr)^{-\frac{(n-1)(n-2)}k}\cdot
\Bigl(\log\Bigl(w_2^{\frac k{n-1}}R+2\Bigr)\Bigr)^a
\qquad(\forall R>0),
\end{align}
where $a=m-1$ if $k=1$, otherwise $a=0$.
Note that this is exactly the same bound as in \eqref{partD3disc1},
although it is now a bound \textit{from below,}
valid on the smaller set $\scrD_3'$.
By essentially the same computation as the one carried out below 
\eqref{partD3disc1} (but now working with bounds from below),
we conclude that for all $R$ sufficiently large,
\begin{align}\label{lowerboundpf1}
R^{-n}\int_{\scrD_3'}p\Bigl(\scrC_{k,m}(cR^{\frac n{n-1}}) \cap v^\perp\Bigr)\,dv
\gg R^{-\frac{n(n-1)}k}
\cdot
\begin{cases}
1&\text{if }\: k\geq2
\\
(\log R)^m &\text{if }\: k=1.
\end{cases}
\end{align}
It is also immediate from \eqref{lowerboundpf2} that
the left hand side of \eqref{lowerboundpf1} is $\gg1$ uniformly over all $R$ in any fixed bounded interval.
Using these facts in \eqref{FkmRlbfromProp2p9}
we conclude that $F_{k,m}(R)$ is bounded from below by the right hand side of
\eqref{FkmestimatePROPres}.
This concludes the proof of Proposition \ref{FkmestimatePROP}.
\hfill$\square$

\section{Extreme value laws}\label{secX}

We now show that the extreme value law stated in Theorem \ref{mainthm} is an immediate consequence of the limit theorems for hitting times, in particular Corollary \ref{cor3app} and the relevant tail asymptotics. The following theorem establishes the connection between the extreme value law and the hitting time distribution $\psi_0$, which holds in greater generality; cf. \cite{DolgopyatFayadLiu2022,FFT}. 

 \begin{thm}\label{mainthm123}
Let $\lambda$ be an admissible probability measure on $\scrX$.
Then, for any $Y\in\RR$, 
\begin{equation}\label{maineq0}
\lim_{T \to \infty} \lambda\left\{ x\in\scrX :  \sup_{|s|\leq T}  \log\alpha(h_s(x))  >  Y + \frac{k}{n}\, \log T  \right\} 
=  \int_Y^\infty \eta(y) dy ,
\end{equation}
where $\eta\in\L^1(\RR)$ is the probability density
\begin{equation}\label{equ:eta}
\eta(y) = \frac{n}{k}\,  \exp\bigg(-\frac{n}{k} y\bigg)\, \psi_0\left(\exp\bigg(-\frac{n}{k} y\bigg)\right),
\end{equation}
with $\psi_{0}\in \L^1(\RR_{>0})$ the probability density function given in \eqref{equ:probdens} with respect to sections $\scrH(\scrW,L)$ with $\scrW=\mathrm{pr}(\BB_1^{n,+})$. 
\end{thm}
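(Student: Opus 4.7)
My plan is to translate the extreme value event into a first-hitting-time event for a shrinking cuspidal neighbourhood and then invoke Corollary \ref{cor3app} with $\scrC=\BB_1^n$, the closed unit ball with respect to $\|\cdot\|$. The key dictionary is the following: by \eqref{equ:alpha}, for any $x=\Gamma g$ and $s\in\RR^k$, the inequality $\log\alpha(h_s(x))\geq Y+\frac{k}{n}\log T$ is equivalent to the existence of a nonzero $v\in\hatZZ^n g\,U(s)$ with $\|v\|\leq L^{-1}$, where $L:=e^Y T^{k/n}$; equivalently, $h_s(x)\in\scrH(L^{-1}\BB_1^n)$. Taking the supremum over $|s|\leq T$ identifies
\[
\widetilde A_T:=\left\{x\in\scrX:\sup_{|s|\leq T}\log\alpha(h_s(x))\geq Y+\tfrac{k}{n}\log T\right\}
\]
with the event that some orbit point $h_s(x)$ with $|s|\leq T$ lies in $\scrH(L^{-1}\BB_1^n)$.

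Next, I will identify $\widetilde A_T$ with $\{x:r_1(x,\scrH(L^{-1}\BB_1^n))\leq T\}$. The closed unit ball $\BB_1^n$ is compact, symmetric under reflection, with non-empty interior and boundary of Lebesgue measure zero, so the hypotheses of Corollary \ref{cor3app} are satisfied. By Lemma \ref{lem:hittingapp} applied to $(\BB_1^n)^+$ together with \eqref{equ:hitrela} (both of which use $\BB_1^n=-\BB_1^n$), the infimum $r_1(x,\scrH(L^{-1}\BB_1^n))$ is attained for every $x$ outside $\scrH(L^{-1}\BB_1^n\cap\{y_n=0\})$; for $x$ in this remaining set one has $r_1(x,\scrH(L^{-1}\BB_1^n))=0$, attained at $s=0$, since any $v\in\hatZZ^n g$ with $v_n=0$ is fixed by $U(s)$. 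Hence $\widetilde A_T=\{x:r_1(x,\scrH(L^{-1}\BB_1^n))\leq T\}$. Since $L^{-n/k}T=e^{-nY/k}$, Corollary \ref{cor3app} applied with $\scrC=\BB_1^n$ (and associated $\scrW=\mathrm{pr}((\BB_1^n)^+)$) yields
\[
\lim_{T\to\infty}\lambda(\widetilde A_T)=\int_0^{e^{-nY/k}}\psi_0(r)\,dr.
\]

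The event $A_T$ in \eqref{maineq0} uses strict inequality; to pass from $\geq$ to $>$, I will use the sandwich $\widetilde A_T^{Y+\delta}\subset A_T\subset \widetilde A_T^Y$ (where the superscript indicates the value of ``$Y$'' used in the definition of $\widetilde A_T$), valid for every $\delta>0$, together with continuity of $Y\mapsto\int_0^{e^{-nY/k}}\psi_0(r)\,dr$ (immediate from $\psi_0\in\L^1(\RR_{>0})$); letting $\delta\to 0^+$ gives the desired limit for $A_T$. Finally, the substitution $r=e^{-ny/k}$ (so $dr=-\tfrac{n}{k}e^{-ny/k}\,dy$) converts $\int_0^{e^{-nY/k}}\psi_0(r)\,dr$ into $\int_Y^\infty\eta(y)\,dy$ with $\eta$ as in \eqref{equ:eta}; integrability and the probability density property of $\eta$ on $\RR$ transfer from those of $\psi_0$ on $\RR_{>0}$ via the same change of variables. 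The only technical point throughout is the attainment issue dispatched in the second paragraph; everything else is a direct consequence of Corollary \ref{cor3app} together with elementary manipulations.
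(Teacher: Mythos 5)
Your proposal is correct and follows essentially the same route as the paper: reduce the extreme-value event to a first-hitting-time event for the shrinking cuspidal region $\scrH(L^{-1}\BB_1^n)$, apply Corollary \ref{cor3app} with $\scrC=\BB_1^n$, pass from $\geq$ to $>$ by continuity in $Y$, and change variables $r=e^{-ny/k}$ to obtain $\eta$. You are somewhat more explicit than the paper about the attainment of the infimum in $r_1$ (via Lemma \ref{lem:hittingapp} and \eqref{equ:hitrela}), but this is a difference of detail rather than of method.
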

\begin{proof}
Let $\scrC=\BB_1^n$. Note that for any $x\in \scrX$, 
\begin{align*}
\sup_{|s|\leq T}  \log\alpha(h_s(x))  <  \log L
\end{align*}
if and only if
\begin{align*}
r_1(x,\scrH(L^{-1}\scrC)) > T. 
\end{align*}
Hence, by Corollary \ref{cor3app} with $T=X L^{n/k}$, $X=\e^{-\frac{n}{k} Y}$ and $\scrW=\mathrm{pr}(\BB_1^{n,+})$ we have 
\begin{equation}\label{maineq00}
\lim_{T \to \infty} \lambda\left\{ x\in\scrX :  \sup_{|s|\leq T}  \log\alpha(h_s(x))  \geq   Y + \frac{k}{n}\, \log T  \right\} 
=  \int_0^{\exp(-\frac{n}{k} Y)} \psi_0(r) dr.
\end{equation}
The claim \eqref{maineq0} then follows by noting that the above right hand side is continuous in $Y$. 
\end{proof}
To complete the proof of Theorem \ref{mainthm}, note that the tail bounds \eqref{asytail0} and \eqref{asytail1} follow from \eqref{equ:genestsmallx} and \eqref{equ:genestlower} respectively by taking $\scrW=\mathrm{pr}(\BB_1^{n,+})$.

\subsection{Multidimensional logarithm laws}
\label{sec:exelog}
As promised in Remark \ref{rmk:loglaw}, we show in this section that  the logarithm law \eqref{uloglaw} follows from our extreme value laws as stated in Theorems \ref{mainthm} and \ref{mainthm123}.

\begin{cor}
For $\mu$-almost every $x\in \scrX$,
\begin{equation}\label{uloglaw1}
\limsup_{T\to\infty}\sup_{|s|\leq T}  \frac{\log\alpha(h_s(x))}{\log T}  = \frac{k}{n}.
\end{equation}
\end{cor}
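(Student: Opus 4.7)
The plan is to derive both inequalities $\limsup\leq k/n$ and $\limsup\geq k/n$ from Theorem~\ref{mainthm} at $\lambda=\mu$, via a Borel-Cantelli argument along the geometric sequence $T_j:=2^j$ together with the monotonicity of $M(T):=\sup_{|s|\leq T}\log\alpha(h_s(x))$, which interpolates a bound along $\{T_j\}$ to all $T$.

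The lower bound is the softer direction. Fixing $\epsilon>0$, I would set $A'_j:=\{M(T_j)>(k/n-\epsilon)\log T_j\}$ and apply Theorem~\ref{mainthm} at an auxiliary fixed $Y_0\in\RR$: once $j$ is large enough that $-\epsilon\log T_j\leq Y_0$, $A'_j$ contains the event $\{M(T_j)>Y_0+(k/n)\log T_j\}$, whose measure tends to $\int_{Y_0}^\infty\eta$. Letting $Y_0\to-\infty$ forces $\mu(A'_j)\to 1$, and then continuity of $\mu$ from above along $\bigcup_{j\geq n}A'_j$ (no summability needed) gives $\mu(\limsup_j A'_j)=1$. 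Letting $\epsilon\to 0$ through a countable sequence yields $\limsup_T M(T)/\log T\geq k/n$ $\mu$-a.s.

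The upper bound is the main obstacle: Theorem~\ref{mainthm} only gives pointwise-in-$Y$ convergence, but the Borel-Cantelli argument needs summable estimates for $A_j:=\{M(T_j)>(k/n+\epsilon)\log T_j\}$, and pointwise limits do not imply summability when $Y_j:=\epsilon\log T_j$ depends on $j$. To circumvent this, I would unfold $A_j$ via the definition of $\alpha$ as the lattice-incidence event $\{\Gamma g:\hatZZ^n g\cap\scrT_j\neq\emptyset\}$, where
\[
\scrT_j:=\bigcup_{|s|\leq T_j}\bigl\{w\in\RR^n:\|wU(s)\|<T_j^{-(k/n+\epsilon)}\bigr\},
\]
and bound $\mu(A_j)$ directly by Siegel's mean value formula \cite{Siegel1945}. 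The required volume estimate $\vol_{\RR^n}(\scrT_j)\ll T_j^{-n\epsilon}$ follows from the explicit parametrisation $wU(s)=(w'-w_n s,w'',w_n)$ together with norm comparison, giving the summable bound $\mu(A_j)\ll 2^{-jn\epsilon}$. This is the $T$-uniform Siegel-theoretic counterpart of the tail bound $\int_Y^\infty\eta\asymp e^{-nY}$ from \eqref{asytail0} evaluated at $Y=\epsilon\log T_j$, and uses essentially the same input that underlies Section~\ref{tailSEC}. The first Borel-Cantelli lemma then gives that only finitely many $A_j$ occur $\mu$-a.s., and the monotonicity $M(T)\leq M(T_{j+1})$ on $T\in[T_j,T_{j+1}]$ combined with $\log T_{j+1}/\log T_j\to 1$ propagates the bound to $\limsup_T M(T)/\log T\leq k/n+\epsilon$. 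Letting $\epsilon\to 0$ completes the proof.
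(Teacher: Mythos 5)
Your proof is correct, and both halves rest on the same two ingredients as the paper (Siegel's mean value formula plus Borel--Cantelli for the upper bound; convergence in probability to a.s.\ convergence for the lower bound), though the detailed decomposition for the upper bound differs. The paper first reduces to integer times using $\log\alpha(h_s(x))=\log\alpha(h_{\lfloor s\rfloor}(x))+O(1)$, and then for each $s\in\ZZ^k\setminus\{0\}$ applies Siegel to the $n$-dimensional ball $\BB^n_{r_s}$ with $r_s=|s|^{-(k/n+\epsilon/2)}$, giving $\mu(B_s)\ll|s|^{-k-n\epsilon/2}$ summable over $\ZZ^k$; the Haar $h_s$-invariance then lets Borel--Cantelli be applied. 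You instead avoid the time discretization entirely, work along the dyadic scales $T_j=2^j$, and apply Siegel directly to the union $\scrT_j=\bigcup_{|s|\leq T_j}\BB^n_{\rho_j}U(-s)$ whose volume is $\ll\rho_j^{m+1}(\rho_j T_j)^k=T_j^{-n\epsilon}$; this is the same aggregate count packaged in a single $n$-dimensional volume estimate. Both yield summable tails. One small point worth spelling out in your write-up: you should note that $\scrT_j$ is star-shaped about the origin (each $\BB^n_{\rho_j}U(-s)$ is convex and contains $0$), so that the event $\{M(T_j)>(k/n+\epsilon)\log T_j\}$ really is $\scrH(\scrT_j)$, i.e.\ detected at \emph{primitive} vectors, and Siegel's formula applies with the constant $\zeta(n)^{-1}$. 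For the lower bound you arrive at exactly the same place as the paper: the extreme value law gives convergence in probability of $\sup_{|s|\leq T}\log\alpha(h_s(x))/\log T$ to $k/n$, and your $\limsup$-event argument with $\mu(A'_j)\to1$ is just an explicit rendition of the standard fact that convergence in probability gives a.s.\ convergence along a subsequence.
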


\begin{proof}
The upper bound is standard and follows from the Borel-Cantelli lemma. For completeness we give a short proof here.  First note that it is equivalent to show for $\mu$-almost every $x\in \scrX$,
$$
\limsup_{|s|\to\infty}  \frac{\log\alpha(h_s(x))}{\log |s|}  \leq  \frac{k}{n}.
$$
Moreover,
$$
\log\alpha(h_s(x))=\log\alpha(h_{\left \lfloor{s}\right \rfloor}(x))+O(1), \quad \forall\, s\in \RR^k\ \text{and}\ x\in \scrX,
$$
where $\left \lfloor{s}\right \rfloor\in\ZZ^k$ denotes the unique integer point in the cube 
$s+(-\frac12,\frac12]^k$.
Using also the continuity of the measure $\mu$ on a decreasing sequence of sets, it suffices to show for every $\epsilon>0$, the following set 
\begin{align*}
\scrL_{\epsilon}:=\left\{x\in \scrX: \limsup_{\substack{s\in \ZZ^k\\ |s|\to\infty}}\frac{\log \alpha(h_s(x))}{\log |s|}\leq \frac{k}{n}+\epsilon\right\}
\end{align*}
has full $\mu$-measure, or equivalently, $\mu(\scrL_{\epsilon}^c)=0$. For each $s\in \ZZ^k\setminus\{0\}$, let 
$$
B_s:=\left\{x\in \scrX: \log\alpha(x)>(\tfrac{k}{n}+\tfrac{\epsilon}{2})\log |s|\right\}.
$$ 
Note that
$
\scrL_{\epsilon}^c\subset \limsup_{\substack{s\in \ZZ^k\\|s|\to\infty}}h_{-s}B_s$ and $B_s\subset \scrH(\BB_{r_s}^n)$ with $r_s=|s|^{-(\frac{k}{n}+\frac{\epsilon}{2})}$. Thus by Siegel's volume formula \cite{Siegel1945} we have
$$
\mu(B_s)\ll \vol_{\RR^n}(\BB_{r_s}^n)\asymp |s|^{-(k+\frac{n\epsilon}{2})},\quad \forall\, s\in \ZZ^k\setminus\{0\},
$$ 
implying that $\sum_{s\in \ZZ^k\setminus \{0\}}\mu(B_s) <\infty$. Then by the Borel-Cantelli lemma and $h_s$-invariance of the Haar measure we have $\mu(\limsup_{\substack{s\in \ZZ^k\\|s|\to\infty}}h_{-s}B_s)=0$, hence also $\mu(\scrL_{\epsilon}^c)=0$.


For the lower bound, we note that the extreme value law implies that
\begin{equation}
\lim_{T\to\infty}\sup_{|s|\leq T}  \frac{\log\alpha(h_s(x))}{\log T}  = \frac{k}{n}
\end{equation}
in probability, for $x$ distributed according to $\mu$.
 Hence, there exists a subsequence $\{T_j\}_j$ such that, for $\mu$-almost every $x$,
\begin{equation}
\lim_{j\to\infty}\sup_{|s|\leq T_j}  \frac{\log\alpha(h_s(x))}{\log T_j}  = \frac{k}{n} ,
\end{equation}
which yields an almost sure lower bound for the lim sup in \eqref{uloglaw1}.
\end{proof}

\subsection{Conjugates}\label{sec:conj}
The results in this paper extend to unipotent actions conjugate to $\{h_s\}_{s\in \RR^k}$. Indeed,   for any $g_0\in G$ let 
$$
h_s^{g_0}(x)=xg_0U(s)g_0^{-1} \qquad (x\in \scrX),
$$
with $U(s)$ as in \eqref{equ:unipotentact}. Define the corresponding sections by $\scrH^{g_0}(\scrW,L):=\scrH(\scrW,L)g_0^{-1}$ with $\scrH(\scrW,L)$ as in \eqref{sect}. Define also 
$$
\Sigma^{g_0}(\scrW,L):=\Sigma(\scrW,L)g_0^{-1}\quad\text{ and}\quad \varphi^{g_0}_t(x)
:=xg_0\Phi(t)g_0^{-1}
$$ 
with $\Sigma(\scrW,L)$ and $\Phi(t)$ as in \eqref{SigmaWLdef} and \eqref{equ:phit} respectively.  Define 
\begin{align}\label{def:hittimeconj}
\xi^{g_0}_j(x,L):=\xi_j(xg_0,L)\in \RR^k,\quad \omega_j^{g_0}(x,L):=\omega_j(xg_0, L)g_0^{-1}\in \Sigma^{g_0}(\scrW,1),\quad \forall\, x\in \scrX,\, j\geq 1,\, L>0.
\end{align}
Note that 
$$
h^{g_0}_s(x)\in \scrH^{g_0}(\scrW,L)\quad \Leftrightarrow\quad h_s(xg_0)\in \scrH(\scrW,L).
$$ 
From this equivalence relation one verifies that $(\xi^{g_0}_j(x,L),\omega_j^{g_0}(x,L))_j$ indeed give all the hitting times  and impact parameters for which 
$$
h^{g_0}_{\xi_j^{g_0}(x,L)}(x)=\varphi^{g_0}_{\log L^{1/k}}(\Gamma \omega_j^{g_0}(x,L))
\in\scrH^{g_0}(\scrW,L),
$$ 
 counted with the correct multiplicity. 

Now by \eqref{def:hittimeconj} we can extend our previous results on hitting time and impact statistics for $\{h_s\}_{s\in \RR^k}$ to similar results for $\{h_s^{g_0}\}_{s\in \RR^k}$. 
To state these results we also need to introduce the following more general notion of admissibility for measures. 
Let $\lambda$ be a Borel probability measure on $\scrX$. We say $\lambda$ is  \textit{admissible with respect to $\varphi_t^{g_0}$} if
$$
\lim_{t\to\infty} \lambda\left(\varphi^{g_0}_t\scrE\right)  = \mu\left(\scrE\right)
$$
holds for every Borel subset $\scrE\subset\scrX$ with $\mu(\partial\scrE)=0$. 
If $\varphi_t^{g_0}=\varphi_t$, this notion agrees with the  admissible measures defined in Definition \ref{def:admi}.
In what follows, we always assume $\lambda$ to be admissible with respect to $\varphi_t^{g_0}$. 
Note that $(g_0)_*\lambda$ is admissible in the sense of Definition \ref{def:admi}, where $(g_0)_*\lambda$ 
is the pushforward measure of $\lambda$ under the right action of $g_0$ on $\scrX$. Indeed, note that $\mu(\partial\scrE)=0$ if and only if $\mu(\partial(\scrE g_0^{-1}))=0$; thus if $\mu(\partial\scrE)=0$, the admissibility assumption on $\lambda$  implies that
$$
\lim_{t\to\infty}(g_0)_*\lambda(\varphi_t\scrE)=\lim_{t\to\infty}\lambda(\scrE\Phi(t)g_0^{-1})=\lim_{t\to\infty}\lambda(\varphi_t^{g_0}(\scrE g_0^{-1}))=\mu(\scrE g_0^{-1})=\mu(\scrE).
$$

The following theorem is the analogue of Theorem \ref{thm2var} and Corollary \ref{cor1} for $\{h_s^{g_0}\}_{s\in \RR^k}$. 

\begin{thm}\label{thm1genconj}
Let $\scrA=\scrA_1\times\cdots\times\scrA_r$ 
where $\scrA_1,\ldots,\scrA_r$ are bounded Borel subsets of $\RR^k$ with
$\vol_{\RR^k}(\partial \scrA_i)=0$, and let $N\in\ZZ_{\geq 0}^r$.
Then
\begin{align*}
\lim_{L\to\infty}
\lambda\left(\left\{ x\in\scrX : \#\left\{ j  : L^{-n/k} \xi^{g_0}_j(x,L)\in\scrA_i \right\}  = N_i\, \forall i\right\}\right)  
= \Psi^{(r)}_N(\scrA) ,
\end{align*}
with $\Psi^{(r)}_N(\scrA)$ as in \eqref{LD019}.
In particular, taking $r=1$, $N=0$ and $\scrA_1=\B_X^k$ we get
\begin{align*}
\lim_{L\to\infty}
\lambda\left(\left\{ x\in\scrX : L^{-n/k} |\xi_1(x,L)|>X \right\}\right)  =  \int_X^\infty \psi_0(r)\, dr,
\end{align*}
with the probability density $\psi_0\in\L^1(\RR_{>0})$ defined by
\begin{align*}
\int_X^\infty \psi_0(r)\, dr = \Psi_0^{(1)}(\B_X^k). 
\end{align*}
\end{thm}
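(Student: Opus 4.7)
The plan is to reduce this statement to Theorem \ref{thm2var} applied to the pushforward measure $(g_0)_*\lambda$. The crucial observation, already established in the paragraph preceding the theorem, is that $(g_0)_*\lambda$ is admissible in the sense of Definition \ref{def:admi} whenever $\lambda$ is admissible with respect to $\varphi_t^{g_0}$; hence, once the event in question is rewritten so as to refer to the unconjugated hitting times $\xi_j(\,\cdot\,, L)$ but under the measure $(g_0)_*\lambda$, the conclusion is immediate.

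Concretely, I would introduce the set
$$
\scrE_L := \left\{ y \in \scrX : \#\left\{ j : L^{-n/k}\,\xi_j(y, L) \in \scrA_i \right\} = N_i \ \forall i \right\},
$$
which is exactly the set to which Theorem \ref{thm2var} applies. Using the definition $\xi_j^{g_0}(x, L) = \xi_j(xg_0, L)$ from \eqref{def:hittimeconj}, the event appearing in Theorem \ref{thm1genconj} coincides with $\{x \in \scrX : xg_0 \in \scrE_L\} = \scrE_L\, g_0^{-1}$, and its $\lambda$-measure equals $(g_0)_*\lambda(\scrE_L)$ by definition of the pushforward under the right-multiplication map $x \mapsto xg_0$.

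Applying Theorem \ref{thm2var} to the admissible measure $(g_0)_*\lambda$ with the same test sets $\scrA$ and counts $N$ then gives
$$
\lim_{L \to \infty} (g_0)_*\lambda(\scrE_L) = \Psi^{(r)}_N(\scrA),
$$
which is the first assertion. The second assertion is the specialization $r = 1$, $N_1 = 0$, $\scrA_1 = \B_X^k$, with $\psi_0$ the same $\L^1$ density produced in Corollary \ref{cor1}. There is essentially no obstacle in the argument: the conjugation by $g_0$ is absorbed entirely into the change of measure, and the hypothesis that $\lambda$ be admissible with respect to $\varphi_t^{g_0}$ has been set up precisely so that this change of measure lands within the class to which Theorem \ref{thm2var} applies. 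The only minor point is the set-theoretic identity $\{x : xg_0 \in \scrE_L\} = \scrE_L g_0^{-1}$, which is immediate.
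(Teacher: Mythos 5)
Your proposal is correct and follows exactly the paper's own argument: rewrite the event via $\xi_j^{g_0}(x,L)=\xi_j(xg_0,L)$ as $\scrE_L g_0^{-1}$, note $\lambda(\scrE_L g_0^{-1})=(g_0)_*\lambda(\scrE_L)$, and apply Theorem \ref{thm2var} to the admissible measure $(g_0)_*\lambda$. Nothing to add.
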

\begin{proof}
By \eqref{def:hittimeconj} we have
\begin{align*}
\left\{ x\in\scrX : \#\left\{ j  : L^{-n/k} \xi^{g_0}_j(x,L)\in\scrA_i \right\}  = N_i\, \forall i\right\}&=\left\{ y\in\scrX : \#\left\{ j  : L^{-n/k} \xi_j(y,L)\in\scrA_i \right\}  = N_i\, \forall i\right\}g_0^{-1}.
\end{align*}
Since $(g_0)_*\lambda$ is admissible with respect to $\varphi_t$, considering $\lim_{L\to\infty}\lambda(\cdots)$ for the above sets and applying Theorem \ref{thm2var} we get the desired result.  The in particular part follows from the same arguments as in the proof of Corollary \ref{cor1}. 
\end{proof}
Similarly, we also have the following counterparts of Theorems \ref{thm3} and \ref{cor1B} for $\{h_s^{g_0}\}_{s\in \RR^k}$.
\begin{thm}\label{thm3gen}
For $\scrA=\scrA_1\times\cdots\times\scrA_r\subset(\RR^k)^r$ bounded with $\vol_{\RR^k}(\partial \scrA_i)=0$,  $\scrD=\scrD_1\times\cdots\times\scrD_r\subset\Sigma^{g_0}(\scrW,1)^r$ with each $\scrD_i g_0$ having compact closure in $HQ$ and $\nu(\partial(\scrD_i g_0))=0$, and $N\in\ZZ_{\geq 0}^r$,
\begin{align*}
\lim_{L\to\infty}
\lambda\left(\left\{ x\in\scrX : \#\left\{ j : L^{-n/k} \xi^{g_0}_j(x,L) \in\scrA_i ,  \; \omega^{g_0}_j(x,L)\in\scrD_i \right\}  = N_i\, \forall i\right\}\right) = \Psi^{(r)}_N(\scrA,\scrD g_0) ,
\end{align*}
with $ \Psi^{(r)}_N$ as in \eqref{LD01910} and $\scrD g_0:=\scrD_1g_0\times \cdots \times \scrD_r g_0\subset \Sigma(\scrW, 1)^r$. 
\end{thm}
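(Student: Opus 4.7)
The plan is to reduce this to Theorem \ref{thm3} by the same translation argument that was used to deduce Theorem \ref{thm1genconj} from Theorem \ref{thm2var}. The key observation is that the conjugation on $\scrX$ by right multiplication with $g_0$ intertwines the two pictures: the definition \eqref{def:hittimeconj} immediately yields
\begin{equation*}
\bigl\{x\in\scrX : \#\{j : L^{-n/k}\xi_j^{g_0}(x,L)\in\scrA_i,\, \omega_j^{g_0}(x,L)\in\scrD_i\} = N_i\,\forall i\bigr\}
=\scrE_L\, g_0^{-1},
\end{equation*}
where
\begin{equation*}
\scrE_L:=\bigl\{y\in\scrX : \#\{j : L^{-n/k}\xi_j(y,L)\in\scrA_i,\, \omega_j(y,L)\in\scrD_i g_0\} = N_i\,\forall i\bigr\}.
\end{equation*}
Indeed, $\omega_j^{g_0}(x,L) = \omega_j(xg_0,L)g_0^{-1}$, so $\omega_j^{g_0}(x,L)\in\scrD_i$ if and only if $\omega_j(xg_0,L)\in\scrD_i g_0$, and analogously for the hitting times.

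Next I would apply $\lambda$ to both sides and rewrite $\lambda(\scrE_L g_0^{-1}) = ((g_0)_*\lambda)(\scrE_L)$. Recall from the discussion preceding the theorem that if $\lambda$ is admissible with respect to $\varphi_t^{g_0}$ then $(g_0)_*\lambda$ is admissible with respect to $\varphi_t$ in the sense of Definition \ref{def:admi}. The hypotheses on $\scrD$ were precisely tailored so that Theorem \ref{thm3} applies to $\scrD g_0 = \scrD_1 g_0\times\cdots\times \scrD_r g_0 \subset \Sigma(\scrW,1)^r$: each factor $\scrD_i g_0$ has compact closure in $HQ$ and $\nu(\partial(\scrD_i g_0))=0$ by assumption.

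Therefore Theorem \ref{thm3} (applied to the admissible measure $(g_0)_*\lambda$, the sets $\scrA_i$, and the sets $\scrD_i g_0$) gives
\begin{equation*}
\lim_{L\to\infty}((g_0)_*\lambda)(\scrE_L) = \Psi^{(r)}_N(\scrA,\scrD g_0),
\end{equation*}
which is exactly the claimed limit. There is no serious obstacle here; the only things to check are the two bookkeeping points already verified above (translation of $\omega_j^{g_0}$ versus $\omega_j$ by right multiplication by $g_0$, and admissibility of $(g_0)_*\lambda$), and neither requires new input beyond what appears in the preceding discussion.
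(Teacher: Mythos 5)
Your proof is correct and takes essentially the same route the paper intends: the paper states that Theorem \ref{thm3gen} is a ``counterpart'' of Theorem \ref{thm3}, to be proved by the same translation argument as in the proof of Theorem \ref{thm1genconj}, and you have carried out exactly that argument, correctly translating both the hitting times $\xi_j^{g_0}$ and the impact parameters $\omega_j^{g_0}$ via \eqref{def:hittimeconj} and invoking the admissibility of $(g_0)_*\lambda$.
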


\begin{thm}\label{thm:conj}
Let $\scrD=\scrF_H\left\{R(w): w\in \scrB\right\}g_0^{-1}\subset\Sigma^{g_0}(\scrW,1)$ with $\scrB\subset \scrW$ Borel and satisfying $\vol_{\RR^{m+1}}(\partial\scrB)=0$.
Then for any $X>0$, 
\begin{align*}
\lim_{L\to\infty}
\lambda\left(\left\{ x\in\scrX : L^{-n/k} |\xi^{g_0}_1(x,L)|> X ,  \; \omega^{g_0}_1(x,L)\in\scrD \right\}\right)  = \int_{(\B_X^k)^c} \int_{\scrD g_0} \psi(\xi,\omega)\,  d\nu(\omega)\, d\xi,
\end{align*}
with the $\L^1$ probability density $\psi(\xi,\omega)$ as in \eqref{densi}. 
\end{thm}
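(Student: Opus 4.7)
The plan is to reduce Theorem \ref{thm:conj} to Theorem \ref{cor1B} by the same pullback trick used in the proof of Theorem \ref{thm1genconj}. Specifically, the definitions in \eqref{def:hittimeconj} give
\begin{align*}
\bigl\{ x\in\scrX : L^{-n/k} |\xi^{g_0}_1(x,L)|> X,\, \omega^{g_0}_1(x,L)\in\scrD \bigr\}
= \bigl\{ y\in\scrX : L^{-n/k} |\xi_1(y,L)|> X,\, \omega_1(y,L)\in\scrD g_0 \bigr\}\,g_0^{-1},
\end{align*}
so applying $\lambda$ to the left-hand side equals applying $(g_0)_*\lambda$ to the right-hand side. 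As noted in the discussion preceding Theorem \ref{thm1genconj}, the admissibility of $\lambda$ with respect to $\varphi_t^{g_0}$ is equivalent to admissibility of $(g_0)_*\lambda$ in the sense of Definition \ref{def:admi}, so Theorem \ref{cor1B} becomes applicable once we verify that $\scrD g_0$ has the required form.

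First I would check that $\scrD g_0$ fits the hypothesis of Theorem \ref{cor1B}. By the definition of $\scrD$, we have $\scrD g_0 = \scrF_H\{R(w):w\in\scrB\}\subset\Sigma(\scrW,1)$, which is exactly the type of set allowed in Theorem \ref{cor1B}, with the same Borel set $\scrB\subset\scrW$ satisfying $\vol_{\RR^{m+1}}(\partial\scrB)=0$. Next I would apply Theorem \ref{cor1B} to $(g_0)_*\lambda$ with the set $\scrD g_0$ in place of $\scrD$, obtaining
\begin{align*}
\lim_{L\to\infty}(g_0)_*\lambda\bigl(\bigl\{ y\in\scrX : L^{-n/k} |\xi_1(y,L)|> X,\, \omega_1(y,L)\in\scrD g_0 \bigr\}\bigr)
= \int_{(\B_X^k)^c}\int_{\scrD g_0}\psi(\xi,\omega)\,d\nu(\omega)\,d\xi,
\end{align*}
which is precisely the claimed identity.

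There is essentially no obstacle beyond bookkeeping here. The only point that requires care is confirming that the admissibility of $\lambda$ with respect to $\varphi_t^{g_0}$ transfers to admissibility of $(g_0)_*\lambda$ with respect to $\varphi_t$, but this is already established in the paragraph preceding Theorem \ref{thm1genconj}, using the fact that $\mu(\partial\scrE)=0$ if and only if $\mu(\partial(\scrE g_0^{-1}))=0$ together with the intertwining $\varphi_t^{g_0}(\scrE g_0^{-1})=\scrE \Phi(t) g_0^{-1}$. With these observations in place, the proof of Theorem \ref{thm:conj} is a direct consequence of Theorem \ref{cor1B}, in exact parallel with how Theorem \ref{thm1genconj} was deduced from Theorem \ref{thm2var} and Corollary \ref{cor1}.
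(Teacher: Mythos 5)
Your proof is correct and takes essentially the same approach the paper intends: translate the conjugated hitting-time/impact event into one for $h_s$ via $\xi^{g_0}_1(x,L)=\xi_1(xg_0,L)$, $\omega^{g_0}_1(x,L)=\omega_1(xg_0,L)g_0^{-1}$, pass to $(g_0)_*\lambda$, confirm it is admissible, note $\scrD g_0=\scrF_H\{R(w):w\in\scrB\}$ has the required form, and invoke Theorem \ref{cor1B}. This is precisely the reduction the paper asserts (``Similarly, we also have the following counterparts of Theorems \ref{thm3} and \ref{cor1B}'') and it parallels the explicit proof given for Theorem \ref{thm1genconj}.
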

Next, we can also study the hitting times of $\{h_s^{g_0}\}_{s\in \RR^k}$ to a family of cuspidal neighbourhoods. 
Let $\mathcal{C}\subset \RR^n$ be a compact set with non-empty interior. For any $x\in \scrX$, define 
\begin{align*}
r^{g_0}_1(x, \scrH(\scrC)):=\inf\,\left\{|s|: h^{g_0}_s(x)\in \scrH(\scrC)\right\},
\end{align*}
where $\scrH(\scrC)$ is as in \eqref{equ:hitset}. 
Using the conjugation relation between $h_s$ and $h_s^{g_0}$ one easily verifies that 
\begin{align*}
r_1^{g_0}(x,\scrH(\scrC))=r_1(xg_0, \scrH(\scrC g_0) )\qquad (x\in \scrX),
\end{align*}
where $r_1(xg_0, \scrH(\scrC g_0) )$ is as in \eqref{def:hittime}. 
With this relation and similar arguments as in the proof of Theorem \ref{thm1genconj} we have the following generalisation of Corollary \ref{cor3app} to $\{h_s^{g_0}\}_{s\in \RR^k}$. 
\begin{cor}\label{thm3appgen}
Let $\mathcal{C}\subset \RR^n$ be a compact set with non-empty interior and boundary of Lebesgue measure zero,
satisfying $\scrC=-\scrC$. 
Then for any $X>0$, 
\begin{align*}
\lim_{L\to\infty}
\lambda\left(\left\{ x\in\scrX : L^{-n/k} r^{g_0}_1(x,\scrH(L^{-1}\mathcal{C}))>X \right\}\right)  =  \int_X^\infty \psi_0(r)\, dr,
\end{align*}
with the probability density $\psi_0\in\L^1(\RR_{>0})$  as in Corollary \ref{cor1} and  with $\scrW=\mathrm{pr}(\mathcal{C}^+g_0)$. 
\end{cor}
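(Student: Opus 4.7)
The plan is to reduce the statement directly to Corollary \ref{cor3app} via the same conjugation argument used in the proof of Theorem \ref{thm1genconj}. The starting point is the identity
\begin{equation*}
r_1^{g_0}(x,\scrH(L^{-1}\mathcal{C}))=r_1(xg_0,\scrH(L^{-1}\mathcal{C} g_0)),\qquad x\in\scrX,\:L>0,
\end{equation*}
noted explicitly just preceding the corollary. This follows from the definition $h_s^{g_0}(x)=xg_0U(s)g_0^{-1}$ together with the evident identity $\scrH(\scrB)g_0^{-1}=\scrH(\scrB g_0^{-1})$, valid for every Borel $\scrB\subset\RR^n$ (a one-line verification using that $\hat\ZZ^n(g g_0^{-1})\cap \scrB \neq\emptyset$ iff $\hat\ZZ^n g\cap \scrB g_0\neq\emptyset$). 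Inserting this identity into the probability in the corollary and changing variables $y=xg_0$, the event becomes
\begin{equation*}
(g_0)_*\lambda\left(\left\{y\in\scrX : L^{-n/k}r_1(y,\scrH(L^{-1}\scrC g_0))>X\right\}\right),
\end{equation*}
where $(g_0)_*\lambda$ denotes the pushforward of $\lambda$ under the right action of $g_0$.

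To apply Corollary \ref{cor3app} to this expression, two ingredients need checking. First, $(g_0)_*\lambda$ is admissible in the sense of Definition \ref{def:admi}; this is exactly the computation presented in the text just above Theorem \ref{thm1genconj}, which uses the admissibility of $\lambda$ with respect to $\varphi_t^{g_0}$ together with the intertwining $\varphi_t^{g_0}(\scrE g_0^{-1})=(\varphi_t\scrE)g_0^{-1}$. Second, $\scrC g_0$ satisfies all the hypotheses required of the test region: compactness and non-empty interior follow since $g_0\in G$ acts as a homeomorphism on $\RR^n$, the boundary still has Lebesgue measure zero because $g_0\in\SL(n,\RR)$ preserves Lebesgue measure, and $\scrC g_0=-\scrC g_0$ is immediate from $\scrC=-\scrC$.

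With both ingredients in place, Corollary \ref{cor3app} applied to $(g_0)_*\lambda$ and the set $\scrC g_0$ yields the claim, with density $\psi_0$ from Corollary \ref{cor1} corresponding to $\scrW=\mathrm{pr}((\scrC g_0)^+)$ -- which is what the statement denotes $\mathrm{pr}(\scrC^+ g_0)$. I do not anticipate any genuine obstacle: the argument is essentially a transport of structure along right multiplication by $g_0$, and the only points warranting brief attention are (i) the conjugation identity for $r_1^{g_0}$ versus $r_1$ and (ii) the stability of all regularity hypotheses on $\scrC$ and $\lambda$ under that right action. Everything else is quoted verbatim from Corollary \ref{cor3app}.
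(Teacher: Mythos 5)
Your proof is correct and is exactly the argument the paper sketches (the conjugation identity $r_1^{g_0}(x,\scrH(\scrB))=r_1(xg_0,\scrH(\scrB g_0))$, pushforward to $(g_0)_*\lambda$, and direct appeal to Corollary \ref{cor3app} for the set $\scrC g_0$). One small point worth being explicit about: applying Corollary \ref{cor3app} to $\scrC g_0$ literally yields $\scrW=\mathrm{pr}\bigl((\scrC g_0)^+\bigr)=\mathrm{pr}\bigl(\scrC g_0\cap\{y_n>0\}\bigr)$, which is indeed a subset of $\RR^m\times\RR_{>0}$ as the framework requires; this is not literally the same set as $\mathrm{pr}(\scrC^+ g_0)$ (which may fail to lie in $\RR^m\times\RR_{>0}$), but using the central symmetry $\scrC g_0=-\scrC g_0$ one checks that $\scrW\cup(-\scrW)$ agrees for the two choices up to a Lebesgue null set, so they define the same density $\psi_0$ — so your identification with the paper's notation is justified, though it merits a sentence rather than an appeal to notation.
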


Now using the same arguments as in the proof of Theorem \ref{mainthm123}, but with Theorem \ref{thm3appgen} in place of Corollary \ref{cor3app} we obtain the following extreme value law for $\{h_s^{g_0}\}_{s\in \RR}$. 

 \begin{thm}\label{mainthm123gen}
For any $Y\in\RR$, 
\begin{align*}
\lim_{T \to \infty} \lambda\left\{ x\in\scrX :  \sup_{|s|\leq T}  \log\alpha(h^{g_0}_s(x))  >  Y + \frac{k}{n}\, \log T  \right\} 
=  \int_Y^\infty \eta(y) dy ,
\end{align*}
where $\eta\in\L^1(\RR)$ is the probability density
\begin{align*}
\eta(y) = \frac{n}{k}\,  \exp\bigg(-\frac{n}{k} y\bigg)\, \psi_0\left(\exp\bigg(-\frac{n}{k} y\bigg)\right),
\end{align*}
with $\psi_{0}\in \L^1(\RR_{>0})$ the probability density function given in \eqref{equ:probdens} with respect to sections $\scrH(\scrW,L)$ with $\scrW=\mathrm{pr}(\BB_1^{n,+}g_0)$. 
\end{thm}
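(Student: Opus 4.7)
The plan is to follow the same strategy as the proof of Theorem \ref{mainthm123}, now applied to the conjugated action. The key observation is an equivalence between the extreme value condition and a hitting time condition. Specifically, for $x\in \scrX$ and $L,T>0$, writing $\scrC=\BB_1^n$, one has
$$\sup_{|s|\leq T}\log\alpha(h^{g_0}_s(x))<\log L \quad\Longleftrightarrow\quad r^{g_0}_1(x,\scrH(L^{-1}\scrC))>T.$$
This is because $\alpha(h^{g_0}_s(x))\geq L$ is equivalent to the lattice corresponding to $h^{g_0}_s(x)$ containing a nonzero vector of norm $\leq L^{-1}$, which is exactly the condition that $h^{g_0}_s(x)\in \scrH(L^{-1}\scrC)$. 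The same equivalence was used (unstated but implicit) in the proof of Theorem \ref{mainthm123}.

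Next, I set $T=XL^{n/k}$ with $X=e^{-(n/k)Y}$, so that $\log L= Y+(k/n)\log T$. With this choice, the set
$\{x\in\scrX:\sup_{|s|\leq T}\log\alpha(h^{g_0}_s(x))\geq Y+(k/n)\log T\}$
becomes $\{x\in\scrX : L^{-n/k} r^{g_0}_1(x,\scrH(L^{-1}\scrC))\leq X\}$.
Since $\scrC=\BB_1^n$ is compact, symmetric ($\scrC=-\scrC$), with non-empty interior and boundary of Lebesgue measure zero, Corollary \ref{thm3appgen} applies. Letting $T\to\infty$ (equivalently $L\to\infty$), it yields
$$\lim_{T\to\infty}\lambda\bigl\{ x:\sup_{|s|\leq T}\log\alpha(h^{g_0}_s(x))\geq Y+(k/n)\log T\bigr\} = \int_0^{\exp(-(n/k)Y)}\psi_0(r)\,dr,$$
with $\psi_0$ the density in Corollary \ref{cor1} corresponding to the section $\scrH(\scrW,L)$ with $\scrW=\mathrm{pr}(\scrC^+ g_0)=\mathrm{pr}(\BB_1^{n,+}g_0)$, which matches the $\scrW$ specified in the statement.

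Finally, the substitution $r=\exp(-(n/k)y)$ (with $dr=-(n/k)\exp(-(n/k)y)\,dy$) transforms the integral $\int_0^{\exp(-(n/k)Y)}\psi_0(r)\,dr$ into $\int_Y^{\infty}\eta(y)\,dy$, where $\eta$ is the density in the statement. The right-hand side is continuous in $Y$, so the inequality $\geq$ can be replaced by $>$ in the limit without changing the value, giving \eqref{maineq0} for the conjugated flow. The main obstacle, already cleared by Corollary \ref{thm3appgen}, is that for $k<n-1$ the first hitting time for the section $\scrH(\scrW,L)$ differs from the first entry time to the cuspidal neighbourhood $\scrH(L^{-1}\scrC)$; once the cuspidal entry time statistics are in hand, the extreme value law is an immediate change of variables.
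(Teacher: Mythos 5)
Your proposal is correct and follows essentially the same route as the paper: the paper proves Theorem \ref{mainthm123gen} in a single sentence by declaring that one repeats the proof of Theorem \ref{mainthm123} with Corollary \ref{thm3appgen} in place of Corollary \ref{cor3app}, and you have carried out exactly that substitution, filling in the equivalence between the extreme-value event and the cuspidal first-hitting-time event, the reparametrisation $T=XL^{n/k}$, $X=e^{-(n/k)Y}$, the change of variables $r=e^{-(n/k)y}$ that produces $\eta$, and the continuity argument passing from $\geq$ to $>$.
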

Finally,  since $\BB_1^ng_0$ is the unit ball with respect to the norm $\|v\|_{g_0}:=\|vg_0^{-1}\|$ ($v\in \RR^n$), up to changing the bounding constants the probability density function $\eta$ here also satisfies the asymptotic estimates \eqref{asytail0} and \eqref{asytail1} with the leading coefficient in \eqref{asytail0} replaced by   
\begin{align*}
\kappa=\frac{\vol_{\RR^k}(\B_1^k)}{\zeta(n)}\int_{\mathrm{pr}(\BB^{n,+}_1g_0)}y_n^k\,dy_{k+1}\cdots dy_n.
\end{align*}

\section*{Data access statement}
No new data were generated or analysed during this study.


\begin{thebibliography}{99}

%


\bibitem{Athreya12}
J. S. Athreya and Y. Cheung, A Poincar\'e section for horocycle flow on the space of lattices, 
Int. Math. Res. Not. IMRN (2014), no.10, 2643--2690.
%
\bibitem{AthreyaMargulis} J. S. Athreya and G. A. Margulis,  Logarithm laws for unipotent flows, I, J. Mod. Dyn., 2009, 3(3): 359--378.

\bibitem{AthreyaMargulis2} J. S. Athreya and G. A. Margulis,  
Logarithm laws for unipotent flows, II, J. Mod. Dyn. 11 (2017), 1--16.

\bibitem{BenardHeZhang2024}
T. Bénard, W. He, and H. Zhang,
Khintchine dichotomy for self-similar measures. arXiv preprint arXiv:2409.08061.

\bibitem{BG2023}
M. Björklund and A. Gorodnik, 
Poisson approximation and Weibull asymptotics in the geometry of numbers,
Trans. Amer. Math. Soc. 376 (2023), no. 3, 2155–2180.

\bibitem{Datta24}
S. Datta and S. Jana, On Fourier asymptotics and effective equidistribution, arXiv:2407.11961

\bibitem{DolgopyatFayad2020}
D. Dolgopyat and B. Fayad, 
Deviations of ergodic sums for toral translations II. Boxes,
Publ. Math. Inst. Hautes Études Sci. 132 (2020), 293–352.

\bibitem{DolgopyatFayadLiu2022}
D. Dolgopyat, B. Fayad and S. Liu, 
Multiple Borel-Cantelli lemma in dynamics and multilog law for recurrence,
J. Mod. Dyn. 18 (2022), 209–289.

\bibitem{ElBaz}
D. El-Baz, Spherical equidistribution in adelic lattices and applications,
arXiv preprint arXiv:1710.07944.

\bibitem{FFT}
A. Freitas, J. Freitas and M. Todd, 
Hitting time statistics and extreme value theory,
Probab. Theory Related Fields 147 (2010), no. 3-4, 675--710.

\bibitem{Gustav24}
G. Hammarhjelm, A. Str\"ombergsson and S. Yu,
Asymptotic estimates of large gaps between directions in certain planar quasicrystals,
J. Lond. Math. Soc. (2) 111 (2025), no. 6, Paper No. e70187.

\bibitem{Khalil23}
O. Khalil and M. Luethi, 
 Random walks, spectral gaps, and Khintchine's theorem on fractals,
Invent. Math. 232 (2023), no. 2, 713--831.

\bibitem{Khalil25}
O. Khalil, M. Luethi and B. Weiss, Measure rigidity and equidistribution for fractal carpets, arXiv:2502.19552.

\bibitem{Kelmer12}
D. Kelmer and A. Mohammadi, 
Logarithm laws for one parameter unipotent flows, Geom. Funct. Anal. 22 (2012), no.3, 756--784.

\bibitem{Kelmer19}
D. Kelmer and S. Yu, Shrinking target problems for flows on homogeneous spaces,
Trans. Amer. Math. Soc. 372 (2019), no. 9, 6283--6314. 
%
\bibitem{Kirsebom} 
M. Kirsebom and K. Mallahi-Karai, On the extreme value law for the unipotent  flow on $\SL_2(\mathbb R)/\SL_2(\mathbb Z)$, arXiv:2209.07283v1.

\bibitem{Marklof10}
J. Marklof, The asymptotic distribution of Frobenius numbers, Invent. Math. 181 (2010) 179--207.

\bibitem{Marklof13}
J. Marklof, Fine-scale statistics for the multidimensional Farey sequence, in: Limit Theorems in Probability, Statistics and Number Theory, Springer Proceedings in Mathematics \& Statistics 42, 2013, pp. 49--57.


\bibitem{smalld1}
J. Marklof, Smallest denominators, Bull. Lond. Math. Soc. 56 (2024), 1920--1938.

\bibitem{smalld2}
J. Marklof, The log moments of smallest denominators, Integers 24 (2024), A55, 13pp.
%

\bibitem{MP25}
J. Marklof and M. Pollicott, Extreme events for horocycle flows, Nonlinearity 38 (2025), no. 5, Paper No. 055003.

\bibitem{MS10}
J. Marklof and A. Str\"ombergsson, The distribution of free path lengths in the periodic Lorentz gas and related lattice point problems,
Ann. of Math. (2) 172 (2010), no. 3, 1949--2033.

\bibitem{MS11}
J. Marklof and A. Str\"ombergsson, The periodic Lorentz gas in the Boltzmann-Grad limit: asymptotic estimates,
Geom. Funct. Anal. 21 (2011), no. 3, 560--647.

\bibitem{MS14}
J. Marklof and A. Str\"ombergsson, Free path lengths in quasicrystals,
Comm. Math. Phys. 330 (2014), no. 2, 723--755.

\bibitem{MS15}
J. Marklof and A. Str\"ombergsson, Visibility and directions in quasicrystals,
Int. Math. Res. Not. IMRN 2015, no. 15, 6588--6617.


\bibitem{Pollicott2009}
M. Pollicott, 
Limiting distributions for geodesics excursions on the modular surface. Spectral analysis in geometry and number theory, 177–185.
Contemp. Math., 484
American Mathematical Society, Providence, RI, 2009,  p. 177–185.



\bibitem{Rogers1955}
C. A. Rogers, 
Mean values over the space of lattices.
Acta Math. 94 (1955), 249–287.

\bibitem{Shah24}
N. Shah and P. Yang, Equidistribution of expanding degenerate manifolds in the space of lattices,
Proc. Lond. Math. Soc. (3) 129 (2024), no. 4, Paper No. e12634, 53 pp.


\bibitem{Siegel1945}
C. L. Siegel, 
A mean value theorem in geometry of numbers.
Ann. of Math. (2) 46 (1945), 340–347.

\bibitem{Strombergsson11}
A. Str\"ombergsson, On the probability of a random lattice avoiding a large convex set, Proc. London Math. Soc. 103 (2011), 950--1006.

\bibitem{Tseng23}
J. Tseng,  Shrinking target horospherical equidistribution via translated Farey sequences,
Adv. Math. 432 (2023), Paper No. 109255, 85 pp.

\bibitem{Yu17}
S. Yu,
Logarithm laws for unipotent flows on hyperbolic manifolds, J. Mod. Dyn. 11 (2017), 447--476.
%
\end{thebibliography}
\end{document}